\documentclass[12pt,reqno]{amsproc}
\usepackage{times}

\usepackage{amsmath,amsthm,amssymb,wasysym,mathtools}
\usepackage{float}

\usepackage[inline]{enumitem}
\usepackage{caption}
\usepackage{subcaption}
\usepackage{graphicx}

\DeclareMathOperator{\RE}{Re}

\numberwithin{equation}{section}

\usepackage{amsthm}
\usepackage{tikz}

\usepackage{pgfplots}
\pgfplotsset{compat=1.18} 

\newtheoremstyle{fancytheorem}               
{.5\baselineskip±.2\baselineskip}           
{.5\baselineskip±.2\baselineskip}           
{\itshape\addtolength{\leftskip}{0mm}\setlength{\parindent}{0em}}      
{0mm}                                     
{\bfseries}                                 
{}                                  
{.5em}                                      
{\thmname{#1}\thmnumber{ #2}. \thmnote{[#3]}}

\pgfmathdeclarefunction{mysign}{1}{%
  \pgfmathparse{(#1>0)-(#1<0)}%
}
\pgfmathdeclarefunction{arccoshf}{1}{%
  \pgfmathparse{ln(#1+sqrt((#1)^2-1))}%
}
\pgfmathdeclarefunction{arcsinrad}{1}{%
  \pgfmathparse{asin(#1)*pi/180}%
}

\pgfmathdeclarefunction{qrf}{1}{%
  \pgfmathparse{1 + mysign(cos(#1*180/pi)) * arccoshf(sqrt(1+abs(cos(#1*180/pi))))}%
}

\pgfmathdeclarefunction{qif}{1}{%
  \pgfmathparse{mysign(sin(#1*180/pi)) * arcsinrad(sqrt(1-abs(cos(#1*180/pi))))}%
}
\pgfmathdeclarefunction{den}{1}{%
  \pgfmathparse{(4 + 2*sqrt(2)) - (2*sqrt(2) + 2)*cos(#1)}%
}
\pgfmathdeclarefunction{expx}{1}{%
  \pgfmathparse{exp(cos(#1))}%
}
\pgfmathdeclarefunction{ex}{1}{%
  \pgfmathparse{exp(-cos(#1))}%
}

\pgfmathdeclarefunction{den1}{1}{%
  \pgfmathparse{1 + 2*ex(#1)*cos(deg(sin(#1))) + (ex(#1))^2}%
}

\newtheoremstyle{fancydefinition}               
{.5\baselineskip±.2\baselineskip}           
{.5\baselineskip±.2\baselineskip}           
{\upshape\addtolength{\leftskip}{0mm}\setlength{\parindent}{0em}}      
{0mm}                                     
{\bfseries}                                 
{}                                  
{.5em}                                      
{\thmname{#1}\thmnumber{ #2}. \thmnote{[#3]}}

\theoremstyle{fancytheorem}

\newtheorem{theorem}{Theorem}[section]
\newtheorem{lemma}[theorem]{Lemma}
\newtheorem{corollary}[theorem]{Corollary}
\newtheorem{remark}[theorem]{Remark}

\theoremstyle{fancydefinition}

\allowdisplaybreaks
\usepackage{xcolor}

\newcommand{\D}{\mathbb{D}}
\newcommand{\A}{\mathcal{A}}
\newcommand{\Q}{\mathcal{Q}_{n}(M)}

\allowdisplaybreaks

\begin{document}

\title[Radius of Ma--Minda Starlikeness and Convexity]{Radius of Ma--Minda Starlikeness and Convexity\\ for Functions with Bounded Second Derivative}

\author[P. Poomalai]{Palanisamy Poomalai }
\address{Department of Mathematics \\National Institute of Technology\\Tiruchirappalli-620015,  India }
\email{poomalainitt@gmail.com}

\author[V. Ravichandran]{Vaithiyanathan Ravichandran }
\address{Department of Mathematics \\National Institute of Technology\\Tiruchirappalli-620015,  India }
\email{vravi68@gmail.com; ravic@nitt.edu}

\date{}

\begin{abstract}A normalized analytic function $f(z)=z+\sum_{k=n+1}^{\infty}a_kz^k$ defined on the unit disk $\mathbb{D}$ is called Ma--Minda starlike or Ma--Minda convex if $zf'(z)/f(z)$ or $1+zf''(z)/f'(z)$, respectively, is subordinate to a Ma--Minda function $\varphi$. In this paper, we investigate the radii of Ma--Minda starlikeness and convexity for functions satisfying $|f''(z)|\le M$ in $\mathbb{D}$. We first develop a unified approach for determining these radii and then apply it to obtain explicit radius estimates for several important choices of the Ma--Minda function~$\varphi$.

\end{abstract}

\keywords{Starlike functions; Convex functions;  Ma--Minda  starlikeness; Functions with bounded second derivative }

\subjclass{30C45; 30C80}

\maketitle

\section{Introduction}

Let  $\D=\{z\in\mathbb C:|z|<1\}$ denote the open unit disk in the complex plane. Let $\A$ be the class of analytic functions $f:\D\to\mathbb C$ normalized by
$f(0)=0$ and $ f'(0)=1$.  For each integer $n\ge1$, let $\A_n$ denote the subclass of $\A$ consisting of functions of the form
$f(z)=z+\sum_{k=n+1}^{\infty}a_kz^k$. Clearly, $ \A=\A_1\supset\A_2\supset\cdots\supset\A_n\supset\cdots.
$
Following \cite{mainpaper}, let $\Q$ denote the class of functions $f\in\A_n$ satisfying
$|f''(z)|\le M$ for all $z\in\D$,  where $M>0$ is a prescribed constant.
Among the most studied subclasses of the class $\mathcal S$ of univalent functions are the classes of starlike and convex functions. A function $f\in\A$ is said to be \emph{starlike} if the image domain $f(\D)$ is starlike with respect to the origin. Likewise, $f$ is said to be \emph{convex} if $f(\D)$ is a convex domain. Analytically,
\[
f\in \mathcal{ST}
\quad\Longleftrightarrow\quad
\RE\left(\frac{zf'(z)}{f(z)}\right)>0,
\]and\[
f\in\mathcal{CV}
\quad\Longleftrightarrow\quad
\RE\left(1+\frac{zf''(z)}{f'(z)}\right)>0.
\]
The above characterizations naturally led to  generalizations through the theory of differential subordination. Let $p$ and $q$ be analytic in $\D$. We say that $p$ is \emph{subordinate} to $q$, written $p\prec q$, if there exists an analytic function $w:\D\to\D$ with $w(0)=0$ such that $p(z)=q(w(z))$. If, in addition, $q$ is univalent in $\D$, then $p\prec q$ is equivalent to the conditions $p(0)=q(0)$ and $p(\D)\subseteq q(\D)$. Ma and Minda introduced a unified family of subclasses of starlike and convex functions associated with an analytic function $\varphi$ having positive real part in $\D$, satisfying $\varphi(0)=1$, $\varphi'(0)>0$, and mapping $\D$ conformally onto a domain that is starlike with respect to the point $1$ and symmetric about the real axis. The corresponding Ma--Minda classes are defined by
\[
\mathcal{ST}(\varphi)
=
\left\{
f\in\A:
\frac{zf'(z)}{f(z)}
\prec
\varphi(z)
\right\},\] and \[
\mathcal{CV}(\varphi)
=
\left\{
f\in\A:
1+\frac{zf''(z)}{f'(z)}
\prec
\varphi(z)
\right\}.
\]
Suitable choices of $\varphi$ recover many well-known subclasses of starlike and convex functions that have been extensively studied in geometric function theory.

The primary objective of this paper is to determine the Ma--Minda radii of starlikeness and convexity for the class $\Q$. Our approach is based on sharp disk estimates for
$zf'(z)/f(z)$ and $1+ zf''(z)/f'(z)$, together with   disk inclusion results for the corresponding Ma--Minda domains. This leads to a general radius theorem expressed in terms of the largest   disk centered at $1$ and contained in the image domain.
For all the Ma--Minda domains considered, with the exception of the Bernoulli lemniscate domain, the largest such disk is of the form $\mathbb{D}(1;1-\delta)$, where $\delta=\varphi(-1)$ is the leftmost point at which the boundary of the domain intersects the real axis. The Bernoulli lemniscate domain is the only exception, for which the largestdisk centered at $1$ has radius $\sqrt{2}-1$.
As applications of the general theorem, we obtain explicit radius formulae for fourteen Ma--Minda subclasses. The results are shown to be sharp for thirteen of these subclasses by constructing suitable extremal functions, while an explicit lower bound is established for the Bernoulli lemniscate subclass. Finally, for the case $n=M=1$, we provide figures illustrating all fourteen Ma--Minda domains together with the corresponding extremal images that establish the sharp radius results and the lower-bound result for the Bernoulli lemniscate subclass.

\section{Radius of starlikeness and convexity}

The following estimates, established in \cite[Corollary 1]{mainpaper}, form the basis for our investigation.

\begin{lemma}\label{mainlemma}
If $f\in\Q$, then, for  $z\in\D$,
\begin{align*}
\left|
\frac{zf'(z)}{f(z)}-1
\right| &
\le
\frac{nM|z|^n}{n(n+1)-M|z|^n},
\qquad  0<M\le n(n+1),\\
\intertext{and }
\left|
\frac{zf''(z)}{f'(z)}
\right| &
\le
\frac{nM|z|^n}{n-M|z|^n},
\qquad 0<M\le n.
\end{align*}
\end{lemma}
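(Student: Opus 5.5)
Since $f''$ is analytic with $|f''|\le M$ and, because $f\in\A_n$, $f''(z)=\sum_{k\ge n+1}k(k-1)a_kz^{k-2}$ vanishes to order $n-1$ at the origin, the plan is to apply the Schwarz lemma in its higher-order form. We get $f''(z)=Mz^{n-1}\omega(z)$ with $\omega$ analytic and $|\omega|\le1$ on $\D$, so $|f''(z)|\le M|z|^{n-1}$. Every estimate will then come from integrating along the radial segment $[0,z]$ and then reducing by a maximum-modulus argument.

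For the second inequality, I would write $f'(z)-1=\int_0^z f''(\zeta)\,d\zeta$. This gives $|f'(z)-1|\le \int_0^{r}Mt^{n-1}dt=Mr^n/n$, where $r=|z|$. Hence $|f'(z)|\ge 1-Mr^n/n$, which is positive when $M\le n$ and $r<1$. Combining this with $|zf''(z)|\le Mr^n$ yields
\[
\left|\frac{zf''(z)}{f'(z)}\right|\le \frac{Mr^n}{1-Mr^n/n}=\frac{nMr^n}{n-Mr^n},
\]
which is exactly the stated bound.

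For the first inequality, I would set $g(z)=f(z)-z$ and $h(z)=zf'(z)-f(z)=zg'(z)-g(z)$. Then
\[
\frac{zf'(z)}{f(z)}-1=\frac{h(z)}{f(z)}.
\]
Differentiating gives $h'(z)=zf''(z)$, so $|h(z)|\le\int_0^r t\cdot Mt^{n-1}dt=Mr^{n+1}/(n+1)$. For the denominator, $g''=f''$ and $g'(0)=0$ give $|g'(z)|\le Mr^n/n$ by the previous step. Integrating once more gives $|g(z)|\le Mr^{n+1}/(n(n+1))$, so $|f(z)|\ge r-Mr^{n+1}/(n(n+1))>0$ when $M\le n(n+1)$. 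Dividing, we obtain
\[
\left|\frac{zf'(z)}{f(z)}-1\right|\le\frac{Mr^{n+1}/(n+1)}{r\bigl(1-Mr^n/(n(n+1))\bigr)}=\frac{nMr^n}{n(n+1)-Mr^n},
\]
matching the statement.

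The main obstacle is the justification of the pointwise bound $|f''(z)|\le M|z|^{n-1}$ and the nonvanishing of the denominators. The first is the Schwarz lemma applied to $f''/(Mz^{n-1})$: this quotient has a removable singularity at $0$ and modulus at most $1$ on $|z|=1^-$ by the maximum principle. The second follows from the hypotheses $M\le n$ and $M\le n(n+1)$ together with $r<1$. I would also check the boundary case $M=n(n+1)$ or $M=n$: the denominators stay positive for $r<1$, so the bounds remain valid. Sharpness is not claimed in the lemma, so no extremal function is needed; the integrals along rays are elementary.
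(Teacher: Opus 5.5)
Your proof is correct. The paper gives no proof of this lemma; it quotes it from \cite[Corollary 1]{mainpaper}. So your argument is a self-contained replacement for a citation, not a variant of an in-paper argument.

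Your route is complete:
\begin{itemize}
\item the higher-order Schwarz lemma gives $|f''(z)|\le M|z|^{n-1}$;
\item radial integration then gives $|f'(z)-1|\le Mr^n/n$ and $|f(z)-z|\le Mr^{n+1}/(n(n+1))$;
\item via $(zf'-f)'=zf''$, it also gives $|zf'(z)-f(z)|\le Mr^{n+1}/(n+1)$.
\end{itemize}

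It makes visible two things the citation hides. First, the hypotheses $M\le n$ and $M\le n(n+1)$ are used only to keep the lower bounds $1-Mr^n/n$ and $r\bigl(1-Mr^n/(n(n+1))\bigr)$ positive for $r<1$. Second, the lemma is sharp. Take $\omega\equiv1$, i.e.\ $f_0(z)=z+\frac{M}{n(n+1)}z^{n+1}$: at points with $z^n=-r^n$, every inequality in your chains becomes an equality. This is exactly the extremal function the paper later uses for its sharpness claims.

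Two cosmetic points. Phrase the maximum-principle step as $|\omega(z)|\le \rho^{1-n}$ for $|z|\le\rho<1$, and then let $\rho\to1^-$. Also note that at $z=0$ the quotient $zf'(z)/f(z)$ is understood as its limit $1$, so the first inequality is trivial there.
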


We now establish a general theorem that yields the Ma--Minda radii of starlikeness and convexity for the class $\Q$. All subsequent radius results follow as direct applications of this theorem together with the appropriate disk inclusion lemma.

	\begin{theorem}\label{main theorem}
	Let
	$
	\mathbb D(1;1-\delta)\subseteq \varphi(\mathbb{D})
	$,
	for some $0<\delta<1$. Let \[
	R_1=
	\min\left\{
	1,
	\left(
	\frac{(1-\delta)n(n+1)}
	{M(n+1-\delta)}
	\right)^{1/n}
	\right\},\] and \[
	R_2=	\min\left\{
	1,
	\left(
	\frac{(1-\delta)n}
	{M(n+1-\delta)}
	\right)^{1/n}
	\right\}.
	\]
	The Ma--Minda starlikeness radius $R_{\mathcal{ST}(\varphi)}(\Q)$ of $\Q$ is at least $R_1$ whenever $0<M\le n(n+1)$, whereas the Ma--Minda convexity radius $R_{\mathcal{CV}(\varphi)}(\Q)$ is at least $R_2$ whenever $0<M\le n$.
	\end{theorem}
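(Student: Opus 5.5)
The plan is to combine Lemma~\ref{mainlemma} with the disk inclusion $\mathbb D(1;1-\delta)\subseteq\varphi(\mathbb D)$ and the fact that $\varphi$ is univalent. Since $\varphi$ is univalent, subordination $p\prec\varphi$ for an analytic $p$ with $p(0)=1=\varphi(0)$ is equivalent to $p(\mathbb D)\subseteq\varphi(\mathbb D)$. We work with the dilation $f_r(z)=f(rz)/r$. Then $f\in\mathcal{ST}(\varphi)$ on $|z|<r$ means exactly that $p(z)=zf'(z)/f(z)$ satisfies $p(\mathbb D_r)\subseteq\varphi(\mathbb D)$. So it suffices to show that for $|z|<R_1$ the value $zf'(z)/f(z)$ lies in $\mathbb D(1;1-\delta)$.

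For the starlikeness part, fix $0<M\le n(n+1)$ and $|z|=r<R_1$. The denominator satisfies $n(n+1)-Mr^n>0$, because $r<1$ and $M\le n(n+1)$. Hence Lemma~\ref{mainlemma} gives
\[
\left|\frac{zf'(z)}{f(z)}-1\right|\le \frac{nMr^n}{n(n+1)-Mr^n}.
\]
The right side is increasing in $r$. We require it to be at most $1-\delta$, which is equivalent to
\[
Mr^n\bigl(n+1-\delta\bigr)\le (1-\delta)n(n+1),
\qquad\text{i.e.}\qquad r\le\left(\frac{(1-\delta)n(n+1)}{M(n+1-\delta)}\right)^{1/n}.
\]
This is exactly the bound defining $R_1$, with the extra cap at $1$ coming from the domain $\mathbb D$. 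So for $|z|<R_1$ we get $zf'(z)/f(z)\in\mathbb D(1;1-\delta)\subseteq\varphi(\mathbb D)$, and $f_r\in\mathcal{ST}(\varphi)$ for every $r\le R_1$.

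The convexity part is identical, with $M\le n$ ensuring $n-Mr^n>0$. The condition
\[
\frac{nMr^n}{n-Mr^n}\le 1-\delta
\]
rearranges to $Mr^n(n+1-\delta)\le(1-\delta)n$, which gives $R_2$. Then $1+zf''(z)/f'(z)\in\mathbb D(1;1-\delta)$.

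The only delicate point is the boundary case $r=R_1$ when the estimate becomes equality. It is handled by taking the open disk $|z|<R_1$, or by noting that a strict inequality holds for $|z|<r$ and the open disk $\mathbb D(1;1-\delta)$ is used. We also need the positivity of the denominators, which is guaranteed by the hypotheses on $M$. I expect no real obstacle beyond this algebra.
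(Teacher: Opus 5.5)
Your proposal is correct and follows essentially the same route as the paper: apply the disk estimate of Lemma~\ref{mainlemma}, reduce to $nMr^n/(n(n+1)-Mr^n)\le 1-\delta$ (resp.\ $nMr^n/(n-Mr^n)\le 1-\delta$), rearrange to $Mr^n(n+1-\delta)\le(1-\delta)n(n+1)$ (resp.\ $\le(1-\delta)n$), and invoke $\mathbb D(1;1-\delta)\subseteq\varphi(\mathbb D)$; the paper only repackages this rearrangement as a monotonicity argument with a two-case split on the sign of $\Phi(1)$. Your restriction to the open disk $|z|<R_1$ is in fact slightly more careful than the paper's claim for $|z|\le R_1$, because the lemma's bound is non-strict and $\mathbb D(1;1-\delta)$ is open (the extremal $f_0$ attains the boundary value $\delta$ at $|z_0|=R_1$); strict monotonicity then gives the needed strict inequality for $|z|<R_1$.
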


\begin{proof}
Let $f\in\Q$ with $0<M\le n(n+1)$. By Lemma~\ref{mainlemma},
$zf'(z)/f(z) \in \D(1;c_1(r))$,  where \[
c_1(r)=\frac{nMr^n}{n(n+1)-Mr^n}.
\label{eq:2.3}
\]
Since the center of the disk estimate is fixed at $1$ and
$
\D(1;1-\delta)\subseteq\varphi(\mathbb{D}),
$
it suffices to determine the largest radius $r$ for which
$
\D(1;c_1(r))
\subseteq
\D(1;1-\delta).
$
Let
\[
h(r)
=
1-c_1(r)
=
1-\frac{nMr^n}{n(n+1)-Mr^n},
\]
so that $h(r)$ denotes the leftmost point  on the real axis  of the disk $\D(1;c_1(r))$.
A direct computation gives
\[
h'(r)
=
-\frac{Mn^3(n+1)r^{\,n-1}}
{\left(n(n+1)-Mr^n\right)^2}
<0,
\qquad
0<r<1.
\]
Hence the function $h$ is strictly decreasing on $(0,1)$, meaning that the disk
$\D(1;c_1(r))$ expands and its left endpoint moves monotonically to the left
as $r$ increases.
The largest admissible radius is determined when $h(r)$ reaches the
boundary point $\delta$. Thus we solve
$
h(r)=\delta,
$
or equivalently,
$
c_1(r)=1-\delta.
$
This reduces to
\[
\Phi(r)
:=
Mr^n(n+1-\delta)
-
(1-\delta)n(n+1).
\]
Whether this equation admits a solution in $(0,1)$ depends on the value of
$M$, and hence we consider two cases.

\medskip

\noindent\textbf{Case (i).}
Suppose that
\[
\frac{(1-\delta)n(n+1)}
{n+1-\delta}
<
M
\le
n(n+1).
\]
Then $
\Phi(0)
=
-(1-\delta)n(n+1)
<0$,
whereas
$
\Phi(1)
=
M(n+1-\delta)
-
(1-\delta)n(n+1)
>0$.
Since $\Phi$ is continuous and strictly increasing on $(0,1)$, it has a unique
zero in $(0,1)$, namely
\[
R_1
=
\left(
\frac{(1-\delta)n(n+1)}
{M(n+1-\delta)}
\right)^{1/n}.
\]

\medskip

\noindent\textbf{Case (ii).}
Suppose that
\[
0<M
\le
\frac{(1-\delta)n(n+1)}
{n+1-\delta}.
\]
Then
$
\Phi(1)=M(n+1-\delta)-(1-\delta)n(n+1)\le0$.
Since $\Phi$ is increasing on $(0,1)$,
$
\Phi(r)\le0$, for $0<r<1$.
Equivalently,
\[
c_1(r)
=
\frac{nMr^n}
{n(n+1)-Mr^n}
\le
1-\delta.
\]
Hence
$\D(1;c_1(r))\subseteq \D(1;1-\delta)$, for $0<r<1$,
and therefore
$
R_1=1$.
 Combining the two cases,
\[
R_1
=
\min\left\{
1,
\left(
\frac{(1-\delta)n(n+1)}
{M(n+1-\delta)}
\right)^{1/n}
\right\}.
\]
Finally, since $h$ is decreasing,
$
h(r)\ge h(R_1)=\delta$,
for all
$0<r\le R_1.
$
Hence
$
c_1(r)\le1-\delta,
$
so that
$
\D(1;c_1(r))
\subseteq
\D(1;1-\delta)
\subseteq
\varphi(\mathbb{D})$ for $0\le r\le R_1.
$
Therefore,
$
zf'(z)/f(z)
\in
\varphi(\mathbb{D}),
\text{for} \,
|z|\le R_1,
$
which proves
$
R_{\mathcal {ST}(\varphi)}(\Q)
\ge
R_1$.
	
If $f\in\Q$ with $0<M\le n$, by Lemma~\ref{mainlemma}, we have
	$	1+ zf''(z)/f'(z)	\in	\D(1;c_2(r))$,	where
	\[
	c_2(r)=\frac{nMr^n}{n-Mr^n}.
	\label{eq:1.4}
	\]
	Since 	$
	\D(1;1-\delta)\subseteq\varphi(\mathbb{D}),
	$
	it suffices to determine the largest radius $r$ for which
	$
	\D(1;c_2(r))
	\subseteq
	\D(1;1-\delta)$.
	Let
	\[
	g(r)
	=
	1-c_2(r)
	=
	1-\frac{nMr^n}{n-Mr^n}.
	\]A direct computation gives
	\[
	g'(r)
	=
	-\frac{Mn^3r^{\,n-1}}
	{\left(n-Mr^n\right)^2}
	<0,
	\qquad
	0<r<1.
	\]
	Hence the function $g$ is strictly decreasing on $(0,1)$, meaning that the disk
	$\D(1;c_2(r))$ expands and its left endpoint moves monotonically to the left
	as $r$ increases.   The largest admissible radius is determined when $g(r)$ reaches the
	boundary point $\delta$. Thus we solve
	$
	g(r)=\delta,
	$
	or equivalently,
	$
	c_2(r)=1-\delta.
	$
	This reduces to
	\[
	\Psi(r)
	:=
	Mr^n(n+1-\delta)
	-
	(1-\delta)n.
	\]
	The location of the solution depends on $M$, leading to two cases.
	
	\medskip
	
	\noindent\textbf{Case (i).}
	Suppose that
	\[
	\frac{(1-\delta)n}
	{n+1-\delta}
	<
	M
	\le
	n.
	\]
	Then
	$\Psi(0)
	=
	-(1-\delta)n
	<0$,
	whereas $
	\Psi(1)
	=
	M(n+1-\delta)
	-
	(1-\delta)n
	>0$.
	Since $\Psi$ is continuous and strictly increasing on $(0,1)$, it has a unique
	zero in $(0,1)$, namely
	\[
	R_2
	=
	\left(
	\frac{(1-\delta)n}
	{M(n+1-\delta)}
	\right)^{1/n}.
	\]
	
	\medskip
	
	\noindent\textbf{Case (ii).}
	Suppose that
	\[
	0<M
	\le
	\frac{(1-\delta)n}
	{n+1-\delta}.
	\]
	Then
	\[
	\Psi(1)
	=
	M(n+1-\delta)
	-
	(1-\delta)n
	\le0.
	\]
	Since $\Psi$ is increasing on $(0,1)$,
	$
	\Psi(r)\le0,$ for $
	0<r<1.
	$
	Equivalently,
	\[
	c_2(r)
	=
	\frac{nMr^n}
	{n-Mr^n}
	\le
	1-\delta.
	\]
	Hence
	$
	\D(1;c_2(r))
	\subseteq
	\D(1;1-\delta)$, for all $
	0<r<1,
	$
	and therefore
	$
	R_2=1$.  Combining the two cases,
	\[
	R_2
	=
	\min\left\{
	1,
	\left(
	\frac{(1-\delta)n}
	{M(n+1-\delta)}
	\right)^{1/n}
	\right\}.
	\]
	Finally, since $g$ is decreasing,
	$
	g(r)\ge g(R_2)=\delta,$
	for all
	$0<r\le R_2.
	$
	Hence
	$
	c_2(r)\le1-\delta,
	$
	so that
	$
	\D(1;c_2(r))
	\subseteq
	\D(1;1-\delta)
	\subseteq
	\varphi(\mathbb{D}),
	\,
	0\le r\le R_2.
	$
	Therefore,
	$
	1+zf''(z)/f'(z)
	\in
	\varphi(\mathbb{D})$ for  $|z|\le R_2,
	$
	which proves
$	R_{\mathcal {CV}(\varphi)}(\Q)
	\ge
	R_2$.
	\end{proof}
	
	The exponential starlike and convex classes were introduced by Mendiratta \textit{et al.}~\cite{MR3394060} in 2015 as special cases of the Ma--Minda classes corresponding to the function
	$\varphi_e(z)=e^z$.  Specifically, $\mathcal{ST}_e=\mathcal{ST}(\varphi_e)$ and $ \mathcal{CV}_e=\mathcal{CV}(\varphi_e)$.
	Equivalently, a function $f\in\mathcal{A}$ belongs to $\mathcal{ST}_e$ if
	$
	\left|\log\!\left({zf'(z)}/{f(z)}\right)\right|<1,
	$
	and belongs to $\mathcal{CV}_e$ if
	$
	\left|\log\!\left(1+{zf''(z)}/{f'(z)}\right)\right|<1.$
	
		\begin{lemma}\label{lemma1}\cite[Lemma 2.2]{MR3394060}
			For $1/e<a<e,\ \{w:|w-a|<r_a\}\subseteq \varphi_{e}(\mathbb{D})$, where $r_a$ is
			\begin{align*}
				r_a &=
				\begin{dcases}
					a-e^{-1},\ & \ e^{-1}<a\leq \frac{e+e^{-1}}{2}\\
					e-a,\ & \frac{e+e^{-1}}{2}\leq a<e.
				\end{dcases}
			\end{align*}
		\end{lemma}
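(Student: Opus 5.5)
The statement to be proved is Lemma~\ref{lemma1}. For $1/e<a<e$ the disk $\{w:|w-a|<r_a\}$ should lie in $\varphi_e(\mathbb{D})=\{w:|\log w|<1\}$. My plan is to parametrize the boundary of $\varphi_e(\mathbb{D})$ and compute the distance from $a$ to it. Since $\varphi_e$ is univalent, the domain is bounded by the curve $\Gamma:\ t\mapsto e^{e^{it}}=e^{\cos t}\bigl(\cos(\sin t)+i\sin(\sin t)\bigr)$, $t\in[-\pi,\pi]$, and it is star-shaped with respect to $1$ and hence simply connected. Moreover $a$ is a point of $(1/e,e)\subset\varphi_e(\mathbb{D})$. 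So the largest open disk centred at $a$ inside the domain has radius $\operatorname{dist}(a,\Gamma)$, and it suffices to show that $\operatorname{dist}(a,\Gamma)=r_a$.

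Step one is to write
\[
D(t):=|e^{e^{it}}-a|^2=e^{2\cos t}-2a\,e^{\cos t}\cos(\sin t)+a^2 .
\]
By the symmetry of $\Gamma$ in the real axis I only need $t\in[0,\pi]$. The endpoints give $D(0)=(e-a)^2$ and $D(\pi)=(a-e^{-1})^2$, and $r_a$ is exactly the smaller of $e-a$ and $a-1/e$. The switch between the two occurs at $a=(e+e^{-1})/2$, which matches the two cases of the lemma. Hence the lemma reduces to the inequality
\[
D(t)\ge\min\{D(0),D(\pi)\}\qquad(0\le t\le\pi).
\]

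Step two, which I expect to be the main obstacle, is proving that inequality. I would change variables to $x=\cos t\in[-1,1]$, so that $\sin t=\sqrt{1-x^2}$, and regard $D$ as a function $F(x)=e^{2x}-2a e^{x}\cos\sqrt{1-x^2}+a^2$. The goal is then to show that $F$ attains its minimum on $[-1,1]$ at an endpoint, for instance by showing that any interior critical point is a maximum.

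A cleaner route is to compare $\Gamma$ with circles. Since $F$ is linear in $a$ apart from the constant $a^2$, I would try to show two separate inequalities:
\[
F(x)\ge (a-e^{-1})^2 \quad\text{for } a\le\tfrac{e+e^{-1}}2,\qquad F(x)\ge (e-a)^2 \quad\text{for } a\ge\tfrac{e+e^{-1}}2 .
\]
The first rearranges to $e^{2x}-e^{-2}\ge 2a\bigl(e^x\cos\sqrt{1-x^2}-e^{-1}\bigr)$. When the bracket on the right is positive, the worst case is the largest admissible $a$, namely $a=(e+e^{-1})/2$. When the bracket is nonpositive, the worst case is the smallest $a$, namely $a=1/e$, and there the inequality is just $|w-1/e|\ge 0$. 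The second inequality is handled symmetrically, with worst case $a=(e+e^{-1})/2$ or $a=e$. Both cases therefore reduce to the single extreme value $a_0=(e+e^{-1})/2$, where I must show $F(x)\ge ((e-e^{-1})/2)^2$. This is a one-variable inequality on $[-1,1]$. I would verify it by analysing $F'$, using $\cos u\le 1-u^2/2+u^4/24$ for $u=\sqrt{1-x^2}\in[0,1]$ to get an explicit lower bound, and finish with elementary estimates or a careful monotonicity check on subintervals.

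Finally, I would note sharpness. The boundary points $e$ and $1/e$ lie at distance exactly $e-a$ and $a-1/e$ from $a$, so $r_a$ cannot be enlarged in either range. Since the statement only asserts inclusion, this is a remark rather than part of the proof.
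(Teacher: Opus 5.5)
\textbf{Verdict: there is a gap at the decisive step.} Your reduction is correct and useful, but the one inequality everything rests on is only announced, not proved.

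\textbf{What is right.} The paper gives no proof of this lemma; it quotes it from Mendiratta et al.~\cite{MR3394060}. Your affine-in-$a$ argument is just the triangle inequality $|w-a_0|\le|w-a|+(a_0-a)$ with $a_0=\cosh 1$. The disks internally tangent at $e^{-1}$ with centres $a\le a_0$ are nested, and so are those tangent at $e$ with centres $a\ge a_0$. Hence everything follows from a single inclusion: the disk with diameter $[e^{-1},e]$ lies in $\varphi_e(\mathbb{D})$.

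\textbf{The gap.} That inclusion, i.e.\ $F_{a_0}(x)\ge\sinh^2 1$ on $[-1,1]$, is the entire content of the lemma, and you only describe a strategy for it. Equality holds at both $x=1$ and $x=-1$. So every estimate you insert must stay exact at both endpoints. A subinterval or monotonicity check cannot by itself handle neighbourhoods of the endpoints, where the margin tends to zero.

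\textbf{The missing idea.} A factorisation makes the inequality transparent. Since $\cosh^2 1-\sinh^2 1=1$,
\[
F_{a_0}(x)-\sinh^2 1=e^{2x}+1-2\cosh 1\,e^{x}\cos\sqrt{1-x^2}=2e^{x}\bigl(\cosh x-\cosh 1\,\cos\sqrt{1-x^2}\bigr).
\]
So you need $\cosh x\ge\cosh 1\,\cos\sqrt{1-x^2}$ for $|x|\le1$, which is even in $x$. Put $s=x^2\in[0,1]$ and $\Lambda(s)=\log\cosh\sqrt{s}-\log\cos\sqrt{1-s}$; this is well defined because $\cos\sqrt{1-s}\ge\cos1>0$. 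For $0<s<1$,
\[
\Lambda'(s)=\frac{\tanh\sqrt{s}}{2\sqrt{s}}-\frac{\tan\sqrt{1-s}}{2\sqrt{1-s}}<\frac12-\frac12=0,
\]
because $\tanh v<v<\tan v$ for $0<v<\pi/2$. Hence $\Lambda(s)\ge\Lambda(1)=\log\cosh1$, which is exactly the required inequality. Equality holds only at $x=\pm1$, that is, at $t=0,\pi$. Combined with your nesting step, this completes the proof.
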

		Using this lemma we determine the radii of starlikeness and convexity associated with exponential  function  for the class $\Q$

	\begin{corollary}\label{corollary1}Let	\[
R_1
	=
	\min\left\{
	1,
	\left(
	\frac{\left(1-e^{-1}\right)n(n+1)}
	{M\left(n+1-e^{-1}\right)}
	\right)^{1/n}
	\right\},\] and \[
R_2
	=
	\min\left\{
	1,
	\left(
	\frac{\left(1-e^{-1}\right)n}
	{M\left(n+1-e^{-1}\right)}
	\right)^{1/n}
	\right\}.
	\]
	The $\mathcal {ST}_{e}$-radius of the class $\Q$ is $R_1$ and the $\mathcal {CV}_{e}$-radius    is $R_2$.
	\end{corollary}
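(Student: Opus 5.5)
The plan is to prove the lower bounds from Theorem~\ref{main theorem} with $\delta=e^{-1}$, and then show sharpness with a single explicit extremal function in $\Q$. Throughout I assume the hypotheses of Theorem~\ref{main theorem}, namely $0<M\le n(n+1)$ for starlikeness and $0<M\le n$ for convexity.

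\textbf{Lower bound.} First I check that $\D(1;1-e^{-1})\subseteq\varphi_e(\D)$. This is Lemma~\ref{lemma1} with $a=1$. Since $e+e^{-1}>2$, we have $e^{-1}<1\le (e+e^{-1})/2$, so we are in the first branch and $r_1=1-e^{-1}$. Then Theorem~\ref{main theorem} with $\delta=e^{-1}\in(0,1)$ gives $R_{\mathcal{ST}_e}(\Q)\ge R_1$ and $R_{\mathcal{CV}_e}(\Q)\ge R_2$, with $R_1,R_2$ exactly as in the statement. Only sharpness remains.

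\textbf{Sharpness.} I use
\[
f_0(z)=z+\frac{M}{n(n+1)}z^{n+1}.
\]
Here $f_0''(z)=Mz^{n-1}$, so $|f_0''|\le M$ and $f_0\in\Q$. Put $A=M/(n(n+1))$. A direct computation gives
\[
\frac{zf_0'(z)}{f_0(z)}=\frac{1+(n+1)Az^n}{1+Az^n}.
\]
At a point $z$ with $z^n=-r^n$ this equals
\[
1-\frac{nMr^n}{n(n+1)-Mr^n}=h(r),
\]
which is real and positive because $M\le n(n+1)$. So $f_0$ attains the bound of Lemma~\ref{mainlemma} at the left endpoint of the disk. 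Similarly $f_0'(z)=1+Mz^n/n$, and
\[
1+\frac{zf_0''(z)}{f_0'(z)}=1+\frac{nMz^n}{n+Mz^n}.
\]
At $z^n=-r^n$ this is $g(r)=1-nMr^n/(n-Mr^n)$.

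Now suppose $R_1<1$. Then $h(R_1)=e^{-1}$ by construction. Since $h$ is strictly decreasing, for $r$ slightly larger than $R_1$ the value $h(r)$ is a positive real number below $e^{-1}$. Such a value does not lie in $\varphi_e(\D)=\{e^w:|w|<1\}$, because the positive reals in that set are exactly $(e^{-1},e)$. Hence $zf_0'(z)/f_0(z)\notin\varphi_e(\D)$ at a point of modulus $r$, and $f_0$ fails to be $\mathcal{ST}_e$ on $|z|<r$ for every $r>R_1$. If $R_1=1$ there is nothing to prove. The convexity case is identical, using $g$ and $R_2$.

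The only delicate point is this boundary behaviour: one must verify that real values $\le e^{-1}$ are genuinely outside the image domain, and that the extremal values stay positive and real so the comparison is legitimate. Both follow from $M\le n(n+1)$ (respectively $M\le n$) and the monotonicity of $h$ and $g$ established in the proof of Theorem~\ref{main theorem}.
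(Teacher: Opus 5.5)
Your proof is correct and follows the paper's argument. The lower bound comes from Theorem~\ref{main theorem} with $\delta=e^{-1}$ via Lemma~\ref{lemma1} at $a=1$, and sharpness comes from $f_0(z)=z+Mz^{n+1}/(n(n+1))$ at points with $z^n=-r^n$; the paper evaluates exactly at $|z_0|=R_1$ to reach the boundary point $e^{-1}=\varphi_e(-1)$, while you step just beyond $R_1$, which is an equivalent formulation.

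One small slip: $h(r)=(n+1)(n-Mr^n)/(n(n+1)-Mr^n)$ is positive only when $Mr^n<n$, not merely because $M\le n(n+1)$, and the same holds for $g$. This does not affect the argument, since you only use positivity for $r$ near $R_1$, where it follows from continuity, and in any case nonpositive reals lie outside $\varphi_e(\D)\subset\{w:|\arg w|<1\}$.
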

	\begin{proof}
	By Lemma~\ref{lemma1},
	$
	\mathbb D\!\left(1;1-1/e\right)\subseteq\varphi_e(\mathbb{D}).
	$
	Hence, applying Theorem~\ref{main theorem} with
	\(
	\delta=1/e,
	\)
	we obtain the stated lower bounds for the
	$\mathcal {ST}_e$-
	and
	$\mathcal {CV}_e$-radii.
	To prove sharpness, consider the extremal function $f_0(z):\D \rightarrow \mathbb C$ defined by
	\begin{equation}\label{eq:sharp-stt}
	f_0(z)
	=
	z+\frac{M}{n(n+1)}z^{n+1}.
	\end{equation}
	Then
	\begin{equation}\label{eq:sharp-st}
	\frac{zf_0'(z)}{f_0(z)}
	=
	1+\frac{nMz^n}
	{n(n+1)+Mz^n},
	\end{equation}
	and
	\begin{equation}\label{eq:sharp-cv}
	1+\frac{zf_0''(z)}{f_0'(z)}
	=
	1+\frac{nMz^n}
	{n+Mz^n}.
	\end{equation}
	Suppose that
	\[
	\rho=
	\left(
	\frac{(1-e^{-1})n(n+1)}
	{M(n+1-e^{-1})}
	\right)^{1/n}<1,
	\]
	Let $z_0 \in \D$ be chosen so that $|z_0|=\rho$
	and
	$z_0^n=-\rho^n$.
	By the definition of $\rho$,
	\[
	M\rho^n\left(n+1-\frac1e\right)
	=
	\left(1-\frac1e\right)n(n+1) \implies \frac{nM\rho^n}
		{n(n+1)-M\rho^n}=1-\frac1e
	\]
	By \eqref{eq:sharp-st}, evaluated at $z=z_0$, we have
	\[
	\frac{z_0f_0'(z_0)}{f_0(z_0)}
	=
	1-\frac{nM\rho^n}
	{n(n+1)-M\rho^n}=\frac1e=\varphi_e(-1)\in\partial \varphi_e(\mathbb{D})
	\]
	Hence the
	$\mathcal {ST}_e$-radius is sharp.
	
	Similarly, suppose that
	\[
	\rho=
	\left(
	\frac{(1-e^{-1})n}
	{M(n+1-e^{-1})}
	\right)^{1/n}<1.
	\]
	Then
	\[
	M\rho^n\left(n+1-\frac1e\right)
	=
	\left(1-\frac1e\right)n \implies \frac{nM\rho^n}
		{n-M\rho^n}
		= 1-
		\frac1e
	\]
	and therefore,
	using \eqref{eq:sharp-cv}, evaluated at $z=z_0$, we have
	\[
	1+\frac{z_0f_0''(z_0)}{f_0'(z_0)}
	=
	1-\frac{nM\rho^n}
	{n-M\rho^n}
	=
	\frac1e
	=
	\varphi_e(-1)
	\in
	\partial\varphi_e(\mathbb{D}).
	\]
	Thus   the
	$\mathcal {CV}_e$-radius is also sharp.
	\end{proof}

	\begin{remark}
	For the particular class $\mathcal{Q}_1(1)$, corresponding to $n=1$ and $M=1$, the
	radius formulas in Corollary~\ref{corollary1} reduce to
	\[
	R_{\mathcal {ST}_e}(\mathcal{Q}_1(1))
	=
	\frac{2( e-1)}
	{2e-1 }
	\approx
	0.7746,
	\quad \text{	and }\quad
	R_{\mathcal{CV}_e}(\mathcal{Q}_1(1))
	=
	\frac{e-1}
	{2e-1}
	\approx
	0.3873.
	\]
	\end{remark}

	
	The subclasses of starlike functions associated with the cardioid were introduced by Sharma \textit{et al.}~\cite{MR3536076} as Ma--Minda classes corresponding to the function
	$
	\varphi_C(z)=1+{4z}/{3}+{2z^2}/{3}.
	$
	Specifically,
	$\mathcal{ST}_C=\mathcal{ST}(\varphi_C)$ and $\mathcal{CV}_C=\mathcal{CV}(\varphi_C)$.
	
	\begin{lemma}\label{lemma2}\cite[Lemma 2.5]{MR3536076}
			Let $ \varphi(\mathbb{D})=\varphi_C(\D)$ is the region bounded by cardioid $ \{x+iy:(9x^2+9y^2-18x+5)^2-16(9x^2+9y^2-6x+1)=0\} $. Then for $1/3<a<3$, $\{w:|w-a|<r_a\}\subset\varphi_C(\mathbb{D})$, where $r_a$ is given by
			\[
			r_a=
			\begin{cases}
			\dfrac{3a-1}{3}, & \dfrac{1}{3}<a\le\dfrac{5}{3}\\[2.5mm]
			3-a, & \dfrac{5}{3}\le a<3.
			\end{cases}
			\]
			
		\end{lemma}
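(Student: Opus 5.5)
Since $\varphi_C(z)=1+\tfrac{4z}{3}+\tfrac{2z^2}{3}=\tfrac13\bigl(1+2(1+z)^2\bigr)-\tfrac{2}{3}+\tfrac23$, the plan is to work with the polar description of the cardioid. Write $w=\varphi_C(e^{it})$. Then
\[
w-\tfrac13=\tfrac23(1+e^{it})^2=\tfrac83\cos^2(t/2)\,e^{it}.
\]
So, with pole $1/3$ and angle $t\in(-\pi,\pi]$, the boundary curve is
\[
\rho(t)=\tfrac43(1+\cos t).
\]
This matches the implicit equation in the statement. For the disk centered at $a$ with $1/3<a<3$, we need the distance from $a$ to the boundary curve:
\[
d(t)^2=\Bigl|\tfrac13-a+\tfrac43(1+\cos t)e^{it}\Bigr|^2 .
\]
Setting $b=a-\tfrac13\in(0,\tfrac83)$ and $c=\cos t$, this becomes
\[
d^2=b^2-\tfrac{8b}{3}(1+c)c+\tfrac{16}{9}(1+c)^2,
\]
a quadratic $Q(c)$ in $c\in[-1,1]$.

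The key step is to minimize $Q(c)$ over $[-1,1]$. The endpoints give $Q(-1)=b^2$, corresponding to the cusp at $1/3$, and $Q(1)=(\tfrac{16}{3}-b)^2-\ldots$; more precisely, $Q(1)=(b-\tfrac83)^2$, corresponding to the rightmost point $3$.

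The leading coefficient of $Q$ is
\[
\tfrac{16}{9}-\tfrac{8b}{3}.
\]
If $b\ge 2/3$, this is $\le 0$, so $Q$ is concave and its minimum over $[-1,1]$ is attained at an endpoint. In that case $r_a=\min\{b,\tfrac83-b\}$. This equals $b=a-\tfrac13=\tfrac{3a-1}{3}$ when $a\le 5/3$, and $3-a$ when $a\ge 5/3$, which is the claimed formula.

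If $b<2/3$, that is $a<1$, then $Q$ is convex. I will compute $Q'(-1)$:
\[
Q'(c)=-\tfrac{8b}{3}(1+2c)+\tfrac{32}{9}(1+c),
\]
so $Q'(-1)=\tfrac{8b}{3}>0$. Hence the convex $Q$ is increasing on $[-1,1]$ and the minimum is again $Q(-1)=b^2$. This gives $r_a=\tfrac{3a-1}{3}$ in this range as well.

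Finally, the disk must lie inside the region rather than outside it. Since $a$ is a real point in $(1/3,3)$, it is interior to $\varphi_C(\mathbb{D})$, and the open disk of radius equal to $\operatorname{dist}(a,\partial\varphi_C(\mathbb D))$ is contained in the region.

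The main obstacle is the algebra of $d(t)^2$, namely making sure the cross term is right:
\[
\RE\bigl((1+c)e^{it}\bigr)\cdot(-b)\cdot\tfrac83=-\tfrac{8b}{3}(1+c)c .
\]
I will double-check it at $t=0$ and $t=\pi$, which recover the distances to $3$ and to $1/3$ respectively.
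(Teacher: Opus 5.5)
Your proof is correct. The paper gives no proof of this lemma; it quotes it from \cite{MR3536076}, so there is nothing in the paper to compare against, and your argument is a complete, self-contained verification along the standard lines of minimizing the squared distance from $a$ to the boundary curve. The polar form $\varphi_C(e^{it})-\tfrac13=\tfrac43(1+\cos t)e^{it}$ reduces that squared distance to
\[
Q(c)=\tfrac19\bigl[24(1-a)c^2+8(5-3a)c+9a^2-6a+17\bigr],\qquad c=\cos t\in[-1,1].
\]
Your case split is right: $Q$ is concave for $a\ge 1$, and for $a<1$ it is convex with $Q'(-1)>0$. Either way $\min Q=\min\{(a-\tfrac13)^2,(3-a)^2\}$, which is exactly $r_a^2$.

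Three small points of polish:
\begin{itemize}
\item Delete the spurious $-\tfrac23+\tfrac23$ in your first identity and the abandoned expression ``$(\tfrac{16}{3}-b)^2-\ldots$'' for $Q(1)$.
\item Justify the passage from the curve to the boundary. Since $\varphi_C$ is continuous on $\overline{\mathbb{D}}$ and $\varphi_C(\mathbb{D})$ is open, $\partial\varphi_C(\mathbb{D})\subseteq\varphi_C(\partial\mathbb{D})$. Hence your computed distance to the curve is a lower bound for $\operatorname{dist}(a,\partial\varphi_C(\mathbb{D}))$, which is all the inclusion needs. Univalence on $\overline{\mathbb{D}}$, which follows from $1+z\in\overline{\mathbb{D}}(1;1)$, gives equality.
\item Say why $a$ is interior: $\varphi_C$ maps $(-1,1)$ increasingly onto $(1/3,3)$.
\end{itemize}
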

	
	\begin{corollary}\label{corollary2}Let
	\[
R_1
	=
	\min\left\{
	1,
	\left(
	\frac{2n(n+1)}
	{M\left(3n+2\right)}
	\right)^{1/n}
	\right\},
	\quad \text{and}\quad
R_2
	=
	\min\left\{
	1,
	\left(
	\frac{2n}
	{M\left(3n+2\right)}
	\right)^{1/n}
	\right\}.
	\]
	The $\mathcal {ST}_{C}$-radius of the class $\Q$ is $R_1$ and the $\mathcal {CV}_{C}$-radius  is $R_2$.
	\end{corollary}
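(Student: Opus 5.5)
The plan is to run exactly the argument of Corollary~\ref{corollary1}, with the cardioid in place of the exponential domain. The ingredients are Lemma~\ref{lemma2}, Theorem~\ref{main theorem}, and the extremal function $f_0$ from \eqref{eq:sharp-stt}.

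\textbf{Disk inclusion.} First I would apply Lemma~\ref{lemma2} with $a=1$. Since $1/3<1\le 5/3$, the lemma gives $r_1=(3\cdot1-1)/3=2/3$, so
\[
\D(1;2/3)\subseteq\varphi_C(\D).
\]
This is the hypothesis of Theorem~\ref{main theorem} with $\delta=1/3$. I also check that $\delta=\varphi_C(-1)=1-4/3+2/3=1/3$, which confirms that $\delta$ is the leftmost real boundary point of the cardioid domain.

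\textbf{Lower bounds.} Substituting $\delta=1/3$ into the formulas of Theorem~\ref{main theorem} gives
\[
\frac{(1-\delta)n(n+1)}{M(n+1-\delta)}=\frac{(2/3)n(n+1)}{M(n+2/3)}=\frac{2n(n+1)}{M(3n+2)},
\]
and similarly $\frac{(1-\delta)n}{M(n+1-\delta)}=\frac{2n}{M(3n+2)}$. Hence the $\mathcal{ST}_C$-radius is at least $R_1$ when $0<M\le n(n+1)$, and the $\mathcal{CV}_C$-radius is at least $R_2$ when $0<M\le n$.

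\textbf{Sharpness, starlike case.} Suppose $\rho=\left(\frac{2n(n+1)}{M(3n+2)}\right)^{1/n}<1$. Choose $z_0$ with $|z_0|=\rho$ and $z_0^n=-\rho^n$. The defining relation $M\rho^n(3n+2)=2n(n+1)$ rearranges to
\[
\frac{nM\rho^n}{n(n+1)-M\rho^n}=\frac23 .
\]
Then by \eqref{eq:sharp-st},
\[
\frac{z_0f_0'(z_0)}{f_0(z_0)}=1-\frac23=\frac13=\varphi_C(-1)\in\partial\varphi_C(\D).
\]

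\textbf{Sharpness, convex case.} This is identical. Take $\rho=\left(\frac{2n}{M(3n+2)}\right)^{1/n}<1$ and the same choice of $z_0$. Equation \eqref{eq:sharp-cv} gives $1+z_0f_0''(z_0)/f_0'(z_0)=1/3$.

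In both cases, on any disk $|z|<r$ with $r>\rho$ the expression takes values just to the left of $1/3$ on the real axis. Those values lie outside $\varphi_C(\D)$, so the radius cannot exceed $\rho$. When the minimum is $1$ there is nothing further to prove.

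\textbf{Main obstacle.} The only step needing care is justifying that the point $1/3$ is genuinely a boundary point and that values slightly less than $1/3$ are excluded from $\varphi_C(\D)$. To handle this, I would note that the cardioid's real-axis section is $(\varphi_C(-1),\varphi_C(1))=(1/3,3)$. This follows from the monotonicity of $\varphi_C$ on $[-1,1]$, since $\varphi_C'(x)=4/3+4x/3\ge0$. It also matches the cardioid equation at $x=1/3$, $y=0$: there $9x^2+9y^2-18x+5=1-6+5=0$ and $9x^2+9y^2-6x+1=1-2+1=0$, so both terms vanish.
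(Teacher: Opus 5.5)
Your proposal is correct and follows the paper's proof essentially verbatim: Lemma~\ref{lemma2} at $a=1$ gives $\D(1;2/3)\subseteq\varphi_C(\D)$, Theorem~\ref{main theorem} with $\delta=1/3$ gives the lower bounds, and $f_0$ evaluated at $z_0^n=-\rho^n$ attains $\varphi_C(-1)=1/3$. Your extra boundary check goes beyond what the paper gives, but monotonicity on $[-1,1]$ only describes the image of the real diameter; the cleanest justification is the identity $\varphi_C(z)=\tfrac13+\tfrac23(1+z)^2$, which shows that $\varphi_C(\D)$ omits all of $(-\infty,1/3]$, since $(1+z)^2\notin(-\infty,0]$ for $z\in\D$.
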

	
	\begin{proof}
		By Lemma~\ref{lemma2},
		$
		\mathbb D\!\left(1;1-1/3\right)\subset\varphi_C(\mathbb{D}).
		$
		Hence, Theorem~\ref{main theorem} with
		\(
		\delta=1/3
		\)
		immediately yields the asserted lower bounds for the
		$\mathcal {ST}_{C}$- and $\mathcal {CV}_{C}$-radii. To prove sharpness, let \[
	\rho
	=
	\left(
	\frac{2n(n+1)}
	{M(3n+2)}
	\right)^{1/n}
	<1
	\] and  consider the extremal function $f_0:\D \rightarrow \mathbb C$ defined by \eqref{eq:sharp-stt}.
Choose $z_0 \in \D$ with $|z_0|=\rho$ such that $z_0^n=-\rho^n$.
	By the definition of $\rho$, it follows that
	\[
	M\rho^n\left(n+\frac23\right)
	=
	\frac23n(n+1) \implies
	\frac{nM\rho^n}
	{n(n+1)-M\rho^n}
	=
	1-\frac13.
	\]
	Hence, by \eqref{eq:sharp-st}, evaluated at $z=z_0$, we have
	\[
	\frac{z_0f_0'(z_0)}{f_0(z_0)}
	=1-\frac{nM\rho^n}{n(n+1)-M\rho^n}
	=\frac13
	=\varphi_C(-1)
	\in
	\partial\varphi_C(\mathbb{D}),
	\]
	which proves sharpness of the $\mathcal {ST}_C$-radius.
	
	Similarly, let
	\[
	\rho
	=
	\left(
	\frac{2n}
	{M(3n+2)}
	\right)^{1/n}
	<1.
	\]
	By the definition of $\rho$, it follows that
	\[
	M\rho^n\left(n+\frac23\right)
	=
	\frac23n
	\implies	
	\frac{nM\rho^n}
	{n-M\rho^n}
	=
	1-\frac13.
	\]
	Using \eqref{eq:sharp-cv},  evaluated at $z=z_0$, we have
	\[
	1+\frac{z_0f_0''(z_0)}{f_0'(z_0)}
	=	1-
		\frac{nM\rho^n}
		{n-M\rho^n}=
	\frac13
	=
	\varphi_C(-1)
	\in
	\partial\varphi_C(\mathbb{D}),
	\]
	this proves the sharpness of the
	$\mathcal {CV}_{C}$-radius.
	\end{proof}
	
	\begin{remark}
	For the particular class $\mathcal{Q}_1(1)$, corresponding to $n=1$ and $M=1$, the
	radius formulas in Corollary~\ref{corollary2} reduce to
	\[
	R_{\mathcal {ST}_C}(\mathcal{Q}_1(1))
	=
	\frac{4}{5}
	=
	0.8,
	\quad \text{and}
\quad
	R_{\mathcal {CV}_C}(\mathcal{Q}_1(1))
	=
	\frac{2}{5}
	=
	0.4.
	\]
	\end{remark}
	
	\begin{figure}[!htbp]
	\centering
	\begin{subfigure}{0.48\textwidth}
	\centering
	
		\begin{tikzpicture}
			\begin{axis}[
			    axis lines=center,
			    xmin=-0.3, xmax=3.0,
			    ymin=-1.4, ymax=1.4,
			    xtick={ 1.0, 2.0},
			    ytick={-1.0, 1.0},
			    thick,
			    scale=0.75
			]
			
			   \addplot[
					domain=0:360,
					samples=400,
					smooth,
					color=black,
					line width=1pt
				]
				({0.8235 + 0.45546*cos(x)},
				{0.45546*sin(x)});
				
			    \addplot[
			        domain=0:360,
			        samples=400,
			        smooth,
			        color=black,
			        line width=1pt
			    ]
			    ({1 + (1-exp(-1))*cos(x)},
			     {(1-exp(-1))*sin(x)});
			
			    \addplot[
			        domain=0:360,
			        samples=250,
			        smooth,
			        color=black,
			        line width=1pt
			    ]
			    ({exp(cos(x))*cos(deg(sin(x)))}, {exp(cos(x))*sin(deg(sin(x)))});
			
			\end{axis}
			\end{tikzpicture}
	\caption{$\mathcal{ST}_e$-Radius}
	\end{subfigure}
	\begin{subfigure}{0.48\textwidth}
	\centering
\begin{tikzpicture}
		\begin{axis}[
		    axis lines=center,
		    width=7.8cm,
		    height=7.5cm,
		    xmin=-0.4, xmax=3.3,
		    ymin=-1.8, ymax=1.8,
		    xtick={1.0,2.0,3.0},
		    ytick={-1.0,1.0},
		    thick,
		    scale=0.75
		]
	   \addplot[
	   			domain=0:360,
	   			samples=400,
	   			smooth,
	   			color=black,
	   			line width=1pt
	   		]
	   		({0.8095 + 0.476*cos(x)},
	   		{0.476*sin(x)});
	
	\addplot[
				domain=0:360,
				samples=400,
				smooth,
			color=black,
				line width=1pt
			]
			({1 + 0.66*cos(x)},
			{0.66*sin(x)});
		
		\addplot[
		    domain=-180:180,
		    samples=250,
		    smooth,
		    color=black,
		    line width=1pt
		]
		({1 + (4/3)*cos(x) + (2/3)*cos(2*x)},
		 {(4/3)*sin(x) + (2/3)*sin(2*x)});
		
		\end{axis}
		\end{tikzpicture}
		\caption{$\mathcal {ST}_C$-Radius}
	\end{subfigure}
	\caption{Radii associated with $\varphi_e(z)=e^z$ and $\varphi_C(z)=1+{4z}/{3}+{2z^2}/{3}$ for the class $\Q$.}
	\label{p1f1}
	\end{figure}
	
	The disk estimates for the starlikeness and convexity problems have the same center and are determined by the same boundary point of the corresponding Ma--Minda domain. Consequently, the associated geometric illustrations are identical for both the starlikeness and convexity radii.
	
	The  class $\mathcal{ST}_{\leftmoon}$  of starlike associated with a lune were proposed by Raina and Sokół~\cite{MR3419845} in 2015 where
		$
		\mathcal{ST}_{\leftmoon}
		=
		\mathcal{ST}\!\left(\varphi_{\leftmoon}\right)
		$
		and
		$
		\mathcal{CV}_{\leftmoon}
		=
		\mathcal{CV}\!\left(\varphi_{\leftmoon}\right),
		$
		with
		$
		\varphi_{\leftmoon}(z)=z+\sqrt{1+z^{2}}.
		$
		The image of the unit disk under $\varphi_{\leftmoon}$ is the lune domain
		$
		\left\{w\in\mathbb{C}:\left|w^{2}-1\right|<2|w|\right\}.
		$

		\begin{lemma}\label{lemma3}\cite[lemma 2.1]{MR3718233}
			For $\sqrt{2}-1<a\leq \sqrt{2}+1,\ \{w:|w-a|<r_a\}\subseteq \{w:|w^2-1|<2|w|\}=\varphi_{\leftmoon}(\mathbb{D})$, where $r_a=1-|\sqrt{2}-a|$.
		\end{lemma}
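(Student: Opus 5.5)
The plan is to replace the quartic inequality describing the lune by two circle conditions, after which the disk inclusion reduces to elementary distance estimates between centres. The key identity I will verify is, for $w=x+iy$ with $s=|w|^2$,
\[
|w^2-1|^2-4|w|^2=(s-1)^2-4x^2=\bigl(|w-1|^2-2\bigr)\bigl(|w+1|^2-2\bigr).
\]
It follows by expanding $|w^2-1|^2=s^2-2\RE(w^2)+1$, using $\RE(w^2)=2x^2-s$, together with $|w\mp1|^2=s\mp2x+1$.

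By this identity, $|w^2-1|<2|w|$ holds exactly when the two factors on the right have opposite signs. In particular, every $w$ with
\[
|w-1|<\sqrt2 \quad\text{and}\quad |w+1|>\sqrt2
\]
lies in the lune. This region is the component containing $1$. It is bounded by the arc of $|w-1|=\sqrt2$ through $\sqrt2+1$ and the arc of $|w+1|=\sqrt2$ through $\sqrt2-1$, and these two circles meet at $\pm i$. These points agree with $\varphi_{\leftmoon}(\pm1)=\sqrt2\pm1$ and $\varphi_{\leftmoon}(\pm i)=\pm i$. Since the set $\{|w^2-1|<2|w|\}$ is invariant under $w\mapsto -w$, it also has a mirror component. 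That component plays no role here, because it suffices to place the disk inside the component containing $1$.

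Next I translate the two conditions into conditions on the disk $\{|w-a|<r\}$ with $a>0$ real, using the triangle inequality. The disk lies in $\{|w-1|<\sqrt2\}$ when $r\le \sqrt2-|a-1|$. It misses the closed disk $\{|w+1|\le\sqrt2\}$ when $|a+1|-r\ge\sqrt2$, that is, when $r\le a+1-\sqrt2$. It therefore remains to check that
\[
r_a=1-|\sqrt2-a|\le\min\{\sqrt2-|a-1|,\ a+1-\sqrt2\}.
\]
For $\sqrt2-1<a\le\sqrt2$ we have $r_a=a+1-\sqrt2$, so the second bound holds with equality. The first bound reduces to $a\le\sqrt2$ when $a\ge1$, and to $\sqrt2-1\ge1-\sqrt2$ when $a<1$. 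For $\sqrt2\le a\le\sqrt2+1$ we have $r_a=\sqrt2+1-a$, so the first bound holds with equality, and the second reduces to $a\ge\sqrt2$.

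The only genuinely non-routine step is finding the factorization, since it is what exposes the lune as an intersection of a disk with the exterior of another disk. Once it is in hand, the case split at $a=\sqrt2$ and the verification are straightforward. I will also note that $r_a>0$ exactly on the stated range of $a$.
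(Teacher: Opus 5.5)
The paper does not prove this lemma. It quotes it from \cite{MR3718233} and uses it only through the case $a=1$ in Corollary~\ref{corollary3}, so your argument has to stand on its own, and it does. The factorization $(s-1)^2-4x^2=\bigl(|w-1|^2-2\bigr)\bigl(|w+1|^2-2\bigr)$ is correct, both triangle-inequality criteria are correct, and the case check on either side of $a=\sqrt2$ is accurate. Disk lemmas of this kind are usually proved by parametrizing the boundary curve and minimizing its distance to $a$. Your factorization instead exhibits the lune as $\{|w-1|<\sqrt2\}\setminus\{|w+1|\le\sqrt2\}$, so the inclusion reduces to comparing distances between centres. As a bonus it shows $r_a$ is best possible: the binding constraint corresponds to $a-r_a=\sqrt2-1$ (for $a\le\sqrt2$) or $a+r_a=\sqrt2+1$ (for $a\ge\sqrt2$), and both points lie on $\partial\varphi_{\leftmoon}(\D)$.

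There are two small things to tighten.

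First, matching the four points $\sqrt2\pm1$, $\pm i$ does not by itself show that $\{|w-1|<\sqrt2<|w+1|\}$ equals $\varphi_{\leftmoon}(\D)$. A short argument does:
\begin{itemize}
\item $w=\varphi_{\leftmoon}(z)$ satisfies $w^2-2zw-1=0$, whose two roots $w$ and $-1/w$ lie in opposite half-planes.
\item $\RE\varphi_{\leftmoon}>0$ on $\D$: its values satisfy $|w^2-1|=2|w||z|<2|w|$, so they avoid the imaginary axis, where $|w^2-1|\ge 2|w|$, and $\varphi_{\leftmoon}(0)=1$.
\item Since $z=(w^2-1)/(2w)$, this gives $\varphi_{\leftmoon}(\D)=\{|w^2-1|<2|w|,\ \RE w>0\}$.
\item For $\RE w>0$ one has $|w+1|>|w-1|$, which forces the sign pattern $|w-1|<\sqrt2<|w+1|$.
\end{itemize}
This also makes explicit that the equality $\{|w^2-1|<2|w|\}=\varphi_{\leftmoon}(\D)$ written in the statement holds only for the right-hand component, which your proof already respects.

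Second, $r_a>0$ holds only for $\sqrt2-1<a<\sqrt2+1$. At the included endpoint $a=\sqrt2+1$ you get $r_a=0$ and an empty disk, so the inclusion is trivial there; your remark that $r_a>0$ on the whole stated range is slightly off at that endpoint.
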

	\begin{corollary}\label{corollary3}Let
	\[
R_1
	=
	\min\left\{
	1,
	\left(
	\frac{(2-\sqrt2)n(n+1)}
	{M(n+2-\sqrt2)}
	\right)^{1/n}
	\right\},\] and \[
R_2
	=
	\min\left\{
	1,
	\left(
	\frac{(2-\sqrt2)n}
	{M(n+2-\sqrt2)}
	\right)^{1/n}
	\right\}.
	\]	The $\mathcal {ST}_{\leftmoon}$-radius of the class $\Q$ is $R_1$
	and the $\mathcal {CV}_{\leftmoon}$-radius is $R_2$.
	\end{corollary}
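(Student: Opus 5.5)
The plan is to apply Theorem~\ref{main theorem} with $\delta=\sqrt2-1=\varphi_{\leftmoon}(-1)$. For the lower bound we only need the disk inclusion $\D(1;2-\sqrt2)\subseteq\varphi_{\leftmoon}(\D)$. Lemma~\ref{lemma3} with $a=1$ gives it: $1$ lies in $(\sqrt2-1,\sqrt2+1]$ and $r_1=1-|\sqrt2-1|=2-\sqrt2=1-\delta$.

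Substituting $\delta=\sqrt2-1$ into $R_1$ and $R_2$, we get $1-\delta=2-\sqrt2$ and $n+1-\delta=n+2-\sqrt2$. These are exactly the expressions in the statement. Hence $R_{\mathcal{ST}_{\leftmoon}}(\Q)\ge R_1$ for $0<M\le n(n+1)$ and $R_{\mathcal{CV}_{\leftmoon}}(\Q)\ge R_2$ for $0<M\le n$.

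For sharpness I will reuse the extremal function $f_0$ from \eqref{eq:sharp-stt}, as in Corollaries~\ref{corollary1} and \ref{corollary2}. Suppose $\rho=\bigl((2-\sqrt2)n(n+1)/(M(n+2-\sqrt2))\bigr)^{1/n}<1$, and choose $z_0$ with $|z_0|=\rho$ and $z_0^n=-\rho^n$. The defining relation $M\rho^n(n+2-\sqrt2)=(2-\sqrt2)n(n+1)$ rearranges to
\[
\frac{nM\rho^n}{n(n+1)-M\rho^n}=2-\sqrt2 .
\]
Then \eqref{eq:sharp-st} gives $z_0f_0'(z_0)/f_0(z_0)=\sqrt2-1$.

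It remains to check that $\sqrt2-1$ is a boundary point of the lune. We have $\varphi_{\leftmoon}(-1)=-1+\sqrt2$. Directly, with $w=\sqrt2-1$, one gets $|w^2-1|=|2-2\sqrt2|=2(\sqrt2-1)=2|w|$, so the strict inequality defining the lune fails with equality. Hence $w\in\partial\varphi_{\leftmoon}(\D)$. Moreover, for $\rho<r<1$ the same real negative point $1-c_1(r)<\sqrt2-1$ lies outside the lune, so the radius cannot be exceeded.

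The convexity case is identical. We use \eqref{eq:sharp-cv} with $\rho=\bigl((2-\sqrt2)n/(M(n+2-\sqrt2))\bigr)^{1/n}$, which gives $1+z_0f_0''(z_0)/f_0'(z_0)=\sqrt2-1$.

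The only step needing care is confirming that the largest disk centered at $1$ inside the lune has radius $2-\sqrt2$ and touches at the leftmost real point. Lemma~\ref{lemma3} settles this, so I expect no real obstacle beyond the arithmetic.
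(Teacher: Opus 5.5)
Your proposal is correct and follows essentially the same route as the paper. Lemma~\ref{lemma3} with $a=1$ gives $\mathbb{D}(1;2-\sqrt2)\subseteq\varphi_{\leftmoon}(\mathbb{D})$, Theorem~\ref{main theorem} with $\delta=\sqrt2-1$ gives the lower bounds, and $f_0$ at $z_0^n=-\rho^n$ attains $\varphi_{\leftmoon}(-1)=\sqrt2-1\in\partial\varphi_{\leftmoon}(\mathbb{D})$.

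Your extra remark for $\rho<r<1$ is redundant, since the boundary value at $|z_0|=\rho$ already suffices because $\varphi_{\leftmoon}(\mathbb{D})$ is open. If you keep it, note that $1-c_1(r)$ need not be negative. When it is negative, it is excluded because $\varphi_{\leftmoon}(\mathbb{D})$ lies in $\RE w>0$, not by the inequality $|w^2-1|<2|w|$, whose solution set also has a mirror component in the left half-plane.
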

	
	\begin{proof}	
	Since $
	\mathbb D\!\left(1;2-\sqrt2\right)\subset\varphi_{\leftmoon}(\mathbb{D})$
	by Lemma~\ref{lemma3}, an application of
	Theorem~\ref{main theorem} with
	\(
	\delta=\sqrt2-1
	\)
	gives the stated lower bounds for the
	$\mathcal {ST}_{\leftmoon}$- and
	$\mathcal{CV}_{\leftmoon}$-radii.
 It remains to establish the sharpness of these bounds. Choose $z_0 \in \D$ with $|z_0|=\rho$ such that $z_0^n=-\rho^n$, consider the extremal function $f_0:\D \rightarrow \mathbb{C}$ defined  by \eqref{eq:sharp-stt}.
 We first consider the
	$\mathcal {ST}_{\leftmoon}$-radius. Let
	\[
	\rho
	=
	\left(
	\frac{(2-\sqrt2)n(n+1)}
	{M(n+2-\sqrt2)}
	\right)^{1/n}
	<1,
	\]
	Using the definition of $\rho$, we have
	$
	M\rho^n(n+2-\sqrt2)
	=
	(2-\sqrt2)n(n+1),
	$
	and consequently,
	\[
	\frac{nM\rho^n}
	{n(n+1)-M\rho^n}
	=1-(\sqrt2-1).
	\]
	Hence, by \eqref{eq:sharp-st}, evaluated at $z=z_0$, we have
	\[
	\frac{z_0f_0'(z_0)}{f_0(z_0)}
	=1-\frac{nM\rho^n}{n(n+1)-M\rho^n}=
	\sqrt2-1
	=
	\varphi_{\leftmoon}(-1)\in
		\partial\varphi_{\leftmoon}(\mathbb{D}).
	\]
	showing that the
	$\mathcal {ST}_{\leftmoon}$-radius is sharp.
	
	The proof for the $\mathcal {CV}_{\leftmoon}$-radius follows in the same manner. Let
	\[
	\rho
	=
	\left(
	\frac{(2-\sqrt2)n}
	{M(n+2-\sqrt2)}
	\right)^{1/n}
	<1.
	\]
	Using the definition of $\rho$, we obtain
	$
	M\rho^n(n+2-\sqrt2)
	=
	(2-\sqrt2)n,
	$
	and hence
	\[
	\frac{nM\rho^n}
	{n-M\rho^n}
	=
	1-(\sqrt2-1).
	\]
	Using \eqref{eq:sharp-cv},  evaluated at $z=z_0$, we have
	\[
	1+\frac{z_0f_0''(z_0)}{f_0'(z_0)}
	=
	1-
	\frac{nM\rho^n}
	{n-M\rho^n}
	=
	\sqrt2-1
	=
	\varphi_{\leftmoon}(-1)\in
		\partial\varphi_{\leftmoon}(\mathbb{D}).
	\]
	showing that the
	$\mathcal {CV}_{\leftmoon}$-radius is sharp.
	\end{proof}

	\begin{remark}
	For the particular class $\mathcal{Q}_1(1)$, corresponding to $n=1$ and $M=1$, the
	radius formulas in Corollary~\ref{corollary3} reduce to
	\[
	R_{\mathcal {ST}_{\leftmoon}}(\mathcal{Q}_1(1))
	=
	\frac{2(2-\sqrt2)}
	{3-\sqrt2}
	\approx
	0.738796,
	\] and \[
	R_{\mathcal {CV}_{\leftmoon}}(\mathcal{Q}_1(1))
	=
	\frac{2-\sqrt2}
	{3-\sqrt2}
	\approx
	0.369398.
	\]
	\end{remark}

	The Ma--Minda classes associated with the rational function were introduced by Kumar and Ravichandran~\cite{MR3496681} and correspond to the function
	$
	\varphi_{R}(z)
	=
	1+({z^{2}+kz})/({k^{2}-kz}),
	$ where $
	k=\sqrt{2}+1.
	$
	Specifically,
	$
	\mathcal{ST}_{R}
	=
	\mathcal{ST}(\varphi_{R})
	$ and $
	\mathcal{CV}_{R}
	=
	\mathcal{CV}(\varphi_{R}).
	$

	\begin{lemma}\label{lemma4}\cite[Lemma 2.2]{MR3496681}
			
			For $2(\sqrt{2}-1)<a<2,\ \{w:|w-a|<r_a\}\subseteq \varphi_{R}(\mathbb{D})$, where $r_a$ is
			\begin{align*}
				r_a &=
				\begin{dcases}
					a-2(\sqrt{2}-1),\ & \ 2(\sqrt{2}-1)<a\leq \sqrt{2}\\
					2-a,\ & \sqrt{2}\leq a<2.
				\end{dcases}
			\end{align*}
		\end{lemma}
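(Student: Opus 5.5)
The plan is to compute the distance from the real point $a$ to the boundary curve $\partial\varphi_R(\mathbb D)$ explicitly and minimise it. Since $\varphi_R$ is univalent on $\overline{\mathbb D}$ (it is a Ma--Minda function whose only pole is at $z=k>1$), $\varphi_R(\mathbb D)$ is a Jordan domain bounded by $\Gamma=\{\varphi_R(e^{i\theta}):\theta\in[-\pi,\pi]\}$. For a point $a$ in the domain, $r_a=\min_\theta|\varphi_R(e^{i\theta})-a|$ is the radius of the largest disk centred at $a$ inside the domain.

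I would first record the two real boundary points. A direct computation gives $\varphi_R(-1)=1-\sqrt2/(k^2+k)=2\sqrt2-2$ and $\varphi_R(1)=1+(k+1)/(k^2-k)=2$, using $k^2+k=4+3\sqrt2$ and $k^2-k=2+\sqrt2$. Their midpoint is $\sqrt2$, which matches the switching point in the formula for $r_a$. So the claim is exactly that the nearest boundary point to $a$ is always one of the two real boundary points, and which one depends on whether $a\le\sqrt2$ or $a\ge\sqrt2$.

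Next I would write $w(\theta)=\varphi_R(e^{i\theta})=u(\theta)+iv(\theta)$. To do this, multiply $(e^{2i\theta}+ke^{i\theta})/(k^2-ke^{i\theta})$ by the conjugate of the denominator, using $|k^2-ke^{i\theta}|^2=k^2(k^2+1-2k\cos\theta)$. Then $u$ and $v^2$ become rational functions of $t=\cos\theta\in[-1,1]$, because $\sin^2\theta=1-t^2$ and the numerators involve $\cos\theta$, $\cos2\theta$ and $\sin\theta\sin2\theta$. Hence
\[
D_a(t)=|w(\theta)-a|^2=(u-a)^2+v^2
\]
is a rational function of $t$ with denominator a power of $k^2+1-2kt>0$. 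The inclusion is equivalent to the statement
\[
\min_{t\in[-1,1]}D_a(t)=\min\{(a-2\sqrt2+2)^2,(2-a)^2\},
\]
that is, $D_a(t)\ge\min\{D_a(-1),D_a(1)\}$ for all $t\in[-1,1]$.

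The main obstacle is this extremal step. I would try to show that $D_a(t)-D_a(-1)$ and $D_a(t)-D_a(1)$ factor as $(1+t)$, respectively $(1-t)$, times a polynomial in $t$ over the positive denominator. It would then remain to show that the cofactor is nonnegative on $[-1,1]$ in the relevant range of $a$. Because the cofactor is affine (or at worst quadratic) in $a$, it suffices to check it at the endpoint values $a=2\sqrt2-2$, $a=\sqrt2$ and $a=2$, which leaves one-variable polynomial inequalities in $t$ with the explicit constant $k=\sqrt2+1$.

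If the factorisation becomes messy, my fallback is a geometric route. The domain is symmetric about $\mathbb R$ and starlike with respect to $1$, and near the real axis the boundary curvature at the points $2\sqrt2-2$ and $2$ is smaller than that of the comparison circles. One can then check that the circle $|w-a|=r_a$ meets $\Gamma$ only at the relevant real point, by substituting the parametrisation into $|w-a|^2=r_a^2$ and showing that the resulting polynomial in $t$ has $t=\mp1$ as its only root in $[-1,1]$.
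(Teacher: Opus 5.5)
The paper gives no proof of this lemma; it quotes it from \cite{MR3496681}, so there is no in-paper argument to compare with. Your plan is a correct self-contained proof, and it closes more cleanly than you expect.

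Using $k^2=2k+1$ (so $k^2-1=2k$ and $k^2+1=2(k+1)$), the boundary point $w=\varphi_R(e^{i\theta})=u+iv$ with $t=\cos\theta$ satisfies
\[
w-1=\frac{e^{2i\theta}+2e^{i\theta}-1}{2(k+1-kt)},\qquad u-1=\frac{t^2+t-1}{k+1-kt},\qquad |w-1|^2=\frac{k+1+kt}{k^2(k+1-kt)}.
\]
With $b=a-1$ one then gets
\[
D_a(t)-(a-2\sqrt2+2)^2=\frac{(1+t)\bigl[2b(1-t)+(6\sqrt2-8)-(4-2\sqrt2)b\bigr]}{k+1-kt},
\]
\[
D_a(t)-(2-a)^2=\frac{(1-t)\bigl[2b(1+t)-2\sqrt2+2(2+\sqrt2)b\bigr]}{k+1-kt},
\]
where $k+1-kt\ge 1$ on $[-1,1]$.

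The cofactors are linear in $t$ and affine in $a$. This is forced, since $|w-a|^2-|w_0-a|^2$ has no $a^2$ term, so your endpoint reduction is legitimate.
\begin{itemize}
\item The first bracket equals $2(3-2\sqrt2)(1+t)$ at $a=2\sqrt2-2$, and $2(\sqrt2-1)(1-t)$ at $a=\sqrt2$.
\item The second bracket equals $2(\sqrt2-1)(1+t)$ at $a=\sqrt2$, and $2(1+t)+4$ at $a=2$.
\end{itemize}
This also explains the switch at $\sqrt2$: the first bracket is nonnegative at $t=1$ exactly when $a\le\sqrt2$, and the second is nonnegative at $t=-1$ exactly when $a\ge\sqrt2$.

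There are two small corrections.

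First, being a Ma--Minda function with its pole at $k>1$ does not by itself give univalence on $\overline{\mathbb{D}}$. You do not need it. Continuity on $\overline{\mathbb{D}}$ gives $\partial\varphi_R(\mathbb{D})\subseteq\varphi_R(\partial\mathbb{D})$. Also $a\in\varphi_R(\mathbb{D})$, because $\varphi_R'(x)=(k^2+2kx-x^2)/\bigl(k(k-x)^2\bigr)>0$ on $(-1,1)$, so $\varphi_R$ maps $(-1,1)$ onto $(2\sqrt2-2,2)$. That is all the inclusion requires.

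Second, $\varphi_R'(-1)=(k^2-2k-1)/\bigl(k(k+1)^2\bigr)=0$. So $2\sqrt2-2$ is the tip of a cusp of the boundary curve, and nearby boundary points have real part less than $2\sqrt2-2$. The curvature comparison in your fallback is therefore meaningless at that point. Since the algebraic route already works, simply drop the fallback.
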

\begin{corollary}\label{corollary4}
Let
\[
R_1
=
\min\left\{
1,
\left(
\frac{(3-2\sqrt2)n(n+1)}
{M(n+3-2\sqrt2)}
\right)^{1/n}
\right\},\] and \[
R_2
=
\min\left\{
1,
\left(
\frac{(3-2\sqrt2)n}
{M(n+3-2\sqrt2)}
\right)^{1/n}
\right\}.
\] The $\mathcal {ST}_{R}$-radius of the class $\Q$ is $R_1$
and the $\mathcal {CV}_{R}$-radius is $R_2$.
\end{corollary}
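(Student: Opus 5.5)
We argue exactly as in Corollaries~\ref{corollary1}--\ref{corollary3}: a lower bound from Theorem~\ref{main theorem}, and sharpness from the extremal function \eqref{eq:sharp-stt}.

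\emph{Disk inclusion.} We first need $\delta=\varphi_R(-1)$ and a disk about $1$ inside $\varphi_R(\mathbb D)$. With $k=\sqrt2+1$,
\[
\varphi_R(-1)=1+\frac{1-k}{k^2+k}=1-\frac{\sqrt2}{(\sqrt2+1)(\sqrt2+2)}=1-\frac{1}{(\sqrt2+1)^2}=1-(3-2\sqrt2)=2\sqrt2-2 .
\]
This agrees with the left endpoint $2(\sqrt2-1)$ in Lemma~\ref{lemma4}. Take $a=1$, which lies in $2(\sqrt2-1)<a\le\sqrt2$. Lemma~\ref{lemma4} then gives $r_1=1-2(\sqrt2-1)=3-2\sqrt2$, so
\[
\mathbb D(1;3-2\sqrt2)\subseteq\varphi_R(\mathbb D),\qquad \delta=2\sqrt2-2\in(0,1).
\]
Since $1-\delta=3-2\sqrt2$ and $n+1-\delta=n+3-2\sqrt2$, Theorem~\ref{main theorem} yields exactly the stated $R_1$ (for $0<M\le n(n+1)$) and $R_2$ (for $0<M\le n$) as lower bounds for the $\mathcal{ST}_R$- and $\mathcal{CV}_R$-radii.

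\emph{Sharpness.} When the radius equals $1$, the inequality $R\le 1$ is trivial. So assume
\[
\rho=\left(\frac{(3-2\sqrt2)n(n+1)}{M(n+3-2\sqrt2)}\right)^{1/n}<1 .
\]
Take $f_0$ from \eqref{eq:sharp-stt} and choose $z_0$ with $|z_0|=\rho$ and $z_0^n=-\rho^n$. The defining relation $M\rho^n(n+3-2\sqrt2)=(3-2\sqrt2)n(n+1)$ rearranges to
\[
\frac{nM\rho^n}{n(n+1)-M\rho^n}=3-2\sqrt2 .
\]
Substituting into \eqref{eq:sharp-st} gives
\[
\frac{z_0f_0'(z_0)}{f_0(z_0)}=1-(3-2\sqrt2)=2\sqrt2-2=\varphi_R(-1)\in\partial\varphi_R(\mathbb D).
\]
Hence for $|z|>\rho$ the disk estimate cannot stay inside $\varphi_R(\mathbb D)$.

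The convexity case is identical. With $\rho=\bigl((3-2\sqrt2)n/(M(n+3-2\sqrt2))\bigr)^{1/n}<1$ we get
\[
\frac{nM\rho^n}{n-M\rho^n}=3-2\sqrt2,
\]
and \eqref{eq:sharp-cv} evaluated at $z_0$ gives $1+z_0f_0''(z_0)/f_0'(z_0)=2\sqrt2-2$.

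To justify that $2\sqrt2-2$ is a boundary point, note that $\varphi_R$ is univalent on a neighbourhood of the closed disk, since its pole is at $z=k>1$. Thus $\varphi_R(-1)$ lies on the boundary curve and is not attained in $\mathbb D$. This is the only point needing care beyond the computation of $\varphi_R(-1)$ above.
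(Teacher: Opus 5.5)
Your argument is the paper's proof. Lemma~\ref{lemma4} at $a=1$ gives $\mathbb D(1;3-2\sqrt2)\subseteq\varphi_R(\mathbb D)$, Theorem~\ref{main theorem} with $\delta=2(\sqrt2-1)$ gives the lower bounds, and $f_0$ from \eqref{eq:sharp-stt} evaluated where $z_0^n=-\rho^n$ gives sharpness. Your explicit computation of $\varphi_R(-1)$ is a check the paper leaves implicit.

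The one incorrect step is the justification in your final paragraph. A pole at $z=k$ outside $\overline{\mathbb D}$ gives analyticity up to the boundary, not univalence. In fact $\varphi_R$ is univalent on no neighbourhood of $\overline{\mathbb D}$, because
\[
\varphi_R'(z)=\frac{k^2+2kz-z^2}{k(k-z)^2}=\frac{(1+z)\bigl((\sqrt2+1)^2-z\bigr)}{k(k-z)^2}
\]
vanishes at $z=-1$. So the boundary curve has a cusp at $2\sqrt2-2$, and $\varphi_R$ is locally two-to-one near $-1$.

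The conclusion you need is still true and easy to verify directly. Since $\varphi_R(-1)-1=-(3-2\sqrt2)=-1/k^2$ and $k^2-1=2k$, clearing denominators in $\varphi_R(z)=2\sqrt2-2$ gives $(z+1)^2=0$. Thus this value is attained only at $z=-1\notin\mathbb D$, while $\varphi_R(-r)\to2\sqrt2-2$ as $r\to1^-$. Hence $2\sqrt2-2\in\overline{\varphi_R(\mathbb D)}\setminus\varphi_R(\mathbb D)=\partial\varphi_R(\mathbb D)$, since the domain is open. With this replacement your proof is complete.
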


\begin{proof}
From Lemma~\ref{lemma4},
$
\mathbb D\!\left(1;3-2\sqrt2\right)\subset\varphi_R(\mathbb{D}).
$
Applying Theorem~\ref{main theorem} with
\(
\delta=2(\sqrt2-1)
\)
gives the asserted lower bounds for the
$\mathcal {ST}_R$- and $\mathcal {CV}_R$-radii.
\noindent It remains to establish the sharpness of these bounds. Choose $z_0 \in \D$ with $|z_0|=\rho$ such that $z_0^n=-\rho^n$, consider the extremal function $f_0:\D \rightarrow \mathbb{C}$ defined  by \eqref{eq:sharp-stt}.
We next verify that these bounds are sharp. Consider first the
$\mathcal {ST}_R$-radius. Let
\[
\rho
=
\left(
\frac{(3-2\sqrt2)n(n+1)}
{M(n+3-2\sqrt2)}
\right)^{1/n}
<1,
\]
By the definition of $\rho$ it follows $ M\rho^n(n+3-2\sqrt2)
=(3-2\sqrt2)n(n+1),$
which implies
\[
\frac{nM\rho^n}
{n(n+1)-M\rho^n}
=
3-2\sqrt2.
\]
Using \eqref{eq:sharp-st} and evaluated at $z=z_0$, hence
\[
	\frac{z_0f_0'(z_0)}{f_0(z_0)}
	=1-\frac{nM\rho^n}{n(n+1)-M\rho^n}
=
2(\sqrt2-1)
=
\varphi_R(-1)\in
\partial\varphi_R(\mathbb{D}).
\]
which shows that the
$\mathcal {ST}_R$-radius is sharp.

An analogous argument establishes the sharpness of the
$\mathcal {CV}_R$-radius. Let
\[
\rho
=
\left(
\frac{(3-2\sqrt2)n}
{M(n+3-2\sqrt2)}
\right)^{1/n}
<1.
\]
Again, by the definition of $\rho$, we have $ M\rho^n(n+3-2\sqrt2)
=(3-2\sqrt2)n,$
and therefore
\[
\frac{nM\rho^n}
{n-M\rho^n}
=
3-2\sqrt2.
\]
Using \eqref{eq:sharp-cv} and evaluated at $z=z_0$, hence
$
|z_0|=\rho,
$ and $
z_0^n=-\rho^n,
$
we obtain
\[
1+\frac{z_0f_0''(z_0)}{f_0'(z_0)}
=
1-
\frac{nM\rho^n}
{n-M\rho^n}
=
2(\sqrt2-1)
=
\varphi_R(-1)\in
\partial\varphi_R(\mathbb{D}).
\]
which proves that the
$\mathcal {CV}_R$-radius is sharp.
\end{proof}

	\begin{remark}
	For the particular class $\mathcal{Q}_1(1)$, corresponding to $n=1$ and $M=1$, the
	radius formulas in Corollary~\ref{corollary4} reduce to
	\[
	R_{\mathcal {ST}_R}(\mathcal Q_1(1))
	=
	\frac{2(3-2\sqrt2)}
	{4-2\sqrt2}
	=
	\frac{2-\sqrt2}{2}
	\approx
	0.292893,
	\]
	and
	\[
	R_{\mathcal {CV}_R}(\mathcal Q_1(1))
	=
	\frac{3-2\sqrt2}
	{4-2\sqrt2}
	=
	\frac{2-\sqrt2}{4}
	\approx
	0.146447.
	\]
	\end{remark}
	\begin{figure}[!htbp]
		\centering
		\begin{subfigure}{0.48\textwidth}
		\centering
	\begin{tikzpicture}
			\begin{axis}[
			    axis lines=center,
			    width=8cm,
			    height=8.8cm,
			    xmin=-0.4, xmax=2.7,
			    ymin=-1.8, ymax=1.8,
			    xtick={1.0,2.0,3.0},
			    ytick={-1,1},
			    thick,
			    scale=0.6
			]
			   \addplot[
			   			domain=0:360,
			   			samples=400,
			   			smooth,
			   			color=black,
			   			line width=1pt
			   		]
			   		({0.8419 + 0.4277*cos(x)},
			   		{0.4277*sin(x)});
			
			\addplot[
						domain=0:360,
						samples=400,
						smooth,
					color=black,
						line width=1pt
					]
					({1 + 0.585786*cos(x)},
					{0.585786*sin(x)});
			\addplot[
			    domain=-180:180,
			    samples=400,
			    smooth,
			    color=black,
			    line width=1pt,
			]
			({cos(x) + sqrt(abs(cos(x)) + (cos(x))^2)},
			 {sin(x) + sign(sin(2*x)) * sqrt(abs(cos(x)) - (cos(x))^2)});	
			\end{axis}
			\end{tikzpicture}
	
		\caption{$\mathcal{ST}_{\leftmoon}$-Radius}
		\end{subfigure}
		\begin{subfigure}{0.48\textwidth}
		\centering
		
	\begin{tikzpicture}
			\begin{axis}[
			    axis lines=center,
			    width=8.8cm,
			    height=7.2cm,
			    xmin=-0.3, xmax=2.3,
			    ymin=-1, ymax=1,
			    xtick={0.5,1.0,1.5,2.0},
			    ytick={-1.0,-0.5,0.5,1.0},
			    thick,
			    scale=0.7
			]
		   \addplot[
		   			domain=0:360,
		   			samples=400,
		   			smooth,
		   			color=black,
		   			line width=1pt
		   		]
		   		({0.978 + 0.149*cos(x)},
		   		{0.149*sin(x)});
		
		\addplot[
					domain=0:360,
					samples=400,
					smooth,
				color=black,
					line width=1pt
				]
				({1 + 0.171573*cos(x)},
				{0.171573*sin(x)});		
			\addplot[
			    domain=-180:180,
			    samples=400,
			    smooth,
			    color=black,
			    line width=1pt,
			]
			({(cos(2*x) - 2*sqrt(2)*cos(x) + 3 + 2*sqrt(2)) / den(x)},
			 {(sin(2*x) + 2*sin(x)) / den(x)});
			
			\end{axis}
			\end{tikzpicture}
		
		\caption{$\mathcal {ST}_R$-Radius}
		\end{subfigure}
		\caption{Radii associated with $\varphi_{\leftmoon}(z)=z+\sqrt{1+z^2}$ and $\varphi_R(z)=1+(z^2+kz)/(k^2-kz) $, $k=\sqrt{2}+1$ for the class $\Q$.}
		\label{p1f2}
			\end{figure}
	
		The cardioid starlike class were introduced by Kumar and Kamaljeet~\cite{MR4216363} as the Ma--Minda class corresponding to the function
	$
	\varphi_{c}(z)=1+ze^{z}.
	$
	Specifically,
	$
	\mathcal{ST}_{c}
	=
	\mathcal{ST}(\varphi_{c})
	$ and $\mathcal{CV}_{c}
	=
	\mathcal{CV}(\varphi_{c})$.
	\begin{lemma}\label{lemma5}\cite[Lemma 2.2]{MR4216363}
			
		For $1-1/e<a<1+e$, $\{w:|w-a|<r_a\}\subseteq \varphi_{c}(\mathbb{D})$, where $r_a$ is
		\begin{align*}
			r_a &=
			\begin{dcases}
			a-1+\frac{1}{e},\ & 1-\frac{1}{e}<a\le 1+\frac{e-e^{-1}}{2} \ \\
			e-(a-1),\ & 1+\frac{e-e^{-1}}{2}\le a<1+e.
			\end{dcases}
		\end{align*}
	\end{lemma}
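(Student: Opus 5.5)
The proof reduces to computing the distance from $a$ to the boundary curve $\varphi_c(\partial\mathbb D)$. Since $\varphi_c$ is a Ma--Minda function, $\varphi_c(\mathbb D)$ is a Jordan domain bounded by the curve
\[
\Gamma:\ w(t)=1+e^{it}e^{e^{it}},\qquad t\in[-\pi,\pi].
\]
The curve $\Gamma$ is symmetric about the real axis and meets it at $\varphi_c(-1)=1-1/e$ and $\varphi_c(1)=1+e$. A real point $a\in(1-1/e,1+e)$ therefore lies in $\varphi_c(\mathbb D)$, because the domain is starlike with respect to $1$ and symmetric. Hence $\{w:|w-a|<r_a\}\subseteq\varphi_c(\mathbb D)$ holds as soon as $r_a\le \min_{t}|w(t)-a|$. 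By symmetry it suffices to take $t\in[0,\pi]$.

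Put $b=a-1\in(-1/e,e)$ and use $w(t)-1=e^{\cos t}e^{i(t+\sin t)}$. This gives
\[
d(t):=|w(t)-a|^2=e^{2\cos t}-2b\,e^{\cos t}\cos(t+\sin t)+b^2 .
\]
The endpoint values are $d(\pi)=(b+1/e)^2$ and $d(0)=(e-b)^2$, i.e.\ the squares of the two candidate values of $r_a$. The threshold $b=(e-e^{-1})/2$ is exactly where these two quantities coincide. The whole lemma thus follows from the claim
\[
d(t)\ge \min\{d(0),d(\pi)\}\qquad (0\le t\le\pi),
\]
for every $b\in(-1/e,e)$. I would prove the two ranges of $b$ separately.

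For $-1/e<b\le (e-e^{-1})/2$, I want $d(t)-(b+1/e)^2\ge0$, that is,
\[
e^{2\cos t}-e^{-2}\ \ge\ 2b\bigl(e^{\cos t}\cos(t+\sin t)+e^{-1}\bigr).
\]
Regarded as a function of $b$, the difference $d(t)-(b+1/e)^2$ is linear. So it is enough to check the two endpoints $b=-1/e$ and $b=(e-e^{-1})/2$.
\begin{itemize}
\item At $b=-1/e$ the inequality reads $|w(t)-(1-1/e)|\ge0$, which is trivial.
\item At $b=(e-e^{-1})/2$ the claim becomes the single inequality $d(t)\ge d(0)=d(\pi)$.
\end{itemize}
The range $(e-e^{-1})/2\le b<e$ is handled the same way. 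The right endpoint $b=e$ gives $|w(t)-(1+e)|\ge0$, and the left endpoint is again the threshold value.

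Everything therefore reduces to one explicit one-variable inequality at $b_0=(e-e^{-1})/2$:
\[
F(t)=e^{2\cos t}-(e-e^{-1})\,e^{\cos t}\cos(t+\sin t)\ \ge\ e^{-2}-\tfrac{(e-e^{-1})}{e}+\ldots,
\]
where the right-hand side is the common value $(e-b_0)^2-b_0^2$. This is the main obstacle.

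My first attempt is to split $[0,\pi]$ at the point where $\cos(t+\sin t)$ changes sign, i.e.\ where $t+\sin t=\pi/2$.
\begin{itemize}
\item On the part where the cosine is nonpositive, the middle term of $F$ is nonnegative. The bound should then follow from $e^{2\cos t}\ge e^{-2}$ together with a crude lower estimate.
\item On the remaining subinterval, I would bound $\cos(t+\sin t)\le\cos t$, which is valid there since $t+\sin t\ge t$ and both angles lie in $[0,\pi/2]$. This leaves an inequality in $x=\cos t$ of the form $e^{2x}-(e-e^{-1})xe^{x}\ge c$, to be checked by elementary calculus on $[x_1,1]$.
\end{itemize}
If that estimate is too lossy near $t=0$, I would instead verify $F'(t)$ has a single sign change via direct differentiation, supplemented by a numerical check at finitely many points.
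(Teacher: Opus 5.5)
\textbf{Verdict: genuine gap.} The reduction to a single inequality is correct, but that inequality is never proved, and the estimate you propose for it is false near $t=0$.

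The paper gives no proof of this lemma; it imports it from \cite{MR4216363}, so your argument has to stand on its own.

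\textbf{What works.} For fixed $t$, the quantities $d(t)-(b+1/e)^2$ and $d(t)-(e-b)^2$ are affine in $b$. Geometrically, the disks tangent internally at $1-1/e$ (resp.\ $1+e$) are nested. So the lemma is equivalent to the single case $b_0=\sinh 1$. There the constant is exactly $(e-b_0)^2-b_0^2=e^2-2eb_0=1$; your displayed right-hand side has the wrong sign on its second term. Dividing by $e^{\cos t}$, the claim becomes
\[
\sinh(\cos t)\ \ge\ \sinh(1)\,\cos(t+\sin t),\qquad 0\le t\le\pi,
\]
with equality at both $t=0$ and $t=\pi$. This inequality is the entire content of the lemma, and your proposal does not establish it.

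\textbf{Where it fails.}
\begin{itemize}
\item On $[0,t^*]$: for $G(x)=e^{2x}-(e-e^{-1})xe^{x}$ you have $G(1)=1$ and $G'(1)=2e^2-2e(e-e^{-1})=2>0$. Hence $G(x)<1$ just below $x=1$; for instance $G(0.99)\approx 0.98$. Replacing $\cos(t+\sin t)$ by $\cos t$ throws away about $\tfrac32t^2$. Near the tangency point $t=0$ this costs more than the true growth $d(t)-d(0)\approx e(5\sinh1-e)t^2$.
\item On $[t^*,\pi]$: no ``crude'' estimate can work, because equality holds at $t=\pi$. Whatever you use must be sharp there.
\item The fallback, checking the sign of $F'$ at finitely many sample points, is not a proof.
\end{itemize}

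\textbf{How to close it.} Your own tools suffice, but with their roles swapped.
\begin{itemize}
\item On $[\pi/2,\pi]$: $\sinh$ is concave on $[-1,0]$, so $\sinh u\ge u\sinh1$ there. Combined with your bound $\cos(t+\sin t)\le\cos t$ (valid since $t\le t+\sin t\le\pi$), this gives $\sinh(\cos t)\ge\sinh(1)\cos t\ge\sinh(1)\cos(t+\sin t)$.
\item On $[0,\pi/2]$: use convexity of $\sinh$ on $[0,1]$ through the tangent line $\sinh u\ge\sinh1-\cosh(1)(1-u)$. It then suffices that $\sinh(1)\bigl(1-\cos(t+\sin t)\bigr)\ge\cosh(1)(1-\cos t)$, i.e.\ $\sin\frac{t+\sin t}{2}\ge\sqrt{\coth 1}\,\sin\frac t2$. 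Since $\sin t\ge 2t/\pi$ and $\frac{t+\sin t}{2}<\frac\pi2$, the left side is at least $\sin(ct)$ with $c=\frac12+\frac1\pi$. The ratio $\sin(ct)/\sin(t/2)$ is decreasing, because $x\cot x$ decreases on $(0,\pi)$ and $c>\frac12$. At $t=\pi/2$ it equals $\sin(\frac\pi4+\frac12)/\sin\frac\pi4\approx1.357$, which exceeds $\sqrt{\coth1}\approx1.146$.
\end{itemize}
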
	
\begin{corollary}\label{corollary5}
	Let
	\[
R_1
	=
	\min\left\{
	1,
	\left(
	\frac{n(n+1)}
	{M(1+en)}
	\right)^{1/n}
	\right\},
\quad \text{and}\quad
R_2
	=
	\min\left\{
	1,
	\left(
	\frac{n}
	{M(1+en)}
	\right)^{1/n}
	\right\}.
	\] The $\mathcal {ST}_{c}$-radius of the class $\Q$ is $R_1$
	and the $\mathcal {CV}_{c}$-radius is $R_2$.
\end{corollary}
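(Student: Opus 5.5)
Our plan follows the pattern of Corollaries~\ref{corollary1}--\ref{corollary4}: apply Theorem~\ref{main theorem} with the appropriate $\delta$, then verify sharpness with the extremal function \eqref{eq:sharp-stt}.

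\textbf{Choice of $\delta$.} Here $\varphi_c(-1)=1-1/e$, so we take $\delta=1-1/e$. Setting $a=1$ in Lemma~\ref{lemma5} is admissible, since $1-1/e<1\le 1+(e-e^{-1})/2$. The lemma then gives $r_1=1-1+1/e=1/e$, that is,
\[
\mathbb D(1;1/e)\subseteq\varphi_c(\mathbb D), \qquad 1-\delta=1/e .
\]

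\textbf{Lower bounds.} Substituting into Theorem~\ref{main theorem}, we use $n+1-\delta=n+1/e=(1+en)/e$. Hence
\[
\frac{(1-\delta)n(n+1)}{M(n+1-\delta)}=\frac{n(n+1)}{M(1+en)},
\qquad
\frac{(1-\delta)n}{M(n+1-\delta)}=\frac{n}{M(1+en)} .
\]
These match the expressions defining $R_1$ and $R_2$. This gives $R_{\mathcal{ST}_c}(\Q)\ge R_1$ for $0<M\le n(n+1)$, and $R_{\mathcal{CV}_c}(\Q)\ge R_2$ for $0<M\le n$.

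\textbf{Sharpness.} When $\rho=(n(n+1)/(M(1+en)))^{1/n}<1$, choose $z_0$ with $z_0^n=-\rho^n$. The defining relation $M\rho^n(n+1/e)=n(n+1)/e$ rearranges to
\[
\frac{nM\rho^n}{n(n+1)-M\rho^n}=\frac1e .
\]
By \eqref{eq:sharp-st},
\[
\frac{z_0f_0'(z_0)}{f_0(z_0)}=1-\frac1e=\varphi_c(-1)\in\partial\varphi_c(\mathbb D).
\]
The convexity case is identical using \eqref{eq:sharp-cv} and $\rho=(n/(M(1+en)))^{1/n}$, where the relation reads $nM\rho^n/(n-M\rho^n)=1/e$.

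\textbf{Main point to check.} The only step needing care is that $1-1/e$ genuinely lies on $\partial\varphi_c(\mathbb D)$, so that the bounds cannot be exceeded. The function $x\mapsto 1+xe^x$ is decreasing on $[-1,0]$, since its derivative is $(1+x)e^x\ge 0$ read in the other direction. Consequently the real points of $\varphi_c(\mathbb D)$ near the left end approach $1-1/e$ but do not go beyond it. Lemma~\ref{lemma5} also takes $1-1/e$ as the left endpoint of its admissible range for $a$. So $1-1/e$ is a boundary point, and points of the form $1-t$ with $t$ slightly larger than $1/e$ lie outside the domain; this is what makes both radii sharp.
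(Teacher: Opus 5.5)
Your proposal is correct and is essentially the paper's proof: Lemma~\ref{lemma5} at $a=1$ gives $\mathbb D(1;1/e)\subseteq\varphi_c(\mathbb D)$, and Theorem~\ref{main theorem} with $\delta=1-1/e$ gives the lower bounds, with the restrictions $M\le n(n+1)$ and $M\le n$ respectively that you rightly carry along. Sharpness comes from $f_0$ with $z_0^n=-\rho^n$ attaining $\varphi_c(-1)=1-1/e$. Two small points in your last paragraph: $x\mapsto 1+xe^x$ is increasing on $[-1,0]$, not decreasing. Also, the real-axis behaviour alone does not give $1-1/e\notin\varphi_c(\mathbb D)$; you additionally need univalence and real symmetry of $\varphi_c$, so that real values come only from real $z$. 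Alternatively, use that $\varphi_c(\partial\mathbb D)=\partial\varphi_c(\mathbb D)$ for a univalent $\varphi_c$ continuous on $\overline{\mathbb D}$.
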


\begin{proof}
	By Lemma~\ref{lemma5},
	$
	\mathbb D\!\left(1;1/e\right)\subset\varphi_c(\mathbb{D}).
	$
	Therefore, applying Theorem~\ref{main theorem} with $\delta=1-1/e$, we obtain the asserted lower
	bounds for the $\mathcal {ST}_c$- and $\mathcal {CV}_c$-radii.
	It remains to prove sharpness. Choose $z_0 \in \D$ with $|z_0|=\rho$ such that $z_0^n=-\rho^n$, consider the extremal function $f_0:\D \rightarrow \mathbb{C}$ defined by \eqref{eq:sharp-stt}.
	First, consider the
	$\mathcal {ST}_c$-radius. Let
	\[
	\rho
	=
	\left(
	\frac{n(n+1)}
	{M(1+en)}
	\right)^{1/n}
	<1,
	\]
	Since $	M\rho^n(1+en)
	=
	n(n+1), $	we obtain
	\[
	\frac{nM\rho^n}
	{n(n+1)-M\rho^n}
	=
	\frac1e.
	\]
	Using \eqref{eq:sharp-st} and evaluated at $z=z_0$, hence
	\[
	\frac{z_0f_0'(z_0)}{f_0(z_0)}
	=
	1-
	\frac{nM\rho^n}
	{n(n+1)-M\rho^n}
	=
	1-\frac1e
	=
	\varphi_c(-1)	\in
		\partial\varphi_c(\mathbb{D}).
	\]
	which proves that the
	$\mathcal {ST}_c$-radius is sharp.
	
	The sharpness of the $\mathcal {CV}_c$-radius is established similarly. Let
	\[
	\rho
	=
	\left(
	\frac{n}
	{M(1+en)}
	\right)^{1/n}
	<1.
	\]
	Since $
	M\rho^n(1+en)
	=
	n,$
	it follows that
	\[
	\frac{nM\rho^n}
	{n-M\rho^n}
	=
	\frac1e.
	\]
	Using \eqref{eq:sharp-cv} and evaluated at $z=z_0$, hence
	\[
	1+\frac{z_0f_0''(z_0)}{f_0'(z_0)}
	=
	1-
	\frac{nM\rho^n}
	{n-M\rho^n}
	=
	1-\frac1e
	=
	\varphi_c(-1)	\in
			\partial\varphi_c(\mathbb{D}).
	\]
	which proves that  the
	$\mathcal {CV}_c$-radius is also sharp.
\end{proof}

\begin{remark}
	For the particular class $\mathcal{Q}_1(1)$, corresponding to $n=1$ and $M=1$, the
	radius formulas in Corollary~\ref{corollary5} reduce to
	\[
	R_{\mathcal {ST}_c}(\mathcal Q_1(1))
	=
	\frac{2}{1+e}
	\approx
	0.537883,
	\quad \text{and}\quad
	R_{\mathcal {CV}_c}(\mathcal Q_1(1))
	=
	\frac{1}{1+e}
	\approx
	0.268941.
	\]
\end{remark}
	
		In 2021, Gupta \textit{et al.}~\cite{MR4303808} introduced the cardioid Ma--Minda classes corresponding to the function
	$
	\varphi_{car}(z)=1+z+{z^{2}}/{2}.
	$
	Specifically, $\mathcal{ST}_{car}=\mathcal{ST}(\varphi_{car})$ and $\mathcal{CV}_{car}=\mathcal{CV}(\varphi_{car})$. The image of the unit disk under $\varphi_{car}$ is the domain bounded by the cardioid
	$
	\left(4x^{2}+4y^{2}-8x-1\right)^{2}
	+
	4\left(4x^{2}+4y^{2}-12x+1\right)
	=0.
	$
	
		\begin{lemma}\label{lemma6}\cite[Lemma 2.2]{MR4303808}
			For $1/2<a< 5/2,\ \{w:|w-a|<r_a\}\subseteq \varphi_{car}(\mathbb{D})$, where $r_a$ is
			\begin{align*}
			r_a &=
			\begin{dcases}
			a-\frac{1}{2},\ & \ \frac{1}{2}<a\leq \frac{3}{2}\\
			\frac{5}{2}-a,\ & \frac{3}{2}\leq a<\frac{5}{2}.
			\end{dcases}
			\end{align*}
		\end{lemma}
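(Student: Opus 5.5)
The plan is to compute directly the distance from the centre $a$ to the boundary curve of $\varphi_{car}(\mathbb D)$, and then use the fact that an open disk centred at an interior point, with radius equal to the distance to the boundary, lies in the domain.

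\textbf{Step 1: $a$ lies in the domain.} Since $\varphi_{car}$ is univalent on $\overline{\mathbb D}$, $\varphi_{car}(\mathbb D)$ is the interior of the Jordan curve
\[
w(\theta)=1+e^{i\theta}+\tfrac12 e^{2i\theta},\qquad \theta\in[-\pi,\pi].
\]
On the real diameter, $x\mapsto 1+x+x^2/2$ is increasing on $(-1,1)$ and maps it onto $(1/2,5/2)$. Hence every $a$ with $1/2<a<5/2$ lies in $\varphi_{car}(\mathbb D)$. It therefore suffices to show that
\[
\min_\theta |w(\theta)-a| = r_a=\min\{a-1/2,\ 5/2-a\}.
\]
This expression agrees with the two-case formula for $r_a$, whose cases switch at $a=3/2$.

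\textbf{Step 2: distance as a quadratic in $t=\cos\theta$.} Put $b=1-a$. Expanding $|b+e^{i\theta}+\tfrac12e^{2i\theta}|^2$, the three cross terms give $2b\cos\theta$, $b\cos 2\theta$ and $\cos\theta$. With $\cos 2\theta=2t^2-1$ this becomes
\[
|w(\theta)-a|^2 = Q(t):=2b\,t^2+(2b+1)\,t+b^2-b+\tfrac54,\qquad t\in[-1,1].
\]
At the endpoints,
\[
Q(-1)=(b-\tfrac12)^2=(a-\tfrac12)^2,\qquad Q(1)=(b+\tfrac32)^2=(\tfrac52-a)^2 .
\]

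\textbf{Step 3: the minimum of $Q$ on $[-1,1]$ is attained at an endpoint.} This is the main point to check.
\begin{enumerate}
\item If $a\ge 1$, then $b\le0$, so $Q$ is concave or linear and its minimum on $[-1,1]$ occurs at $t=\pm1$.
\item If $1/2<a<1$, then $0<b<1/2$ and $Q$ is convex, with vertex at $t_0=-(2b+1)/(4b)$. The condition $t_0\ge -1$ is equivalent to $b\ge 1/2$, which fails here. So $t_0<-1$, $Q$ is increasing on $[-1,1]$, and its minimum is $Q(-1)$.
\end{enumerate}
In either case
\[
\min_{[-1,1]}Q=\min\{(a-\tfrac12)^2,(\tfrac52-a)^2\}.
\]

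\textbf{Conclusion.} By Step 3, the distance from $a$ to $\partial\varphi_{car}(\mathbb D)$ equals $r_a$. Since $a$ is an interior point by Step 1, the open disk $\{w:|w-a|<r_a\}$ contains no boundary point and is connected, so it lies in $\varphi_{car}(\mathbb D)$. This proves the lemma.
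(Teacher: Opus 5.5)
Your proof is correct and complete. The expansion to $Q(t)=2bt^2+(2b+1)t+b^2-b+\tfrac54$, the endpoint values $(a-\tfrac12)^2$ and $(\tfrac52-a)^2$, and the case split (concave or linear for $b\le 0$, convex with vertex $t_0<-1$ exactly when $0<b<\tfrac12$) all check out. This direct minimization of the squared distance from $a$ to $\varphi_{car}(e^{i\theta})$, written as a function of $\cos\theta$, is the standard argument behind such disk-inclusion lemmas; the paper gives no proof of its own and only cites Gupta, Nagpal and Ravichandran.

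The one assertion you leave unjustified is the injectivity of $\varphi_{car}$ on $\overline{\mathbb{D}}$. It follows from $\varphi_{car}(z_1)-\varphi_{car}(z_2)=(z_1-z_2)\bigl(1+\tfrac{z_1+z_2}{2}\bigr)$, since $z_1+z_2=-2$ with $|z_i|\le 1$ forces $z_1=z_2=-1$. In any case, your final connectedness step only needs $\partial\varphi_{car}(\mathbb{D})\subseteq\varphi_{car}(\partial\mathbb{D})$, which holds by compactness and the open mapping theorem.
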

		
	\begin{corollary}\label{corollary6}
	Let
	\[
	R_1
	=
	\min\left\{
	1,
	\left(
	\frac{n(n+1)}
	{M(2n+1)}
	\right)^{1/n}
	\right\},
\quad \text{and }\quad
R_2
	=
	\min\left\{
	1,
	\left(
	\frac{n}
	{M(2n+1)}
	\right)^{1/n}
	\right\}.
	\]The $\mathcal {ST}_{car}$-radius of the class $\Q$ is  $R_1$
	and the $\mathcal {CV}_{car}$-radius $R_2$.
	\end{corollary}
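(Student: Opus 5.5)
Following the template of the previous corollaries, the plan is to apply Lemma~\ref{lemma6} with $a=1$. Since $1/2<1\le 3/2$, it gives $r_1=1-1/2=1/2$, so $\mathbb D(1;1/2)\subseteq\varphi_{car}(\mathbb D)$. We then invoke Theorem~\ref{main theorem} with $\delta=1/2$. The constants simplify as
\[
\frac{(1-\delta)n(n+1)}{M(n+1-\delta)}=\frac{\tfrac12 n(n+1)}{M(n+\tfrac12)}=\frac{n(n+1)}{M(2n+1)},
\qquad
\frac{(1-\delta)n}{M(n+1-\delta)}=\frac{n}{M(2n+1)}.
\]
This yields the lower bounds $R_{\mathcal{ST}_{car}}(\Q)\ge R_1$ when $0<M\le n(n+1)$, and $R_{\mathcal{CV}_{car}}(\Q)\ge R_2$ when $0<M\le n$.

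For sharpness we use the extremal function $f_0$ of \eqref{eq:sharp-stt}, which lies in $\Q$ since $|f_0''(z)|=M|z|^{n-1}\le M$. Suppose $\rho=\bigl(n(n+1)/(M(2n+1))\bigr)^{1/n}<1$, and pick $z_0$ with $|z_0|=\rho$ and $z_0^n=-\rho^n$. The defining relation $M\rho^n(2n+1)=n(n+1)$ rearranges to
\[
\frac{nM\rho^n}{n(n+1)-M\rho^n}=\frac12 .
\]
By \eqref{eq:sharp-st} we then get $z_0f_0'(z_0)/f_0(z_0)=1-\tfrac12=\tfrac12$.

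It remains to check that $\tfrac12=\varphi_{car}(-1)=1-1+\tfrac12$ lies on $\partial\varphi_{car}(\mathbb D)$. This holds because $\varphi_{car}$ is univalent on $\overline{\mathbb D}$ with real-axis leftmost boundary point $1/2$. Alternatively, substituting $x=1/2$, $y=0$ into the cardioid equation gives $(1-4-1)^2+4(1-6+1)=16-16=0$. So the quotient leaves $\varphi_{car}(\mathbb D)$ for $|z|>\rho$ near $z_0$ along the ray, and the radius cannot be enlarged.

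The convexity case is identical. Take $\rho=\bigl(n/(M(2n+1))\bigr)^{1/n}<1$, so that $nM\rho^n/(n-M\rho^n)=1/2$. Then \eqref{eq:sharp-cv} at $z_0$ gives $1+z_0f_0''(z_0)/f_0'(z_0)=1/2\in\partial\varphi_{car}(\mathbb D)$.

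There is no real obstacle here; the only points needing care are the algebraic simplification with $\delta=1/2$ and confirming that $1/2$ is a genuine boundary point, so that the disk $\mathbb D(1;1/2)$ is the maximal one touching at $\varphi_{car}(-1)$.
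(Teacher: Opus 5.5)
Your proposal is correct and follows the paper's proof essentially verbatim. Lemma~\ref{lemma6} gives $\mathbb D(1;1/2)\subseteq\varphi_{car}(\mathbb D)$, Theorem~\ref{main theorem} with $\delta=1/2$ gives the lower bounds, and $f_0$ evaluated at $z_0$ with $z_0^n=-\rho^n$ attains the boundary value $\varphi_{car}(-1)=1/2$. Your extra checks that $f_0\in\Q$ and that $1/2$ lies on the cardioid are correct, and the paper leaves them implicit.
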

	
	\begin{proof}
	By Lemma~\ref{lemma6},
	$
	\mathbb D\!\left(1;1/2\right)\subset\varphi_{car}(\mathbb{D}).
	$
	Hence, Theorem~\ref{main theorem} with
	\(
	\delta=1/2
	\)
	immediately yields the asserted lower bounds for the
	$\mathcal {ST}_{car}$- and $\mathcal {CV}_{car}$-radii.
		It remains to establish the sharpness of these bounds.
	Choose $z_0 \in \D$ with $|z_0|=\rho$ such that $z_0^n=-\rho^n$, consider the extremal function $f_0:\D \rightarrow \mathbb{C}$ defined by \eqref{eq:sharp-stt}.
			 We first consider the
	$\mathcal {ST}_{car}$-radius. Let
	\[
	\rho
	=
	\left(
	\frac{n(n+1)}
	{M(2n+1)}
	\right)^{1/n}
	<1,
	\]
	Using the definition of $\rho$, we have $M\rho^n(2n+1)
	=
	n(n+1),$
	and consequently,
	\[
	\frac{nM\rho^n}
	{n(n+1)-M\rho^n}
	=
	\frac12.
	\]
	Evaluating \eqref{eq:sharp-st} at $z=z_0$ gives
	\[
	\frac{z_0f_0'(z_0)}{f_0(z_0)}
	=
	1-
	\frac{nM\rho^n}
	{n(n+1)-M\rho^n}
	=
	\frac12
	=
	\varphi_{car}(-1)	\in
		\partial\varphi_{car}(\mathbb{D}).
	\]
	this showing that  the
	$\mathcal {ST}_{car}$-radius is  sharp.
	
	The proof for the $\mathcal {CV}_{car}$-radius follows in the same manner. Let
	\[
	\rho
	=
	\left(
	\frac{n}
	{M(2n+1)}
	\right)^{1/n}
	<1.
	\]
	Using the definition of $\rho$, we obtain $
	M\rho^n(2n+1)
	=
	n,$
	and hence
	\[
	\frac{nM\rho^n}
	{n-M\rho^n}
	=
	\frac12.
	\]
	Evaluating \eqref{eq:sharp-cv} at $z=z_0$,
	\[
	1+\frac{z_0f_0''(z_0)}{f_0'(z_0)}
	=
	1-
	\frac{nM\rho^n}
	{n-M\rho^n}
	=
	\frac12
	=
	\varphi_{car}(-1)\in
		\partial\varphi_{car}(\mathbb{D}).
	\]
	showing that
	$\mathcal {CV}_{car}$-radius is sharp.
	\end{proof}
	\begin{remark}
	For the particular class $\mathcal{Q}_1(1)$, corresponding to $n=1$ and $M=1$, the
	radius formulas in Corollary~\ref{corollary6} reduce to
	\[
	R_{\mathcal {ST}_{car}}(\mathcal{Q}_1(1))
	=
	\frac{2}{3}
	=
	0.666667,
\quad \text{	and }	\quad
	R_{\mathcal {CV}_{car}}(\mathcal{Q}_1(1))
	=
	\frac{1}{3}
	\approx
	0.333333.
	\]
	\end{remark}
	\begin{figure}[!htbp]
		\centering
		\begin{subfigure}{0.48\textwidth}
		\centering
		\begin{tikzpicture}
			\begin{axis} [
			    axis lines=center,
			    width=9cm,
			    height=9cm,
			    xmin=-0.3, xmax=4.5,
			    ymin=-2.5, ymax=2.5,
			    xtick={-2.0,-1.0,1.0,2.0,3.0,4.0},
			    ytick={-3.0,-2.0,-1.0,1.0,2.0,3.0},
			    thick,
			    scale=0.75
			]
		   \addplot[
		   			domain=0:360,
		   			samples=400,
		   			smooth,
		   			color=black,
		   			line width=1pt
		   		]
		   		({0.922+ 0.289*cos(x)},
		   		{0.289*sin(x)});
		
		\addplot[
					domain=0:360,
					samples=400,
					smooth,
				color=black,
					line width=1pt
				]
				({1 + 0.36*cos(x)},
				{0.36*sin(x)});		
			\addplot[
			    domain=-180:180,
			    samples=400,
			    smooth,
			    color=black,
			    line width=1pt,
			]
			({1 + expx(x) * cos(x + deg(sin(x)))},
			 {expx(x) * sin(x + deg(sin(x)))});
			
			\end{axis}
		\end{tikzpicture}
		\caption{$\mathcal {ST}_c$-Radius}
		\end{subfigure}
		\begin{subfigure}{0.48\textwidth}
		\centering
	\begin{tikzpicture}
		\begin{axis} [
		    axis lines=center,
		    width=8cm,
		    height=8.5cm,
		    xmin=-0.3, xmax=3.0,
		    ymin=-1.8, ymax=1.8,
		    xtick={0.5,1.0,1.5,2.0,2.5},
		    ytick={-1.5,-1.0,-0.5,0.5,1.0,1.5},
		    thick,
		    scale=0.75
		]
		   \addplot[
		   			domain=0:360,
		   			samples=400,
		   			smooth,
		   			color=black,
		   			line width=1pt
		   		]
		   		({0.875 + 0.375*cos(x)},
		   		{0.375*sin(x)});
		
		\addplot[
					domain=0:360,
					samples=400,
					smooth,
				color=black,
					line width=1pt
				]
				({1 + 0.5*cos(x)},
				{0.5*sin(x)});	
		\addplot[
		    domain=-180:180,
		    samples=400,
		    smooth,
		    color=black,
		    line width=1pt,
		]
		({1 + cos(x) + 0.5 * cos(2*x)},
		 {sin(x) + 0.5 * sin(2*x)});
		
		\end{axis}
		\end{tikzpicture}
		\caption{$\mathcal{ST}_{car}$-Radius}
		\end{subfigure}
		\caption{Radii associated with $\varphi_c(z)=1+ze^{z}$ and  $\varphi_{car}(z)=1+z+{z^2}/{2}$ for the class $\Q$.}
		\label{p1f3}
			\end{figure}
	
	Masih \textit{et al.}~\cite{Masih} studied   Ma--Minda classes corresponding to the function
	$
	\varphi_{s}(z)=(1+sz)^2,\, 0<s\leq 1/\sqrt{2}.
	$
	In the present work, we consider the extremal case
	$
	\varphi_{lim}(z)=\left(1+{z}/{\sqrt{2}}\right)^2
	$, which corresponds to $s=1/\sqrt{2}$. Specifically, $\mathcal{ST}_{lim}=\mathcal{ST}(\varphi_{lim})$ and $\mathcal{CV}_{lim}=\mathcal{CV}(\varphi_{lim})$. We shall use the inclusion lemma established by Kanaga and Ravichandran~\cite{MR4478088} to derive the corresponding radius results(see also \cite{MR4386722}).

	\begin{lemma}\label{lemma7}\cite{MR4478088}
			For $(1-1/\sqrt{2})^2<a< (1+1\sqrt{2})^2,\ \{w:|w-a|<r_a\}\subseteq \varphi_{lim}(\mathbb{D})$, where $r_a$ is
			\begin{align*}
				r_a &=
				\begin{dcases}
					a-\left(1-\frac{1}{\sqrt{2}}\right)^2,\ & \ \left(1-\frac{1}{\sqrt{2}}\right)^2<a\leq \frac{3}{2}\\
					\left(1+\frac{1}{\sqrt{2}}\right)^2-a,\ & \frac{3}{2}\leq a<\left(1+\frac{1}{\sqrt{2}}\right)^2.
				\end{dcases}
			\end{align*}
		\end{lemma}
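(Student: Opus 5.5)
The plan is to compute the distance from the point $a$ to the boundary curve $\partial\varphi_{lim}(\mathbb{D})$ explicitly and minimize it. Since $\varphi_{lim}$ is univalent on $\overline{\mathbb{D}}$ (because $|z/\sqrt2|<1$ there, so $1+z/\sqrt2$ stays in a half-plane where squaring is injective), $\varphi_{lim}(\mathbb{D})$ is the interior of the Jordan curve $w(t)=(1+e^{it}/\sqrt2)^2$, $t\in[-\pi,\pi]$. The real interval $((1-1/\sqrt2)^2,(1+1/\sqrt2)^2)=(\varphi_{lim}(-1),\varphi_{lim}(1))$ lies inside this domain. Hence for such $a$, the largest disk centred at $a$ contained in $\varphi_{lim}(\mathbb{D})$ has radius $\min_t |w(t)-a|$, and it suffices to show that this minimum equals the stated $r_a$.

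\textbf{Step 1: reduce to a quadratic in $\cos t$.} Writing $c=\cos t$ and expanding,
\[
\RE w(t)=c^2+\sqrt2\,c+\tfrac12, \qquad \operatorname{Im} w(t)=\sin t\,(c+\sqrt2).
\]
Put $b=\tfrac12-a$. Then
\[
|w(t)-a|^2=(c^2+\sqrt2 c+b)^2+(1-c^2)(c+\sqrt2)^2 .
\]
The key observation is that the quartic and cubic terms cancel, leaving
\[
g(c)=2(1-a)c^2+2\sqrt2\left(\tfrac32-a\right)c+\left(a-\tfrac12\right)^2+2, \qquad c\in[-1,1].
\]
I would verify this algebra carefully, since the cancellation is what makes the whole argument elementary.

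\textbf{Step 2: minimize $g$ on $[-1,1]$ by cases on $a$.}
\begin{itemize}
\item If $a<1$, then $g$ is convex. Its vertex $c_0=-\sqrt2(3/2-a)/(2(1-a))$ satisfies $c_0\le -1$: this is equivalent to $a(2-\sqrt2)\ge 2-3\sqrt2/2$, and the right-hand side is negative. So the minimum is attained at $c=-1$.
\item If $a\ge 1$, then $g$ is concave (linear when $a=1$), so the minimum is attained at an endpoint. Since $g(1)-g(-1)=4\sqrt2(3/2-a)$, the minimum is at $c=-1$ when $a\le 3/2$ and at $c=1$ when $a\ge 3/2$.
\end{itemize}
Together these give: the minimum is at $c=-1$ for $a\le 3/2$ and at $c=1$ for $a\ge 3/2$.

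\textbf{Step 3: identify the endpoint values.} The points $c=\pm1$ correspond to $t=0$ and $t=\pi$, where $w$ is real. So $g(-1)=\bigl(a-(1-1/\sqrt2)^2\bigr)^2$ and $g(1)=\bigl((1+1/\sqrt2)^2-a\bigr)^2$, which can also be checked directly from the formula for $g$. Taking square roots yields exactly the two branches of $r_a$, with the switch at $a=3/2$, and hence the inclusion $\{w:|w-a|<r_a\}\subseteq\varphi_{lim}(\mathbb{D})$.

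The main obstacle is the bookkeeping in Step 1 and the vertex-location check in Step 2 for $a<1$. Everything else is routine.
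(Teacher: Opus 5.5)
Your proof is correct. The computations all check out:
\begin{itemize}
\item The quartic and cubic terms do cancel, leaving $g(c)=2(1-a)c^2+2\sqrt2\,(\tfrac32-a)c+(a-\tfrac12)^2+2$.
\item For $a<1$ the vertex lies to the left of $-1$, because $a(2-\sqrt2)>0>2-\tfrac{3\sqrt2}{2}$.
\item $g(-1)$ and $g(1)$ are the squares of $a-(\tfrac32-\sqrt2)$ and $(\tfrac32+\sqrt2)-a$, respectively.
\end{itemize}
The preliminary facts are also justified. Univalence on $\overline{\mathbb{D}}$ holds because $\operatorname{Re}(1+z/\sqrt2)\ge 1-1/\sqrt2>0$. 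Since $a$ lies in the Jordan interior, the open disk whose radius is the distance to the boundary cannot leave the domain.

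The paper itself gives no proof of this lemma; it is quoted from \cite{MR4478088}. So there is no internal argument to compare against, and your direct boundary-distance computation is a self-contained replacement.

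Your argument in fact proves slightly more than stated: $r_a$ equals the distance from $a$ to $\partial\varphi_{lim}(\mathbb{D})$, so the disk is the largest one centred at $a$. At $a=1$ it gives $r_1=\sqrt{9/4-\sqrt2}=\sqrt2-\tfrac12$, which is the only instance used in Corollary~\ref{corollary7}. Your analysis also shows that the nearest boundary point to $1$ is the real point $\varphi_{lim}(-1)$. That is why the lower bound from Theorem~\ref{main theorem} is matched by the extremal function $f_0$ there.

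You also correctly read the upper endpoint $(1+1\sqrt2)^2$ in the statement as a typo for $(1+1/\sqrt2)^2$.
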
		
	\begin{corollary}\label{corollary7}
Let
	\[
	R_1
	=
	\min\left\{
	1,
	\left(
	\frac{(2\sqrt2-1)n(n+1)}
	{M(2n-1+2\sqrt2)}
	\right)^{1/n}
	\right\},\] and \[
R_2
	=
	\min\left\{
	1,
	\left(
	\frac{(2\sqrt2-1)n}
	{M(2n-1+2\sqrt2)}
	\right)^{1/n}
	\right\}.
	\]	The $\mathcal {ST}_{lim}$-radius of the class $\Q$ is $R_1$
	and the $\mathcal {CV}_{lim}$-radius $R_2$.	\end{corollary}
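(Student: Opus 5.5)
We follow the same template as Corollaries~\ref{corollary1}--\ref{corollary6}. Here $\delta=\varphi_{lim}(-1)=(1-1/\sqrt2)^2=\tfrac32-\sqrt2$, so
\[
1-\delta=\sqrt2-\tfrac12=\tfrac{2\sqrt2-1}{2},\qquad n+1-\delta=n-\tfrac12+\sqrt2=\tfrac{2n-1+2\sqrt2}{2}.
\]
The value $a=1$ lies in the first range $(1-1/\sqrt2)^2<a\le 3/2$ of Lemma~\ref{lemma7}. That lemma therefore gives $r_1=1-\delta$, and hence $\mathbb D(1;1-\delta)\subseteq\varphi_{lim}(\mathbb D)$ with $0<\delta<1$. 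Substituting into Theorem~\ref{main theorem}, the factors of $1/2$ cancel in both quotients. The radii of Theorem~\ref{main theorem} then become exactly the stated $R_1$ and $R_2$, which are therefore lower bounds for the $\mathcal{ST}_{lim}$- and $\mathcal{CV}_{lim}$-radii (for $0<M\le n(n+1)$ and $0<M\le n$ respectively).

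For sharpness we use the extremal function $f_0$ of \eqref{eq:sharp-stt}, which belongs to $\Q$ because $|f_0''(z)|=M|z|^{n-1}\le M$. First suppose
\[
\rho=\left(\frac{(2\sqrt2-1)n(n+1)}{M(2n-1+2\sqrt2)}\right)^{1/n}<1,
\]
and choose $z_0$ with $|z_0|=\rho$ and $z_0^n=-\rho^n$. The defining relation $M\rho^n(2n-1+2\sqrt2)=(2\sqrt2-1)n(n+1)$ rearranges to
\[
\frac{nM\rho^n}{n(n+1)-M\rho^n}=\frac{2\sqrt2-1}{2}=1-\delta.
\]
By \eqref{eq:sharp-st} this gives $z_0f_0'(z_0)/f_0(z_0)=\delta=\varphi_{lim}(-1)\in\partial\varphi_{lim}(\mathbb D)$.

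The convexity case is identical. We take $\rho=\bigl((2\sqrt2-1)n/(M(2n-1+2\sqrt2))\bigr)^{1/n}<1$, obtain $nM\rho^n/(n-M\rho^n)=1-\delta$, and apply \eqref{eq:sharp-cv}. When the bracketed quantity is $\ge1$ the radius equals $1$ and there is nothing to prove.

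The only point needing care is that $\varphi_{lim}(-1)$ is a genuine boundary point and that no larger disk works. Since $\varphi_{lim}$ is univalent on $\overline{\mathbb D}$ ($1+z/\sqrt2$ is nonvanishing there and squaring is injective on the right half-plane region $1+\overline{\mathbb D}/\sqrt2$), $\varphi_{lim}(-1)$ lies on $\partial\varphi_{lim}(\mathbb D)$. Moreover, $f_0$ attains this value at $|z_0|=\rho$, so for $r>\rho$ the image of $|z|<r$ leaves $\varphi_{lim}(\mathbb D)$. The remaining algebraic simplifications are routine.
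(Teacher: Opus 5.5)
Your proposal is correct and follows essentially the same route as the paper. Lemma~\ref{lemma7} at $a=1$ gives $\mathbb D(1;\sqrt2-\tfrac12)\subseteq\varphi_{lim}(\mathbb D)$, Theorem~\ref{main theorem} with $\delta=(1-1/\sqrt2)^2$ gives the lower bounds, and the extremal function $f_0$ at $z_0^n=-\rho^n$ attains $\varphi_{lim}(-1)=\tfrac32-\sqrt2$; your extra checks (that $f_0\in\Q$ and that $\varphi_{lim}$ is univalent on $\overline{\mathbb D}$) only make explicit what the paper leaves implicit.
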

	
	\begin{proof}
	From Lemma~\ref{lemma7},
	$
	\mathbb D\!\left(1;\sqrt2-1/2\right)\subset\varphi_{lim}(\mathbb{D})
	$.
	Applying Theorem~\ref{main theorem} with
	\(
	\delta=\left(1-1/{\sqrt2}\right)^2
	\)
	gives the asserted lower bounds for the
	$\mathcal {ST}_{lim}$- and $\mathcal {CV}_{lim}$-radii.
	We next verify that these bounds are sharp. 	Choose $z_0 \in \D$ with $|z_0|=\rho$ such that $z_0^n=-\rho^n$, consider the extremal function $f_0:\D \rightarrow \mathbb{C}$ defined by
\eqref{eq:sharp-stt}. Consider the
	$\mathcal {ST}_{lim}$-radius. Let
	\[
	\rho
	=
	\left(
	\frac{(2\sqrt2-1)n(n+1)}
	{M(2n-1+2\sqrt2)}
	\right)^{1/n}
	<1,
	\]
	From the definition of $\rho$, it follows $
	M\rho^n(2n-1+2\sqrt2)
	=
	(2\sqrt2-1)n(n+1),$
	which implies
	\[
	\frac{nM\rho^n}
	{n(n+1)-M\rho^n}
	=
	\sqrt2-\frac12.
	\]
	By substituting $z=z_0$ into \eqref{eq:sharp-st}, we obtain
	\[
	\frac{z_0f_0'(z_0)}{f_0(z_0)}
	=
	1-
	\frac{nM\rho^n}
	{n(n+1)-M\rho^n}
	=
	\frac32-\sqrt2
	=
	\varphi_{lim}(-1)\in
		\partial\varphi_{lim}(\mathbb{D}).
	\]
	which shows that the
	$\mathcal {ST}_{lim}$-radius is  sharp.
	
	An analogous argument establishes the sharpness of the
	$\mathcal {CV}_{lim}$-radius. Let
	\[
	\rho
	=
	\left(
	\frac{(2\sqrt2-1)n}
	{M(2n-1+2\sqrt2)}
	\right)^{1/n}
	<1.
	\]
	Again, by the definition of $\rho$, we have $
	M\rho^n(2n-1+2\sqrt2)
	=
	(2\sqrt2-1)n,$
	and therefore
	\[
	\frac{nM\rho^n}
	{n-M\rho^n}
	=
	\sqrt2-\frac12.
	\]
	Evaluating \eqref{eq:sharp-cv} at $z=z_0$ gives
	\[
	1+\frac{z_0f_0''(z_0)}{f_0'(z_0)}
	=
	1-
	\frac{nM\rho^n}
	{n-M\rho^n}
	=
	\frac32-\sqrt2
	=
	\varphi_{lim}(-1)\in
		\partial\varphi_{lim}(\mathbb{D}).
	\]
which proves that the
	$\mathcal {CV}_{lim}$-radius   is sharp.
	\end{proof}

	\begin{remark}
	For the particular class $\mathcal{Q}_1(1)$, corresponding to $n=1$ and $M=1$, the
	radius formulas in Corollary~\ref{corollary7} reduce to
	\[
	R_{\mathcal {ST}_{lim}}(\mathcal{Q}_1(1))
	=
	\frac{4\sqrt2-2}{2\sqrt2+1}
	\approx
	0.955185,
\] and \[
	R_{\mathcal {CV}_{lim}}(\mathcal{Q}_1(1))
	=
	\frac{2\sqrt2-1}{2\sqrt2+1}
	\approx
	0.477592.
	\]
	\end{remark}

	Cho \textit{et al.}~\cite{MR3913990} introduced the Ma--Minda starlike class associated with the sine function
	$
	\varphi_{\sin}(z)=1+\sin z.
	$
	Specifically, $\mathcal{ST}_{\sin}=\mathcal{ST}(\varphi_{\sin})$ and $\mathcal{CV}_{\sin}=\mathcal{CV}(\varphi_{\sin})$.

		\begin{lemma}\label{lemma8}\cite[Lemma 3.3]{MR3913990}
			For $1-\sin 1\le a \le 1+\sin1$, If $ r_{a}=\sin 1-\left| a-1\right|$ then
			\[ \left\{w:\left|w-a\right|<r_{a}\right\} \subseteq \varphi_{\sin}(\mathbb{D})\]
		\end{lemma}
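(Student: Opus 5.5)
The plan is to describe $\varphi_{\sin}(\mathbb{D})$ explicitly and show directly that every point of the disk $|w-a|<r_a$ lies in it. Write $w=1+\sin z$ and set $\Omega=\{1+\sin z: z\in\mathbb{D}\}$. Since $\sin$ is univalent on $\mathbb{D}$ (it is univalent on the strip $|\RE z|<\pi/2$), $\Omega$ is a Jordan domain bounded by the curve $\gamma(\theta)=1+\sin e^{i\theta}$, and it is symmetric about the real axis. It also meets the real axis exactly in the interval $(1-\sin 1,1+\sin 1)$, because $\sin$ is real on $\mathbb{D}$ only on $(-1,1)$.

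The key quantity is the distance from the center $a$ to the boundary curve. I would show that for real $a\in[1-\sin1,1+\sin1]$,
\[
\min_{\theta}|\gamma(\theta)-a|=\sin 1-|a-1|.
\]
This gives the inclusion, since an open disk centered at an interior point whose radius does not exceed the distance to $\partial\Omega$ lies in $\Omega$. The value $\sin1-|a-1|$ is attained at $\theta=0$ or $\theta=\pi$, where $\gamma=1\pm\sin 1$. By the triangle inequality $|\gamma(\theta)-a|\ge|\gamma(\theta)-1|-|a-1|$, so it is enough to prove
\[
|\sin e^{i\theta}|\ge \sin 1\quad\text{for all }\theta .
\]

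To prove this, write $e^{i\theta}=x+iy$ with $x^2+y^2=1$ and use $|\sin(x+iy)|^2=\sin^2x+\sinh^2y$. We need $\sin^2x+\sinh^2 y\ge\sin^21$ whenever $x^2+y^2=1$. Put $t=y^2\in[0,1]$ and $F(t)=\sin^2\sqrt{1-t}+\sinh^2\sqrt t$, so $F(0)=\sin^21$. Differentiating in $t$ gives
\[
F'(t)=\frac{\sinh(2\sqrt t)}{2\sqrt t}-\frac{\sin(2\sqrt{1-t})}{2\sqrt{1-t}}.
\]
This is nonnegative, since $\sinh u/u\ge1\ge\sin v/v$. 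Hence $F$ is nondecreasing and $F(t)\ge F(0)$.

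The main obstacle is rigorously justifying that $\Omega$ is the interior region of $\gamma$ and that $a$ lies inside it. Univalence of $\sin$ on $\mathbb{D}$ together with the argument principle handles this. Once $\partial\Omega=\gamma$ is known, the bound $|\gamma-a|\ge r_a$ on the boundary, combined with the connectedness of the disk and the fact that $a\in\Omega$, yields $\{w:|w-a|<r_a\}\subseteq\Omega=\varphi_{\sin}(\mathbb{D})$.
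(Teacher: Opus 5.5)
Your proof is correct and complete. The paper itself gives no argument for this lemma; it is quoted from \cite[Lemma 3.3]{MR3913990}. So there is no in-paper proof to compare against, and your write-up works as a self-contained justification.

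First, your triangle-inequality step shows that the lemma is really just the case $a=1$. Since $\mathbb{D}(a;r_a)\subseteq\mathbb{D}(1;\sin 1)$ whenever $r_a=\sin1-|a-1|$, everything reduces to $\mathbb{D}(1;\sin1)\subseteq\varphi_{\sin}(\mathbb{D})$. This is also the only instance the paper uses, in Corollary~\ref{corollary8}.

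Second, the step you call the main obstacle needs less machinery than you invoke. For any $\varphi$ continuous on $\overline{\mathbb{D}}$ with $\varphi(\mathbb{D})$ open, a compactness argument gives $\partial\varphi(\mathbb{D})\subseteq\varphi(\partial\mathbb{D})$. That inclusion, together with connectedness of $\mathbb{D}(a;r_a)$ and $a\in\varphi_{\sin}(\mathbb{D})$ for $|a-1|<\sin1$, already yields the result. Univalence, the Jordan-domain description and the argument principle are not needed, and for $a=1\pm\sin1$ the disk is empty.

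Your monotonicity argument for $F$ is fine. A shorter alternative combines $\sin|x|\ge|x|\sin1$ for $|x|\le1$ (concavity of $\sin$ on $[0,1]$) with $\sinh^2y\ge y^2\ge y^2\sin^21$. Together these give $\sin^2x+\sinh^2y\ge(x^2+y^2)\sin^21=\sin^21$ on the unit circle.
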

	\begin{corollary}\label{corollary8}
Let
	\[
R_1
	=
	\min\left\{
	1,
	\left(
	\frac{(\sin 1)n(n+1)}
	{M(n+\sin 1)}
	\right)^{1/n}
	\right\},
\] and \[
	R_2
	=
	\min\left\{
	1,
	\left(
	\frac{(\sin 1)n}
	{M(n+\sin 1)}
	\right)^{1/n}
	\right\}.
	\]The $ \mathcal {ST}_{\sin}$-radius of the class $\Q$ is $R_1$
	and the $\mathcal {CV}_{\sin}$-radius is $R_2$.
	\end{corollary}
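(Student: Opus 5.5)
The argument follows the same template as Corollaries~\ref{corollary1}--\ref{corollary7}: first a lower bound from Theorem~\ref{main theorem}, then sharpness via the extremal function \eqref{eq:sharp-stt}.

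\emph{Lower bound.} Take $a=1$ in Lemma~\ref{lemma8}. This gives $r_1=\sin 1$, so
\[
\mathbb D(1;\sin 1)\subseteq\varphi_{\sin}(\mathbb D).
\]
Put $\delta=1-\sin 1$; since $0<\sin 1<1$ we have $0<\delta<1$, so Theorem~\ref{main theorem} applies. We have $1-\delta=\sin 1$ and $n+1-\delta=n+\sin 1$, which turns the quantities $R_1,R_2$ of Theorem~\ref{main theorem} into exactly those in the statement. Hence $R_{\mathcal{ST}_{\sin}}(\Q)\ge R_1$ for $0<M\le n(n+1)$ and $R_{\mathcal{CV}_{\sin}}(\Q)\ge R_2$ for $0<M\le n$.

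\emph{Sharpness for starlikeness.} Assume
\[
\rho=\left(\frac{(\sin 1)n(n+1)}{M(n+\sin 1)}\right)^{1/n}<1,
\]
and choose $z_0$ with $|z_0|=\rho$ and $z_0^n=-\rho^n$. The defining relation $M\rho^n(n+\sin 1)=(\sin 1)n(n+1)$ rearranges to
\[
\frac{nM\rho^n}{n(n+1)-M\rho^n}=\sin 1 .
\]
Substituting into \eqref{eq:sharp-st} gives
\[
\frac{z_0f_0'(z_0)}{f_0(z_0)}=1-\sin 1=\varphi_{\sin}(-1).
\]

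\emph{Sharpness for convexity.} With $\rho=\left((\sin 1)n/(M(n+\sin 1))\right)^{1/n}<1$ and the same choice of $z_0$, the analogous rearrangement gives $nM\rho^n/(n-M\rho^n)=\sin 1$. Then \eqref{eq:sharp-cv} yields
\[
1+\frac{z_0f_0''(z_0)}{f_0'(z_0)}=1-\sin 1 .
\]

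\emph{Main obstacle.} It remains to confirm that $1-\sin 1=\varphi_{\sin}(-1)$ lies on $\partial\varphi_{\sin}(\mathbb D)$ rather than in the interior. Since $\varphi_{\sin}$ is univalent on a neighbourhood of $\overline{\mathbb D}$, the point $\varphi_{\sin}(-1)$ is the image of a boundary point and so lies on the boundary curve. Moreover, for real $x\in(-1,1)$ we have $\varphi_{\sin}(x)=1+\sin x>1-\sin 1$, so $1-\sin 1$ is the leftmost point of the domain on the real axis. Consequently no disk centred at $1$ of radius larger than $\sin 1$ fits inside $\varphi_{\sin}(\mathbb D)$, and for $|z|$ slightly larger than $\rho$ the value at $z=te^{i\pi/n}$, $t>\rho$, falls to the left of $1-\sin 1$ and hence outside the domain. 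This shows that the radii cannot be enlarged.
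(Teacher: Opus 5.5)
Your proposal is correct and follows the paper's proof essentially step for step: Lemma~\ref{lemma8} at $a=1$, then Theorem~\ref{main theorem} with $\delta=1-\sin 1$, then the extremal function $f_0$ evaluated at $z_0^n=-\rho^n$. Your extra justification that $1-\sin 1\in\partial\varphi_{\sin}(\mathbb D)$, via univalence of $1+\sin z$ on a neighbourhood of $\overline{\mathbb D}$, fills in a point the paper only asserts. That boundary fact already gives sharpness on its own, and your ``leftmost real point'' remark also tacitly relies on univalence together with the real-coefficient symmetry of $\varphi_{\sin}$.
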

	
	\begin{proof}
	By Lemma~\ref{lemma8},
	$
	\mathbb D\!\left(1;\sin1\right)\subset\varphi_{\sin}(\mathbb{D})
	$.
	Hence, applying Theorem~\ref{main theorem} with
	\(
	\delta=1-\sin1
	\)
	yields the asserted lower bounds for the
	$\mathcal {ST}_{\sin}$- and $\mathcal {CV}_{\sin}$-radii.
 It only remains to verify the sharpness of these bounds. Choose $z_0\in \D$ satisfying
			$
			|z_0|=\rho,
			$ where $
			z_0^n=-\rho^n.
			$ Consider the extremal function $f_0:\D \rightarrow \mathbb C$ defined by
	\eqref{eq:sharp-stt}. We first deal with the
	$\mathcal {ST}_{\sin}$-radius. Let
	\[
	\rho
	=
	\left(
	\frac{(\sin1)n(n+1)}
	{M(n+\sin1)}
	\right)^{1/n}
	<1,
	\]
	we obtain $
		M\rho^n(n+\sin1)
		=
		(\sin1)n(n+1),$
		and hence
		\[
		\frac{nM\rho^n}
		{n(n+1)-M\rho^n}
		=
		\sin1.
		\]
	Substitute $z=z_0$ in \eqref{eq:sharp-st}
	\[
	\frac{z_0f_0'(z_0)}{f_0(z_0)}
	=
	1-
		\frac{nM\rho^n}
		{n(n+1)-M\rho^n}
	=
	1-\sin1
	=
	\varphi_{\sin}(-1)\in
		\partial\varphi_{\sin}(\mathbb{D}).
	\]
	which establishes that the
	$\mathcal {ST}_{\sin}$-radius is sharp.
	
	The argument for the
	$\mathcal {CV}_{\sin}$-radius is entirely similar. Let
	\[
	\rho
	=
	\left(
	\frac{(\sin1)n}
	{M(n+\sin1)}
	\right)^{1/n}
	<1.
	\]
	we obtain $
	M\rho^n(n+\sin1)
	=
	(\sin1)n, $
	so that
	\[
	\frac{nM\rho^n}
	{n-M\rho^n}
	=
	\sin1.
	\]
	Substituting $z=z_0$ into \eqref{eq:sharp-cv}, we obtain
	\[
	1+\frac{z_0f_0''(z_0)}{f_0'(z_0)}
	=
	1-
	\frac{nM\rho^n}
	{n-M\rho^n}
	=
	1-\sin1
	=
	\varphi_{\sin}(-1)\in
		\partial\varphi_{\sin}(\mathbb{D}).
	\]
	which establishes the sharpness of  the
	$\mathcal{CV}_{\sin}$-radius.
	\end{proof}
	
	\begin{remark}
	For the particular class $\mathcal{Q}_1(1)$, corresponding to $n=1$ and $M=1$, the
	radius formulas in Corollary~\ref{corollary8} reduce to
	\[
	R_{\mathcal {ST}_{\sin}}(\mathcal{Q}_1(1))
	=
	\frac{2\sin 1}{1+\sin 1}
	\approx
	0.913912,
\] and \[
	R_{\mathcal {CV}_{\sin}}(\mathcal{Q}_1(1))
	=
	\frac{\sin 1}{1+\sin 1}
	\approx
	0.456956.
	\]
	\end{remark}
	\begin{figure}[!htbp]
		\centering
		\begin{subfigure}{0.48\textwidth}
		\centering
		
\begin{tikzpicture}
		\begin{axis} [
		    axis lines=center,
		    width=7.2cm,
		    height=8.5cm,
		    xmin=-0.3, xmax=3.2,
		    ymin=-2.2, ymax=2.2,
		    xtick={1.0,2.0,3.0},
		    ytick={-2.0,-1.0,1.0,2.0},
		    thick,
		    scale=0.75
		]
		   \addplot[
		   			domain=0:360,
		   			samples=400,
		   			smooth,
		   			color=black,
		   			line width=1pt
		   		]
		   		({0.704 + 0.618*cos(x)},   		{0.618*sin(x)});
		
		\addplot[
					domain=0:360,
					samples=400,
					smooth,
				color=black,
					line width=1pt
				]
				({1 + 0.914214*cos(x)},
				{0.914214*sin(x)});	
		\addplot[
		    domain=-180:180,
		    samples=400,
		    smooth,
		    color=black,
		    line width=1pt,
		]
		({1 + sqrt(2) * cos(x) + 0.5 * cos(2*x)},
		 {sqrt(2) * sin(x) + 0.5 * sin(2*x)});
		
		\end{axis}
		\end{tikzpicture}
		\caption{$\mathcal{ST}_{lim}$-Radius}
		\end{subfigure}
		\begin{subfigure}{0.48\textwidth}
		\centering
		
	\begin{tikzpicture}
		\begin{axis} [
		    axis lines=center,
		    width=8.2cm,
		    height=8.5cm,
		    xmin=-0.3, xmax=2.5,
		    ymin=-1.5, ymax=1.5,
		    xtick={0.5,1.0,1.5,2.0},
		    ytick={-1.0,-0.5,0.5,1.0},
		    thick,
		    scale=0.75
		]
		   \addplot[
		   			domain=0:360,
		   			samples=400,
		   			smooth,
		   			color=black,
		   			line width=1pt
		   		]
		   		({0.736 + 0.577*cos(x)},
		   		{0.577*sin(x)});
		
		\addplot[
					domain=0:360,
					samples=400,
					smooth,
				color=black,
					line width=1pt
				]
				({1 + 0.841471*cos(x)},
				{0.841471*sin(x)});	
		\addplot[
		    domain=-180:180,
		    samples=400,
		    smooth,
		    color=black,
		    line width=1pt,
		]
		({1 + sin(deg(cos(x))) * cosh(sin(x))},
		 {cos(deg(cos(x))) * sinh(sin(x))});
		
		\end{axis}
		\end{tikzpicture}
		\caption{$\mathcal{ST}_{\sin}$-Radius}
		\end{subfigure}
		\caption{Radii associated with $\varphi_{lim}(z)=(1+z/\sqrt2)^2$  and $\varphi_{\sin}(z)=1+\sin z$ for the class $\Q$.}
		\label{p1f4}
		\end{figure}
	In 2021, Wani and Swaminathan~\cite{MR4190740} introduced the Ma--Minda classes associated with the nephroid domain
	$
	\varphi_{Ne}(z)=1+z-{z^{3}}/{3}.
	$
	Specifically, $\mathcal{ST}_{Ne}=\mathcal{ST}(\varphi_{Ne})$ and $\mathcal{CV}_{Ne}=\mathcal{CV}(\varphi_{Ne})$. The image of the unit disk under $\varphi_{Ne}$ is the interior of the nephroid bounded by the two-cusped curve
	\[
	\left((u-1)^2+v^2-\frac{4}{9}\right)^3-\frac{4v^2}{3}=0.
	\]
	
		\begin{lemma}\label{lemma9}\cite[Lemma 2.2]{MR4190740}
			For $1/3<a<5/3,\ \{w:|w-a|<r_a\}\subseteq \varphi_{Ne}(\mathbb{D})$, where $r_a$ is
			\begin{align*}
				r_a &=
				\begin{dcases}
					a-\frac{1}{3},\ & \ \frac{1}{3}<a\leq 1,\\
					\frac{5}{3}-a,\ & \ 1\leq a<\frac{5}{3}.
				\end{dcases}
			\end{align*}
		\end{lemma}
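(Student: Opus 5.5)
The plan is to compute the distance from the real point $a$ to the boundary curve $\partial\varphi_{Ne}(\mathbb{D})$ explicitly and show that its minimum equals $r_a$. Since $a$ lies in the domain (the real segment $(1/3,5/3)=(\varphi_{Ne}(-1),\varphi_{Ne}(1))$ is contained in $\varphi_{Ne}(\mathbb{D})$), the open disk of radius $\operatorname{dist}(a,\partial\varphi_{Ne}(\mathbb{D}))$ is then contained in the domain.

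\textbf{Parametrize and simplify.} Write the boundary as $w(t)=1+e^{it}-e^{3it}/3$ and put $c=\cos t\in[-1,1]$. Using $\cos 3t=4c^3-3c$ and $\sin 3t=3\sin t-4\sin^3 t$, the boundary point is
\[
u-1=2c-\tfrac43c^3, \qquad v=\tfrac43\sin^3 t .
\]
Also
\[
|w-1|^2=\bigl|e^{it}-e^{3it}/3\bigr|^2=\tfrac{10}{9}-\tfrac23\cos 2t=\tfrac{16}{9}-\tfrac43c^2 .
\]
With $b=1-a$, the squared distance to the boundary becomes the polynomial
\[
D(c)=b^2+\tfrac{16}{9}+4bc-\tfrac83 bc^3-\tfrac43 c^2 .
\]
The symmetry $a\mapsto 2-a$, $c\mapsto -c$ (equivalently $b\mapsto -b$) maps the case $1\le a<5/3$ to the case $1/3<a\le 1$. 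So it suffices to treat $0\le b<2/3$ and to show $\min_{[-1,1]}D=(a-1/3)^2$.

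\textbf{Key step: minimize $D$.} At $c=-1$ we get $D(-1)=b^2-\tfrac43 b+\tfrac49=(2/3-b)^2=(a-1/3)^2$. Next factor the difference $D(c)-D(-1)$ through $c+1$, using $c^3+1=(c+1)(c^2-c+1)$ and $c^2-1=(c+1)(c-1)$. This gives
\[
D(c)-D(-1)=\tfrac43(c+1)\bigl[b(1+2c-2c^2)+(1-c)\bigr].
\]
The bracket is a concave quadratic in $c$ (since $b\ge0$), so on $[-1,1]$ it is bounded below by its endpoint values. These are $2-3b>0$ at $c=-1$ (because $b<2/3$) and $b\ge0$ at $c=1$. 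Hence $D(c)\ge D(-1)$ on $[-1,1]$, so the nearest boundary point is $\varphi_{Ne}(-1)=1/3$ and the distance is $a-1/3=r_a$. The symmetric case yields $5/3-a$.

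\textbf{Main obstacle.} The main obstacle is getting the clean factorization of $D(c)-D(-1)$. Once $|w-1|^2$ is reduced to a polynomial in $c$, the remaining argument is the endpoint check on a concave quadratic.
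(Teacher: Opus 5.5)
Your proof is correct and self-contained. I checked the formula for $D(c)$, the value $D(-1)=(a-\tfrac13)^2$, the factorization
$D(c)-D(-1)=\tfrac43(c+1)\bigl[b(1+2c-2c^2)+(1-c)\bigr]$,
the endpoint values $2-3b>0$ and $b\ge 0$ of the concave bracket, and the $b\mapsto -b$, $c\mapsto -c$ symmetry; all of them hold.

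The paper gives no proof of this lemma. It quotes it from Wani and Swaminathan. Lemmas of this kind are usually proved by writing the squared distance from $(a,0)$ to the parametrized boundary and minimizing it by calculus. You minimize the same squared distance, but you use an algebraic certificate instead of a derivative analysis. With the calculus route you would locate the interior critical point $c=\bigl(-1+\sqrt{1+18b^2}\bigr)/(6b)$ of $D$, show it is a local maximum, and then compare $D(-1)$ with $D(1)=(b+\tfrac23)^2$. Your factorization through $c+1$ removes all of that in one step.

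The derivative approach is more mechanical and carries over to Ma--Minda domains where no clean factorization exists. Yours is shorter. It also shows directly that the nearest boundary point is the inward cusp $\varphi_{Ne}(-1)=1/3$, with a tie between the two cusps at $a=1$.

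One step deserves a sentence, namely passing from the curve $w(t)$ to the domain. Since $\varphi_{Ne}$ is continuous on $\overline{\mathbb{D}}$ and open on $\mathbb{D}$, you have $\partial\varphi_{Ne}(\mathbb{D})\subseteq\varphi_{Ne}(\partial\mathbb{D})$; no univalence is needed for this. So the disk $|w-a|<r_a$ is connected, contains $a\in\varphi_{Ne}((-1,1))\subseteq\varphi_{Ne}(\mathbb{D})$, and misses $\partial\varphi_{Ne}(\mathbb{D})$. It therefore lies in $\varphi_{Ne}(\mathbb{D})$.
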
	
	\begin{corollary}\label{corollary9}Let
	\[
R_1
	=
	\min\left\{
	1,
	\left(
	\frac{2n(n+1)}
	{M(3n+2)}
	\right)^{1/n}
	\right\},
	\quad \text{and}\quad  R_2
	=
	\min\left\{
	1,
	\left(
	\frac{2n}
	{M(3n+2)}
	\right)^{1/n}
	\right\}.
	\]
	The $\mathcal {ST}_{Ne}$-radius of the class $\Q$ is $R_1$
	and the $\mathcal {CV}_{Ne}$-radius is $R_2$.	\end{corollary}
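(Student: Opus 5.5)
The plan follows the pattern of the preceding corollaries: a lower bound from Theorem~\ref{main theorem}, and sharpness from the extremal function \eqref{eq:sharp-stt}.

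\textbf{Lower bound.} Taking $a=1$ in Lemma~\ref{lemma9} gives $r_1=2/3$. Hence
$\mathbb D(1;2/3)\subseteq\varphi_{Ne}(\mathbb D)$. So Theorem~\ref{main theorem} applies with $\delta=1/3=\varphi_{Ne}(-1)$; indeed $\varphi_{Ne}(-1)=1-1+1/3=1/3$.

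Substituting $\delta=1/3$ into the theorem gives $(1-\delta)n(n+1)/(M(n+1-\delta)) = \tfrac23 n(n+1)/(M(n+\tfrac23)) = 2n(n+1)/(M(3n+2))$. Likewise, $(1-\delta)n/(M(n+1-\delta))=2n/(M(3n+2))$. These are exactly the stated $R_1$ and $R_2$. This yields $R_{\mathcal{ST}_{Ne}}(\Q)\ge R_1$ for $0<M\le n(n+1)$ and $R_{\mathcal{CV}_{Ne}}(\Q)\ge R_2$ for $0<M\le n$.

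\textbf{Sharpness.} When the minimum is attained at the non-trivial value $\rho<1$, use $f_0(z)=z+Mz^{n+1}/(n(n+1))$. First check that $f_0\in\Q$: we have $f_0''(z)=Mz^{n-1}$, so $|f_0''|\le M$.

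Pick $z_0$ with $|z_0|=\rho$ and $z_0^n=-\rho^n$. For the starlike case, $\rho^n=2n(n+1)/(M(3n+2))$ gives $M\rho^n(n+2/3)=\tfrac23 n(n+1)$. Equivalently, $nM\rho^n/(n(n+1)-M\rho^n)=2/3$. Then \eqref{eq:sharp-st} gives $z_0f_0'(z_0)/f_0(z_0)=1/3=\varphi_{Ne}(-1)\in\partial\varphi_{Ne}(\mathbb D)$.

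The convex case is identical: $\rho^n=2n/(M(3n+2))$ gives $nM\rho^n/(n-M\rho^n)=2/3$. Then \eqref{eq:sharp-cv} yields $1+z_0f_0''(z_0)/f_0'(z_0)=1/3$.

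\textbf{Main obstacle.} The only nontrivial point is to confirm that $1/3$ really lies on the boundary of the nephroid domain. This holds because it satisfies the curve equation $\bigl((u-1)^2+v^2-4/9\bigr)^3-4v^2/3=0$ at $u=1/3$, $v=0$. Since the expressions are real on the ray, values slightly past $\rho$ leave $\varphi_{Ne}(\mathbb D)$, so no larger radius can work. The radii coincide with the cardioid case of Corollary~\ref{corollary2}, since $\delta=1/3$ in both.
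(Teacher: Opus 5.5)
Your proposal is correct and follows the paper's proof essentially step for step. You use the inclusion $\mathbb D(1;2/3)\subseteq\varphi_{Ne}(\mathbb D)$ from Lemma~\ref{lemma9} in Theorem~\ref{main theorem} with $\delta=1/3$, and for sharpness $f_0(z)=z+Mz^{n+1}/(n(n+1))$ at a point with $z_0^n=-\rho^n$. Your explicit checks that $f_0\in\Q$ and that $1/3$ satisfies the nephroid boundary equation fill in details the paper only asserts.
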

	
	\begin{proof}
	Since
	$
	\mathbb D\!\left(1;2/3\right)\subset\varphi_{Ne}(\mathbb{D})
	$
	by Lemma~\ref{lemma9}, Theorem~\ref{main theorem} with
	\(
	\delta=1/3
	\)
	gives the asserted lower bounds for the
	$\mathcal {ST}_{Ne}$- and $\mathcal {CV}_{Ne}$-radii.
 	We now establish the sharpness of these bounds. Choose $z_0 \in \D$ satisfying
		$
		|z_0|=\rho
		 $ with $
		z_0^n=-\rho^n,
		$ and  take the extremal function $f_0:\D \rightarrow \mathbb C$ defined by
	\eqref{eq:sharp-stt}. Consider the
	$\mathcal {ST}_{Ne}$-radius. Let
	\[
	\rho
	=
	\left(
	\frac{2n(n+1)}
	{M(3n+2)}
	\right)^{1/n}
	<1,
	\]
	Using the defining equation for $\rho$, we have $
	M\rho^n(3n+2)
	=
	2n(n+1), $
	which yields
	\[
	\frac{nM\rho^n}
	{n(n+1)-M\rho^n}
	=
	\frac23.
	\]
	Substituting $z=z_0$ into \eqref{eq:sharp-st}, we obtain
	\[
	\frac{z_0f_0'(z_0)}{f_0(z_0)}
	=
	1-
	\frac{nM\rho^n}
	{n(n+1)-M\rho^n}
	=
	\frac13
	=
	\varphi_{Ne}(-1)\in
		\partial\varphi_{Ne}(\mathbb{D}).
	\]
	this showing that the
	$\mathcal {ST}_{Ne}$-radius is    sharp.
	
	The sharpness of the
	$\mathcal {CV}_{Ne}$-radius is proved similarly. Let
	\[
	\rho
	=
	\left(
	\frac{2n}
	{M(3n+2)}
	\right)^{1/n}
	<1.
	\]
	From the definition of $\rho$, it follows $
	M\rho^n(3n+2)=2n,$
	and consequently,
	\[
	\frac{nM\rho^n}
	{n-M\rho^n}
	=
	\frac23.
	\]
	Substituting $z=z_0$ into \eqref{eq:sharp-cv}, we obtain
	\[
	1+\frac{z_0f_0''(z_0)}{f_0'(z_0)}
	=
	1-
	\frac{nM\rho^n}
	{n-M\rho^n}
	=
	\frac13
	=
	\varphi_{Ne}(-1)\in
		\partial\varphi_{Ne}(\mathbb{D}).
	\]
	which shows that the
	$\mathcal{CV}_{Ne}$-radius is   sharp.
	\end{proof}

	\begin{remark}
	For the particular class $\mathcal{Q}_1(1)$, corresponding to $n=1$ and $M=1$, the
	radius formulas in Corollary~\ref{corollary9} reduce to
	\[
	R_{\mathcal {ST}_{Ne}}(\mathcal{Q}_1(1))
	=
	\frac45
	=
	0.8,
	\quad \text{	and} \quad
	R_{\mathcal {CV}_{Ne}}(\mathcal{Q}_1(1))
	=
	\frac25
	=
	0.4.
	\]
	\end{remark}

	In 2020, Goel \textit{et al.}~\cite{MR4044913} introduced the Ma--Minda starlike class corresponding to the modified sigmoid function
		$ 	\varphi_{SG}(z)=2/(1+e^{-z})	$.
	Specifically,
		$
		\mathcal{ST}_{SG}
		=
		\mathcal{ST}(\varphi_{SG})
		$
		and
		$
		\mathcal{CV}_{SG}
		=
		\mathcal{CV}(\varphi_{SG}).
		$

		\begin{lemma}\label{lemma10}\cite[Lemma 2.2]{MR4044913}
			For $2/(1+e)<a<2e/(1+e),\ \{w:|w-a|<r_a\}\subseteq \varphi_{SG}(\mathbb{D})$, where $r_a$ is
			\[ r_a=\frac{e-1}{e+1}-\left|1-a\right|\]
		\end{lemma}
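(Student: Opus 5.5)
The plan is to reduce the statement to the single inclusion
\[
\mathbb D\!\left(1;\tfrac{e-1}{e+1}\right)\subseteq\varphi_{SG}(\mathbb D),
\]
and then finish with the triangle inequality. Indeed, if $|w-a|<r_a=\frac{e-1}{e+1}-|1-a|$, then $|w-1|\le |w-a|+|a-1|<\frac{e-1}{e+1}$. Note that $r_a>0$ exactly when $2/(1+e)<a<2e/(1+e)$, since $1\pm\frac{e-1}{e+1}$ equal $\frac{2e}{e+1}$ and $\frac{2}{e+1}$. So everything rests on the disk centered at $1$.

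\textbf{Rewriting $\varphi_{SG}$.} We have
\[
\varphi_{SG}(z)=\frac{2}{1+e^{-z}}=1+\tanh(z/2),
\qquad
\frac{e-1}{e+1}=\tanh\tfrac12=\varphi_{SG}(1)-1=1-\varphi_{SG}(-1).
\]
The map $z\mapsto\tanh(z/2)$ is univalent on a neighbourhood of $\overline{\mathbb D}$, since its poles and its failures of injectivity require $|\operatorname{Im} z|\ge\pi$. Hence $\varphi_{SG}(\mathbb D)$ is the bounded Jordan domain enclosed by the curve $\varphi_{SG}(e^{it})$, and it contains $1=\varphi_{SG}(0)$. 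It therefore suffices to prove
\[
\min_{t}\bigl|\tanh(e^{it}/2)\bigr|=\tanh\tfrac12 .
\]
Granting this, the open disk $\mathbb D(1;\tanh\frac12)$ is connected, contains $1$, and misses the boundary curve, so it lies inside the domain.

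\textbf{Computing the boundary distance.} Use the identity
\[
|\tanh(x+iy)|^2=\frac{\cosh 2x-\cos 2y}{\cosh 2x+\cos 2y}.
\]
With $x=\tfrac12\cos t$ and $y=\tfrac12\sin t$, this becomes
\[
\frac{\cosh c-\cos s}{\cosh c+\cos s},\qquad c=\cos t,\quad s=\sin t .
\]
Since $|s|\le1<\pi/2$, we have $\cos s>0$. The expression is an increasing function of the ratio $A=\cosh c/\cos s$. So the claim reduces to showing $A\ge\cosh 1$, with equality at $t=0,\pi$.

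\textbf{The main obstacle.} Set $p=c^2\in[0,1]$, so that $s^2=1-p$, and consider
\[
F(p)=\log\cosh\sqrt p-\log\cos\sqrt{1-p}.
\]
Differentiating gives
\[
F'(p)=\frac{\tanh\sqrt p}{2\sqrt p}-\frac{\tan\sqrt{1-p}}{2\sqrt{1-p}} .
\]
The elementary inequalities $\tanh u/u\le1\le\tan v/v$ for $u,v\in(0,1]$ give $F'\le0$. Hence $F$ is decreasing on $[0,1]$, and its minimum is $F(1)=\log\cosh1$. This yields $A\ge\cosh1$ and completes the argument.

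The only delicate points are the endpoint behaviour of $F'$ at $p=0$ and $p=1$, which is handled by continuity, and checking the univalence claim on $\overline{\mathbb D}$. Both are routine.
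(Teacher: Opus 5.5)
Your proof is correct. The paper itself gives no proof of this lemma: it is quoted from Goel and Kumar \cite{MR4044913}, and only the instance $a=1$, namely $\mathbb{D}(1;(e-1)/(e+1))\subset\varphi_{SG}(\mathbb{D})$, is used (in Corollary~\ref{corollary10}).

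Your triangle-inequality reduction isolates exactly that fact, and it loses nothing. For $a\le1$ one has $r_a=a-2/(1+e)$, and for $a\ge1$ one has $r_a=2e/(1+e)-a$. These are the distances from $a$ to the boundary points $\varphi_{SG}(-1)$ and $\varphi_{SG}(1)$, so the disks in the lemma are the largest possible.

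The remaining steps all check out: $\varphi_{SG}=1+\tanh(z/2)$, the modulus identity for $\tanh$, the monotonicity of $(A-1)/(A+1)$, and $F'<0$ on $(0,1)$. Because $F'<0$ is strict, $\varphi_{SG}(\pm1)$ are the only boundary points nearest to $1$.

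Two remarks comparing routes:
\begin{enumerate}
\item Univalence is not needed for the containment. The inclusion $\partial f(\mathbb{D})\subseteq f(\partial\mathbb{D})$ holds for any $f$ continuous on $\overline{\mathbb{D}}$ and open on $\mathbb{D}$, and that is all your connectedness argument uses. Univalence matters only for the paper's later use of $\varphi_{SG}(-1)\in\partial\varphi_{SG}(\mathbb{D})$.
\item The case $a=1$ has a shorter proof that avoids the boundary curve entirely. For $|\zeta|<(e-1)/(e+1)$, put $z=\log\frac{1+\zeta}{1-\zeta}=2\sum_{k\ge0}\zeta^{2k+1}/(2k+1)$. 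Then $\varphi_{SG}(z)=1+\zeta$, and positivity of the coefficients gives $|z|\le\log\frac{1+|\zeta|}{1-|\zeta|}<\log e=1$.
\end{enumerate}
Your calculus route is longer, but it also locates the nearest boundary points explicitly.
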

	\begin{corollary}\label{corollary10}Let
	\[
R_1
	=
	\min\left\{
	1,
	\left(
	\frac{(e-1)n(n+1)}
	{M\bigl(n(e+1)+e-1\bigr)}
	\right)^{1/n}
	\right\},
\] and \[
R_2	=
	\min\left\{
	1,
	\left(
	\frac{(e-1)n}
	{M\bigl(n(e+1)+e-1\bigr)}
	\right)^{1/n}
	\right\}.
	\]
	The $\mathcal {ST}_{SG}$-radius of the class $\Q$ is $R_1$ and the $\mathcal {CV}_{SG}$-radius  $R_2$.
	\end{corollary}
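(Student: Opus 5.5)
Following the pattern of the preceding corollaries, I would derive the lower bound from Theorem~\ref{main theorem} and prove sharpness with the extremal function $f_0$ of \eqref{eq:sharp-stt}.

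\textbf{Lower bound.} Take $a=1$ in Lemma~\ref{lemma10}, which gives $r_1=(e-1)/(e+1)$. Hence $\mathbb D\bigl(1;(e-1)/(e+1)\bigr)\subseteq\varphi_{SG}(\mathbb D)$, so we set
\[
\delta=1-\frac{e-1}{e+1}=\frac{2}{1+e}=\varphi_{SG}(-1),
\]
which lies in $(0,1)$. Next, simplify the expressions in Theorem~\ref{main theorem}. First, $1-\delta=(e-1)/(e+1)$. Second, $n+1-\delta=n+(e-1)/(e+1)=\bigl(n(e+1)+e-1\bigr)/(e+1)$. The factors $e+1$ cancel, so
\[
\frac{(1-\delta)n(n+1)}{M(n+1-\delta)}=\frac{(e-1)n(n+1)}{M\bigl(n(e+1)+e-1\bigr)},
\]
and the analogous identity holds without the factor $n+1$. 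Theorem~\ref{main theorem} then gives $R_{\mathcal{ST}_{SG}}(\Q)\ge R_1$ for $0<M\le n(n+1)$ and $R_{\mathcal{CV}_{SG}}(\Q)\ge R_2$ for $0<M\le n$.

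\textbf{Sharpness for $\mathcal{ST}_{SG}$.} Suppose the bracketed root $\rho$ defining $R_1$ is less than $1$; otherwise $R_1=1$ and there is nothing to prove. Choose $z_0$ with $|z_0|=\rho$ and $z_0^n=-\rho^n$. Clearing denominators in the defining equation gives $M\rho^n\bigl(n(e+1)+e-1\bigr)=(e-1)n(n+1)$, and rearranging yields
\[
\frac{nM\rho^n}{n(n+1)-M\rho^n}=\frac{e-1}{e+1}.
\]
Substituting into \eqref{eq:sharp-st} at $z=z_0$ gives
\[
\frac{z_0f_0'(z_0)}{f_0(z_0)}=1-\frac{e-1}{e+1}=\frac{2}{1+e}=\varphi_{SG}(-1)\in\partial\varphi_{SG}(\mathbb D).
\]
So the subordination fails just beyond $\rho$.

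\textbf{Sharpness for $\mathcal{CV}_{SG}$.} The same argument applies with \eqref{eq:sharp-cv} and the equation $M\rho^n\bigl(n(e+1)+e-1\bigr)=(e-1)n$, which gives $nM\rho^n/(n-M\rho^n)=(e-1)/(e+1)$.

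\textbf{Main point to check.} The one step needing care is that $\varphi_{SG}(-1)$ really lies on $\partial\varphi_{SG}(\mathbb D)$. This holds because $\varphi_{SG}$ is univalent, real on the real axis, and increasing there, so $-1$ maps to the leftmost real boundary point. Everything else is the routine algebra above.
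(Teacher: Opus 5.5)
Your proposal is correct and follows the paper's proof essentially line for line: the lower bound comes from Theorem~\ref{main theorem} with $\delta=2/(1+e)$, obtained from Lemma~\ref{lemma10} at $a=1$, and sharpness comes from $f_0$ in \eqref{eq:sharp-stt} evaluated at $z_0^n=-\rho^n$, which gives the value $\varphi_{SG}(-1)=2/(1+e)$. Your extra justification that $\varphi_{SG}(-1)\in\partial\varphi_{SG}(\mathbb D)$, via univalence and symmetry about the real axis, is a detail the paper simply asserts.
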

	
	\begin{proof}
 	By Lemma~\ref{lemma10}, $ \mathbb D\!\left(1;(e-1)/(e+1)\right)\subset\varphi_{SG}(\mathbb{D})$.
	Hence, Theorem~\ref{main theorem} with
	\(
	\delta=2/(1+e)
	\)
	immediately furnishes the asserted lower bounds for the
	$\mathcal {ST}_{SG}$- and $\mathcal {CV}_{SG}$-radii.
  We now show that these bounds are best possible. Choose $z_0 \in \D$ such that
		$
		|z_0|=\rho,
		 $ and $
		z_0^n=-\rho^n.
		$ Consider the extremal function $f_0:\D \rightarrow \mathbb C$ defined by
\eqref{eq:sharp-stt}.
	Consider the
	$\mathcal {ST}_{SG}$-radius. Let
	\[
	\rho
	=
	\left(
	\frac{(e-1)n(n+1)}
	{M\bigl(n(e+1)+e-1\bigr)}
	\right)^{1/n}
	<1,
	\]
	Using the definition of $\rho$, we obtain
		$
		M\rho^n\bigl(n(e+1)+e-1\bigr)
		=
		(e-1)n(n+1),
		$
	which implies
	\[
	\frac{nM\rho^n}
	{n(n+1)-M\rho^n}
	=
	\frac{e-1}{e+1}.
	\]
	Evaluating \eqref{eq:sharp-st} at $z=z_0$ gives
	\[
	\frac{z_0f_0'(z_0)}{f_0(z_0)}
	=
	1-
	\frac{nM\rho^n}
	{n(n+1)-M\rho^n}
	=
	\frac{2}{e+1}
	=
	\varphi_{SG}(-1)\in
		\partial\varphi_{SG}(\mathbb{D}).
	\]
	which proves that the
	$\mathcal {ST}_{SG}$-radius is   sharp.
	
	The proof for the
	$\mathcal {CV}_{SG}$-radius proceeds in the same way. Let
	\[
	\rho
	=
	\left(
	\frac{(e-1)n}
	{M\bigl(n(e+1)+e-1\bigr)}
	\right)^{1/n}
	<1.
	\]
	Then $
	M\rho^n\bigl(n(e+1)+e-1\bigr)
	=
	(e-1)n,$
	and therefore
	\[
	\frac{nM\rho^n}
	{n-M\rho^n}
	=
	\frac{e-1}{e+1}.
	\]
 Evaluating \eqref{eq:sharp-cv} at $z=z_0$ gives
	\[
	1+\frac{z_0f_0''(z_0)}{f_0'(z_0)}
	=
	1-
	\frac{nM\rho^n}
	{n-M\rho^n}
	=
	\frac{2}{e+1}
	=
	\varphi_{SG}(-1)\in
		\partial\varphi_{SG}(\mathbb{D}).
	\]
	which shows that the
	$\mathcal {CV}_{SG}$-radius is  sharp.
	\end{proof}

	\begin{remark}
	For the particular class $\mathcal{Q}_1(1)$, corresponding to $n=1$ and $M=1$, the
	radius formulas in Corollary~\ref{corollary10} reduce to
	\[
	R_{\mathcal {ST}_{SG}}(\mathcal{Q}_1(1))
	=
	\frac{e-1}{e}
	\approx
	0.632121,
\] and \[
 	R_{\mathcal {CV}_{SG}}(\mathcal{Q}_1(1))
	=
	\frac{e-1}{2e}
	=
	\frac12\left(1-\frac1e\right)
	\approx
	0.316060.
	\]
	\end{remark}
	
	\begin{figure}[!htbp]
		\centering
		\begin{subfigure}{0.48\textwidth}
		\centering
	\begin{tikzpicture}
		\begin{axis} [
		    axis lines=center,
		    width=8.3cm,
		    height=8.5cm,
		    xmin=-0.3, xmax=2.5,
		    ymin=-1.5, ymax=1.5,
		    xtick={0.5,1.0,1.5,2.0},
		    ytick={-1.0,-0.5,0.5,1.0},
		    thick,
		    scale=0.6
		]
		   \addplot[
		   			domain=0:360,
		   			samples=400,
		   			smooth,
		   			color=black,
		   			line width=1pt
		   		]
		   		({0.809 + 0.476*cos(x)},
		   		{0.476*sin(x)});
		
		\addplot[
					domain=0:360,
					samples=400,
					smooth,
				color=black,
					line width=1pt
				]
				({1 + 0.66667*cos(x)},
				{0.66667*sin(x)});	
		\addplot[
		    domain=-180:180,
		    samples=400,
		    smooth,
		    color=black,
		    line width=1pt,
		]
		({1 + cos(x) - (1/3) * cos(3*x)},
		 {sin(x) - (1/3) * sin(3*x)});
		
		\end{axis}
		\end{tikzpicture}
		\caption{$\mathcal{ST}_{Ne}$-Radius}
		\end{subfigure}
		\begin{subfigure}{0.48\textwidth}
		\centering
	\begin{tikzpicture}
		\begin{axis} [
		    axis lines=center,
		    width=10cm,
		    height=8.3cm,
		    xmin=-0.3, xmax=1.6,
		    ymin=-0.8, ymax=0.8,
		    xtick={0.2,0.6,1.0,1.4},
		    ytick={-0.6,-0.2,0.2,0.6},
		    thick,
		    scale=0.6
		]
		   \addplot[
		   			domain=0:360,
		   			samples=400,
		   			smooth,
		   			color=black,
		   			line width=1pt
		   		]
		   		({0.889 + 0.351*cos(x)},
		   		{0.351*sin(x)});
		
		\addplot[
					domain=0:360,
					samples=400,
					smooth,
				color=black,
					line width=1pt
				]
				({1 + 0.462117*cos(x)},
				{0.462117*sin(x)});	
		\addplot[
		    domain=-180:180,
		    samples=400,
		    smooth,
		    color=black,
		    line width=1pt,
		]
		({2 * (1 + ex(x) * cos(deg(sin(x)))) / den1(x)},
		 {2 * ex(x) * sin(deg(sin(x))) / den1(x)});
		
		\end{axis}
		\end{tikzpicture}
		\caption{$\mathcal{ST}_{SG}$-Radius}
		\end{subfigure}
		\caption{Radii associated with $\varphi_{Ne}(z)=1+z-z^3/3$ and  $\varphi_{SG}(z)={2}/(1+e^{-z})$ for the class $\Q$.}
		\label{p1f5}
			\end{figure}
	
Gandhi~\cite{MR4099174} introduced the Ma--Minda starlike class associated with the three-leaf function
	$	\varphi_{3L}(z)=	1+4z/5+ z^{4}/5$.	Specifically
	$
	\mathcal{ST}_{3L}
	=
	\mathcal{ST}(\varphi_{3L})
	$
	and
	$
	\mathcal{CV}_{3L}
	=
	\mathcal{CV}(\varphi_{3L}),
	$
	respectively.
	\begin{lemma}\label{lemma11}\cite[Lemma 2.2]{MR4099174}
			For $2/5<a< 2,\ \{w:|w-a|<r_a\}\subseteq \varphi_{3L}(\mathbb{D})$, where $r_a$ is
			\begin{align*}
				r_a &=
				\begin{dcases}
					a-\frac{2}{5},\ & \ \frac{2}{5}<a\leq 1\\
					\sqrt{\left(a-\frac{7}{5}\right)^2+\frac{a}{5}},\ & 1\leq a\le\frac{51}{35}\\
					2-a,\ & \ \frac{51}{35}\leq a<2.
				\end{dcases}
			\end{align*}
		\end{lemma}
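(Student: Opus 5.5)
The plan is to compute the Euclidean distance from a real point $a\in(2/5,2)$ to the boundary curve $\partial\varphi_{3L}(\mathbb D)$, and take $r_a$ to be that distance. First note that $\varphi_{3L}(\mathbb D)$ is symmetric about the real axis and meets it in the interval $(\varphi_{3L}(-1),\varphi_{3L}(1))=(2/5,2)$. So each such $a$ lies in the domain, and the open disk of radius $\operatorname{dist}(a,\partial\varphi_{3L}(\mathbb D))$ about $a$ is contained in $\varphi_{3L}(\mathbb D)$. It therefore suffices to show that this distance is at least the stated $r_a$; in fact I expect equality.

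Parametrize the boundary by $w(t)=1+\tfrac45e^{it}+\tfrac15e^{4it}$ for $t\in[0,\pi]$, using the symmetry. Put $b=1-a$ and expand
\[
D(t):=|w(t)-a|^2=b^2+\tfrac{17}{25}+\tfrac{8b}{5}\cos t+\tfrac{2b}{5}\cos 4t+\tfrac{8}{25}\cos 3t .
\]
Substituting $x=\cos t\in[-1,1]$, with $\cos3t=4x^3-3x$ and $\cos4t=8x^4-8x^2+1$, turns $D$ into a quartic polynomial $P_a(x)$. The task is then to minimize $P_a$ over $[-1,1]$. The endpoint values are $P_a(-1)=(a-2/5)^2$ and $P_a(1)=(2-a)^2$, which match the first and third branches of $r_a$. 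Next I would compute $P_a'(x)$, a cubic in $x$ whose coefficients are linear in $a$, and look for a factorization. I expect a factor of the form $(x-x_0)$, or a quadratic factor, whose remaining root $x^*(a)$ lies in $(-1,1)$ exactly for $1\le a\le 51/35$. Evaluating $P_a(x^*(a))$ should then give $(a-7/5)^2+a/5$.

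The case analysis runs as follows.
\begin{enumerate*}[label=(\roman*)]
\item For $2/5<a\le1$, show that $P_a(x)-(a-2/5)^2\ge0$ on $[-1,1]$. Factor it as $(x+1)$ times a cubic and check the cubic's sign.
\item For $51/35\le a<2$, show similarly that $P_a(x)-(2-a)^2=(1-x)\cdot(\text{cubic})\ge0$.
\item For $1\le a\le51/35$, verify that the interior critical value is the global minimum on $[-1,1]$ and is at most both endpoint values.
\end{enumerate*}
Continuity at $a=1$ and at $a=51/35$ serves as a consistency check: at $a=1$ we get $\sqrt{36/25+1/5}\ne 3/5$, so the endpoint branch must dominate there and I would recheck the branch boundaries. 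At $a=51/35$ both formulas give $19/35$.

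The main obstacle is step (iii), together with the sign checks in (i) and (ii). The cubic factors have coefficients depending on $a$, and the minimum may occur at interior points corresponding to the lobes near $t=\pm\pi/2$. I would handle these by writing each residual polynomial as a sum of terms with a clear sign on $[-1,1]$, for instance via the substitution $x=\cos t$ back into trigonometric form. If that fails, I would use a monotonicity-in-$a$ argument: $\partial_a P_a$ is linear in $a$, so it suffices to check the extreme values of $a$ in each range.
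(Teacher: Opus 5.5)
The paper gives no proof of this lemma; it is quoted from Gandhi~\cite{MR4099174}. Your route is therefore a self-contained verification rather than a variant of an argument in the paper, and it is sound: minimize $|w(t)-a|^2$ over the boundary, pass to the quartic $P_a(x)$ in $x=\cos t$, and compare candidates. Since $\partial\varphi_{3L}(\D)\subseteq\varphi_{3L}(\partial\D)$, and $\varphi_{3L}'(x)=\tfrac45(1+x^3)>0$ on $(-1,1)$, it suffices to show $\min_{[-1,1]}P_a\ge r_a^2$. Your expansion of $D(t)$ and your endpoint values are correct. However, some of your expectations are wrong, and one of them would send you in the wrong direction.

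First, the consistency check at $a=1$ contains an arithmetic slip. We have $(1-\tfrac75)^2=\tfrac4{25}$, not $\tfrac{36}{25}$, so the middle branch gives $\sqrt{4/25+5/25}=3/5$. Hence $r_a$ is continuous at $a=1$, and the branch boundaries are correct as stated.

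Second, the interior critical point does not move with $a$. With $b=1-a$,
\[
P_a'(x)=\tfrac{8}{25}(2x-1)\bigl(20bx^2+(10b+6)x+3-5b\bigr),
\]
so $x=\tfrac12$ is a critical point for every $a$. It is a local minimum for all $2/5<a<2$, and the quadratic factor yields only local maxima in $(-1,1)$. Geometrically, $t=\pi/3$ is the cusp $w(\pi/3)=\tfrac{13}{10}+\tfrac{3\sqrt3}{10}i$, where $\varphi_{3L}'(e^{i\pi/3})=0$, and
\[
P_a(\tfrac12)=(a-\tfrac{13}{10})^2+\tfrac{27}{100}=(a-\tfrac75)^2+\tfrac a5.
\]
The breakpoints come from comparing the three local minima: $P_a(\tfrac12)-P_a(-1)=\tfrac95(1-a)$ and $P_a(\tfrac12)-P_a(1)=\tfrac75(a-\tfrac{51}{35})$.

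Third, your fallback works, but not because $\partial_aP_a$ is linear in $a$; $P_a$ itself is quadratic in $a$. What matters is that each residual $P_a(x)-P_a(x_0)$ is affine in $a$, because the $b^2$ terms cancel. That is why it suffices to check the two ends of each $a$-range. Explicitly,
\begin{align*}
25\bigl(P_a(x)-P_a(-1)\bigr)&=(1+x)\bigl[8(2x-1)^2+40b(2x^3-2x^2+1)\bigr],\\
25\bigl(P_a(x)-P_a(\tfrac12)\bigr)&=(2x-1)^2\bigl[8(1+x)+5b(4x^2+4x-1)\bigr],\\
25\bigl(P_a(x)-P_a(1)\bigr)&=(x-1)\bigl[8(2x+1)^2+40b(2x^3+2x^2+1)\bigr].
\end{align*}
The three cases then close as follows.
\begin{enumerate*}[label=(\roman*)]
\item At $b=0$ and $b=\tfrac35$ the first bracket equals $8(2x-1)^2$ and $16(1+x)(3x^2-4x+2)$, both nonnegative on $[-1,1]$.
\item The third bracket is increasing in $b$, because $2x^3+2x^2+1=2x^2(1+x)+1\ge1$; at $b=-\tfrac{16}{35}$ it equals $-\tfrac87(2x-1)^2(8x+9)\le0$.
\item At $b=0$ and $b=-\tfrac{16}{35}$ the second bracket equals $8(1+x)$ and $\tfrac87(1-x)(8x+9)$, both nonnegative on $[-1,1]$.
\end{enumerate*}
This completes (i)--(iii) and shows that $r_a$ is exactly the distance from $a$ to $\partial\varphi_{3L}(\D)$.
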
	
	\begin{corollary}\label{corollary11}Let
	\[
R_1
	=
	\min\left\{
	1,
	\left(
	\frac{3n(n+1)}
	{M(5n+3)}
	\right)^{1/n}
	\right\},
	\quad \text{and} \quad R_2
	=
	\min\left\{
	1,
	\left(
	\frac{3n}
	{M(5n+3)}
	\right)^{1/n}
	\right\}.
	\]
	The $\mathcal {ST}_{3L}$-radius of the class $\Q$ is  $R_1$
	and the $\mathcal {CV}_{3L}$-radius$R_2$.
	\end{corollary}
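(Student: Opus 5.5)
Following the pattern of the preceding corollaries, the plan has two parts: a lower bound from Theorem~\ref{main theorem}, and sharpness via the extremal function $f_0$ in \eqref{eq:sharp-stt}.

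\emph{Lower bound.} By Lemma~\ref{lemma11} with $a=1$, which lies in the first range $2/5<a\le 1$, we get $r_1=1-2/5=3/5$. Hence $\mathbb D(1;3/5)\subseteq\varphi_{3L}(\mathbb D)$. (At $a=1$ the second formula gives $\sqrt{(2/5)^2+1/5}=3/5$ as well, so the two branches agree there.) We then apply Theorem~\ref{main theorem} with $\delta=2/5$. The bracketed quantities become
\[
\frac{(3/5)n(n+1)}{M(n+3/5)}=\frac{3n(n+1)}{M(5n+3)},\qquad \frac{(3/5)n}{M(n+3/5)}=\frac{3n}{M(5n+3)},
\]
which are exactly $R_1$ and $R_2$. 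This gives $R_{\mathcal{ST}_{3L}}(\Q)\ge R_1$ for $0<M\le n(n+1)$ and $R_{\mathcal{CV}_{3L}}(\Q)\ge R_2$ for $0<M\le n$.

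\emph{Sharpness.} Suppose $\rho=\bigl(3n(n+1)/(M(5n+3))\bigr)^{1/n}<1$, and choose $z_0$ with $|z_0|=\rho$ and $z_0^n=-\rho^n$. The defining relation $M\rho^n(5n+3)=3n(n+1)$ rearranges to
\[
\frac{nM\rho^n}{n(n+1)-M\rho^n}=\frac35 .
\]
Then \eqref{eq:sharp-st} gives $z_0f_0'(z_0)/f_0(z_0)=1-3/5=2/5$.

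For convexity, suppose $\rho=\bigl(3n/(M(5n+3))\bigr)^{1/n}<1$. The relation $M\rho^n(5n+3)=3n$ yields $nM\rho^n/(n-M\rho^n)=3/5$, so \eqref{eq:sharp-cv} gives $1+z_0f_0''(z_0)/f_0'(z_0)=2/5$.

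In both cases the value is $\varphi_{3L}(-1)=1-4/5+1/5=2/5$.

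\emph{Main obstacle.} The delicate point is confirming that $2/5$ really is a boundary point of $\varphi_{3L}(\mathbb D)$, and not merely the image of $-1$ sitting inside the domain. This matters because the three-leaf domain is not convex, so the boundary argument needs justification. It suffices to check that the real segment $(2/5,1)$ is covered by $\varphi_{3L}$ only from real $z$ near $-1$. Concretely, $\varphi_{3L}$ restricted to $[-1,1]$ has derivative $4/5+4z^3/5\ge 0$, so it is monotone there, and by the real-axis symmetry of the domain its leftmost real boundary point is $\varphi_{3L}(-1)=2/5$. This is exactly the endpoint $a=2/5$ appearing in Lemma~\ref{lemma11}. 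Granting that, each radius cannot be enlarged, since points just beyond $\rho$ on the ray $z_0^n<0$ are mapped to real values below $2/5$, which lie outside $\varphi_{3L}(\mathbb D)$.
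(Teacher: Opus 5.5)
Your proposal is correct and follows the paper's proof essentially line by line. Lemma~\ref{lemma11} at $a=1$ gives $\mathbb D(1;3/5)\subseteq\varphi_{3L}(\mathbb D)$, Theorem~\ref{main theorem} with $\delta=2/5$ then yields $R_1$ and $R_2$, and $f_0$ at $z_0^n=-\rho^n$ attains $2/5=\varphi_{3L}(-1)$.

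One small point concerns your extra boundary discussion, which the paper simply asserts. Monotonicity of $\varphi_{3L}$ on $[-1,1]$ alone does not exclude non-real $z$ being sent to real values $\le 2/5$. You need that $\varphi_{3L}$ is univalent with real coefficients, so $\varphi_{3L}(z)\in\mathbb R$ forces $z=\bar z$. This gives $\varphi_{3L}(\mathbb D)\cap\mathbb R=(2/5,2)$, and hence $2/5\in\partial\varphi_{3L}(\mathbb D)$.
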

	
	\begin{proof}
	From Lemma~\ref{lemma11},
	$
	\mathbb D\!\left(1;3/5\right)\subset\varphi_{3L}(\mathbb{D}).
	$
	Applying Theorem~\ref{main theorem} with
	\(
	\delta=2/5
	\)
	gives the asserted lower bounds for the
	$\mathcal{ST}_{3L} $- and $\mathcal {CV}_{3L}$-radii.
  We next verify the sharpness of these estimates.
	 Choose $z_0 \in \D$ such that
			$
			|z_0|=\rho,
			 $ and $
			z_0^n=-\rho^n.
			$ Consider the extremal function $f_0:\D \rightarrow \mathbb C$ defined by
			\eqref{eq:sharp-stt}.
	Consider first the
	$\mathcal {ST}_{3L}$-radius. Let
	\[
	\rho
	=
	\left(
	\frac{3n(n+1)}
	{M(5n+3)}
	\right)^{1/n}
	<1,
	\]
	Using the definition of $\rho$, we have $M\rho^n(5n+3)=3n(n+1),$
	which implies
	\[
	\frac{nM\rho^n}
	{n(n+1)-M\rho^n}
	=
	\frac35.
	\]
	Hence, by \eqref{eq:sharp-st}, evaluated at $z=z_0$, we have
	\[
	\frac{z_0f_0'(z_0)}{f_0(z_0)}
	=
	1-
	\frac{nM\rho^n}
	{n(n+1)-M\rho^n}
	=
	\frac25
	=
	\varphi_{3L}(-1)	\in
		\partial\varphi_{3L}(\mathbb{D}).
	\]
	which establishes the sharpness of the
	$\mathcal {ST}_{3L}$-radius.
	
	The proof for the
	$\mathcal {CV}_{3L}$-radius is similar. Let
	\[
	\rho
	=
	\left(
	\frac{3n}
	{M(5n+3)}
	\right)^{1/n}
	<1.
	\]
	Then $	M\rho^n(5n+3)=3n,$
	and therefore
	\[
	\frac{nM\rho^n}
	{n-M\rho^n}
	=
	\frac35.
	\]
	Hence, by \eqref{eq:sharp-cv}, evaluated at $z=z_0$, we have
	\[
	1+\frac{z_0f_0''(z_0)}{f_0'(z_0)}
	=
	1-
	\frac{nM\rho^n}
	{n-M\rho^n}
	=
	\frac25
	=
	\varphi_{3L}(-1)	\in
		\partial\varphi_{3L}(\mathbb{D}).
	\]
	which proves that the
	$\mathcal {CV}_{3L}$-radius is sharp.
	\end{proof}

	\begin{remark}
	For the particular class $\mathcal{Q}_1(1)$, corresponding to $n=1$ and $M=1$, the
	radius formulas in Corollary~\ref{corollary11} reduce to
	\[
	R_{\mathcal {ST}_{3L}}(\mathcal{Q}_1(1))
	=
	\frac{3}{4}
	=
	0.75,
	\quad \text{and}\quad
	R_{\mathcal {CV}_{3L}}(\mathcal{Q}_1(1))
	=
	\frac{3}{8}
	=
	0.375.
	\]
	\end{remark}	
	
	In 2022, Arora and Kumar~\cite{MR4457452} introduced the Ma--Minda starlike class  corresponding to the petal-shaped domain generated by
		$
		\displaystyle
		\varphi_{h}(z)
		=
		1+\sinh^{-1}(z).
		$
		Let
		$
		\mathcal{ST}_{h}
		=
		\mathcal{ST}(\varphi_{h})
		$
		and
		$
		\mathcal{CV}_{h}
		=
		\mathcal{CV}(\varphi_{h}).
		$

		\begin{lemma}\label{lemma12}\cite[Lemma 2.6]{MR4457452}
			For $1-\sinh^{-1}1<a<1+\sinh^{-1}1,\ \{w:|w-a|<r_a\}\subseteq \varphi_{h}(\mathbb{D})$, where $r_a$ is given by
			\begin{align*}
				r_a &=
				\begin{dcases}
					a-(1-\sinh^{-1}1),\ & \ 1-\sinh^{-1}1<a\leq1,\\
					1+\sinh^{-1}1-a,\ & 1\leq a<	1+\sinh^{-1}1.
				\end{dcases}
			\end{align*}
		\end{lemma}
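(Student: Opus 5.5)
The plan is to reduce everything to one inclusion, namely that $\mathbb D(1;\sinh^{-1}1)\subseteq\varphi_h(\mathbb D)$, and then get both cases of $r_a$ from the triangle inequality. Put $c=\sinh^{-1}1\approx 0.8814$. The reduction works as follows.

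For $1-c<a\le 1$, suppose $|w-a|<a-(1-c)$. Then
\[
|w-1|\le |w-a|+(1-a)<a-1+c+1-a=c .
\]
So $\mathbb D(a;r_a)\subseteq\mathbb D(1;c)$. Symmetrically, for $1\le a<1+c$, the bound $|w-a|<1+c-a$ gives $|w-1|<c$. Hence it suffices to prove that $\mathbb D(0;c)\subseteq \sinh^{-1}(\mathbb D)$, with $\sinh^{-1}$ the principal branch used to define $\varphi_h$.

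To prove this inclusion, I will work with the inverse map $\sinh$ rather than with the boundary curve $1+\sinh^{-1}(e^{it})$.

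1. \emph{Size of the image.} Since $\sinh w=\sum_{k\ge0} w^{2k+1}/(2k+1)!$ has nonnegative coefficients, $|\sinh w|\le \sinh|w|$. So for $|w|<c$ we get $|\sinh w|<\sinh c=1$, i.e.\ $\sinh$ maps $\mathbb D(0;c)$ into $\mathbb D$.

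2. \emph{Identifying the branch.} The function $\sinh$ is injective on the strip $|\operatorname{Im}w|<\pi/2$. This strip contains $\mathbb D(0;c)$ because $c<\pi/2$. The principal $\sinh^{-1}$ is analytic on $\mathbb D$ (its branch points $\pm i$ lie on the boundary) and satisfies $\sinh^{-1}(0)=0$. Consider the composition $\sinh^{-1}(\sinh w)$ on the connected disk $\mathbb D(0;c)$. It is analytic there and equals $w$ near $0$, so by the identity theorem it equals $w$ on the whole disk. Therefore every $w$ with $|w|<c$ is of the form $\sinh^{-1}(z)$ with $z=\sinh w\in\mathbb D$, which gives $\mathbb D(0;c)\subseteq\sinh^{-1}(\mathbb D)$.

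Translating by $1$ yields $\mathbb D(1;c)\subseteq\varphi_h(\mathbb D)$, and the reduction above then gives the lemma. The radius is also optimal. The real points $1\pm c=\varphi_h(\pm1)$ lie on $\partial\varphi_h(\mathbb D)$, and they are at distance exactly $r_a$ from $a$ in the two respective cases.

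The main obstacle is step 2: one must make sure that the branch of $\sinh^{-1}$ defining $\varphi_h$ really inverts $\sinh$ on the whole disk $\mathbb D(0;c)$, and not only near the origin. The identity-theorem argument, together with the fact that $c<\pi/2$ keeps the disk inside the strip where $\sinh$ is injective, is what I will use to settle it.
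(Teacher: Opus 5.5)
Your proof is correct. The paper does not prove this lemma itself; it imports it from Arora and Kumar. Inclusion lemmas of this kind are typically proved by parametrizing the boundary curve $1+\sinh^{-1}(e^{it})$ and minimizing the distance from $a$ to it, which requires explicit control of the real and imaginary parts of $\sinh^{-1}(e^{it})$.

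You instead notice that both cases collapse to $r_a=\sinh^{-1}1-|a-1|$. The triangle inequality then reduces everything to the single inclusion $\mathbb{D}(1;\sinh^{-1}1)\subseteq\varphi_h(\mathbb{D})$, which is the only instance ($a=1$) that Corollary~\ref{corollary12} actually uses. You prove that inclusion by pulling back through $\sinh$, using $|\sinh w|\le\sinh|w|$ and the identity theorem.

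Your route gives a short, computation-free argument, and optimality of $r_a$ follows at once from the real boundary points $\varphi_h(\pm1)=1\pm\sinh^{-1}1$. The price is that it only works because $r_a$ has the form $R-|a-1|$. The boundary-minimization method also handles domains where it does not, such as the lemniscate in Lemma~\ref{lemma14} or the middle regime of the three-leaf domain in Lemma~\ref{lemma11}.

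A minor point: injectivity of $\sinh$ on the strip $|\operatorname{Im}w|<\pi/2$ is not needed. Once $\sinh w\in\mathbb{D}$ on $\mathbb{D}(0;\sinh^{-1}1)$, the identity theorem alone gives $\sinh^{-1}(\sinh w)=w$ there.
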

	\begin{corollary}\label{corollary12}Let
	\[
R_1
	=
	\min\left\{
	1,
	\left(
	\frac{\sinh^{-1}(1)\,n(n+1)}
	{M\left(n+\sinh^{-1}(1)\right)}
	\right)^{1/n}
	\right\},
	\]and
	\[
R_2
	=
	\min\left\{
	1,
	\left(
	\frac{\sinh^{-1}(1)\,n}
	{M\left(n+\sinh^{-1}(1)\right)}
	\right)^{1/n}
	\right\}.
	\]
The $\mathcal {ST}_{h}$-radius of the class $\Q$ is $R_1$
	and the $\mathcal {}CV_{h}$-radius is $R_2$.
	\end{corollary}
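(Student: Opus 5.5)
The argument follows the pattern of the preceding corollaries: a lower bound from Theorem~\ref{main theorem} and sharpness from the extremal function \eqref{eq:sharp-stt}.

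\emph{Lower bound.} Take $a=1$ in Lemma~\ref{lemma12}; it lies in the first branch, so $r_1=1-(1-\sinh^{-1}1)=\sinh^{-1}1$. Hence
\[
\mathbb D\!\left(1;\sinh^{-1}1\right)\subseteq\varphi_h(\mathbb D).
\]
Now apply Theorem~\ref{main theorem} with
\[
\delta=1-\sinh^{-1}1=1-\log(1+\sqrt2)\approx 0.1186,
\]
which indeed satisfies $0<\delta<1$. Since $1-\delta=\sinh^{-1}1$ and $n+1-\delta=n+\sinh^{-1}1$, the expressions $R_1,R_2$ of the theorem become exactly those in the statement. Thus $R_{\mathcal{ST}_h}(\Q)\ge R_1$ for $0<M\le n(n+1)$, and $R_{\mathcal{CV}_h}(\Q)\ge R_2$ for $0<M\le n$.

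\emph{Sharpness.} Suppose the bracketed quantity is less than $1$, and call it $\rho$. Use $f_0(z)=z+\frac{M}{n(n+1)}z^{n+1}$ from \eqref{eq:sharp-stt}, and choose $z_0$ with $|z_0|=\rho$ and $z_0^n=-\rho^n$.

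For starlikeness, the definition of $\rho$ gives $M\rho^n\bigl(n+\sinh^{-1}1\bigr)=\sinh^{-1}(1)\,n(n+1)$. Rearranging,
\[
\frac{nM\rho^n}{n(n+1)-M\rho^n}=\sinh^{-1}1.
\]
Then \eqref{eq:sharp-st} gives
\[
\frac{z_0f_0'(z_0)}{f_0(z_0)}=1-\sinh^{-1}1=\varphi_h(-1).
\]

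For convexity, the definition of $\rho$ gives $M\rho^n\bigl(n+\sinh^{-1}1\bigr)=\sinh^{-1}(1)\,n$, so
\[
\frac{nM\rho^n}{n-M\rho^n}=\sinh^{-1}1.
\]
Then \eqref{eq:sharp-cv} gives
\[
1+\frac{z_0f_0''(z_0)}{f_0'(z_0)}=1-\sinh^{-1}1=\varphi_h(-1).
\]

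\emph{Main obstacle.} The one point needing care is that $\varphi_h(-1)=1-\sinh^{-1}1$ really lies on $\partial\varphi_h(\mathbb D)$, so that the radius cannot be enlarged. Since $\varphi_h$ is univalent on $\mathbb D$ and continuous on $\overline{\mathbb D}$ (the branch points of $\sinh^{-1}$ are $\pm i$, away from $-1$), the image of $-1\in\partial\mathbb D$ is a boundary point of $\varphi_h(\mathbb D)$. This is also consistent with Lemma~\ref{lemma12}, whose admissible interval for $a$ has left endpoint $1-\sinh^{-1}1$.

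If $\rho\ge1$, the radius equals $1$ and there is nothing further to prove.
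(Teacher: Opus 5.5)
Your proposal is correct and follows the paper's proof essentially verbatim: Lemma~\ref{lemma12} at $a=1$ gives $\mathbb D(1;\sinh^{-1}1)\subseteq\varphi_h(\mathbb D)$, Theorem~\ref{main theorem} with $\delta=1-\sinh^{-1}1$ gives the lower bounds, and the extremal function $f_0$ evaluated at $z_0$ with $z_0^n=-\rho^n$ attains $\varphi_h(-1)=1-\sinh^{-1}1$. The one addition is your justification, via univalence of $\varphi_h$ and its continuity at $-1$, that this value lies on $\partial\varphi_h(\mathbb D)$, which the paper simply asserts.
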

	
	\begin{proof}
	By Lemma~\ref{lemma12},
	$
	\mathbb D\!\left(1;\sinh^{-1}(1)\right)\subset\varphi_h(\mathbb{D})
	$.
	Hence, applying Theorem~\ref{main theorem} with
	\(
	\delta=1-\sinh^{-1}(1)
	\)
	gives the asserted lower bounds for the
	$\mathcal {ST}_{h}$- and $\mathcal {CV}_{h}$-radii.
  We now verify that these estimates are sharp. Choose $z_0 \in \D$ satisfying
		$
		|z_0|=\rho, $ and $
		z_0^n=-\rho^n.
		$ consider the extremal function $f_0:\D \rightarrow \mathbb C$ defined by
\eqref{eq:sharp-stt}.
			We first consider the
	$\mathcal {ST}_{h}$-radius. Let
	\[
	\rho
	=
	\left(
	\frac{\sinh^{-1}(1)\,n(n+1)}
	{M\left(n+\sinh^{-1}(1)\right)}
	\right)^{1/n}
	<1,
	\]
	Using the definition of $\rho$, we obtain $M\rho^n\left(n+\sinh^{-1}(1)\right)=\sinh^{-1}(1)\,n(n+1),$
	and hence
	\[
	\frac{nM\rho^n}
	{n(n+1)-M\rho^n}
	=
	\sinh^{-1}(1).
	\]
	Evaluating \eqref{eq:sharp-st} at $z=z_0$ gives
	\[
	\frac{z_0f_0'(z_0)}{f_0(z_0)}
	=
	1-
	\frac{nM\rho^n}
	{n(n+1)-M\rho^n}
	=
	1-\sinh^{-1}(1)
	=
	\varphi_h(-1)\in
		\partial\varphi_h(\mathbb{D}).
	\]
	which establishes the sharpness of  the
	$\mathcal {ST}_{h}$-radius.
	
	The proof for the
	$\mathcal {CV}_{h}$-radius follows similarly. Let
	\[
	\rho
	=
	\left(
	\frac{\sinh^{-1}(1)\,n}
	{M\left(n+\sinh^{-1}(1)\right)}
	\right)^{1/n}
	<1.
	\]
	Then $
	M\rho^n\left(n+\sinh^{-1}(1)\right)
	=
	\sinh^{-1}(1)\,n$,
	which gives
	\[
	\frac{nM\rho^n}
	{n-M\rho^n}
	=
	\sinh^{-1}(1).
	\]
	Evaluating \eqref{eq:sharp-cv} at $z=z_0$ gives
	\[
	1+\frac{z_0f_0''(z_0)}{f_0'(z_0)}
	=
	1-
	\frac{nM\rho^n}
	{n-M\rho^n}
	=
	1-\sinh^{-1}(1)
	=
	\varphi_h(-1)\in
		\partial\varphi_h(\mathbb{D}).
	\]
	which proves that the
	$\mathcal {CV}_{h}$-radius is sharp.
	\end{proof}

	\begin{remark}
	For the particular class $\mathcal{Q}_1(1)$, corresponding to $n=1$ and $M=1$, the
	radius formulas in Corollary~\ref{corollary12} reduce to
	\[
	R_{\mathcal {ST}_h}(\mathcal{Q}_1(1))
	=
	\frac{2\sinh^{-1}(1)}
	{1+\sinh^{-1}(1)}
	\approx
	0.936947,
\] and \[
 	R_{\mathcal {CV}_h}(\mathcal{Q}_1(1))
	=
	\frac{\sinh^{-1}(1)}
	{1+\sinh^{-1}(1)}
	\approx
	0.468473.
	\]
	\end{remark}

	\begin{figure}[!htbp]
		\centering
		\begin{subfigure}{0.48\textwidth}
		\centering
	\begin{tikzpicture}
		\begin{axis} [
		    axis lines=center,
		    width=8.8cm,
		    height=8.5cm,
		    xmin=-0.3, xmax=2.2,
		    ymin=-1.2, ymax=1.2,
		    xtick={0.5,1.0,1.5,2.0},
		    ytick={-1.0,-0.5,0.5,1.0},
		    thick,
		    scale=0.7
		]
		   \addplot[
		   			domain=0:360,
		   			samples=400,
		   			smooth,
		   			color=black,
		   			line width=1pt
		   		]
		   		({0.836 + 0.436*cos(x)},
		   		{0.436*sin(x)});
		
		\addplot[
					domain=0:360,
					samples=400,
					smooth,
				color=black,
					line width=1pt
				]
				({1 + 0.6*cos(x)},
				{0.6*sin(x)});	
		\addplot[
		    domain=-180:180,
		    samples=400,
		    smooth,
		    color=black,
		    line width=1pt,
		]
		({1 + 0.8 * cos(x) + 0.2 * cos(4*x)},
		 {0.8 * sin(x) + 0.2 * sin(4*x)});
		
		\end{axis}
		\end{tikzpicture}
		\caption{$\mathcal{ST}_{3L}$-Radius}
		\end{subfigure}
		\begin{subfigure}{0.48\textwidth}
		\centering
		\begin{tikzpicture}
		\begin{axis} [
		    axis lines=center,
		    width=9cm,
		    height=9.5cm,
		    xmin=-0.3, xmax=2.5,
		    ymin=-1.5, ymax=1.5,
		    xtick={0.5,1.0,1.5,2.0},
		    ytick={-1.0,-0.5,0.5,1.0},
		    thick,
		    scale=0.6
		]
		   \addplot[
		   			domain=0:360,
		   			samples=400,
		   			smooth,
		   			color=black,
		   			line width=1pt
		   		]
		   		({0.718 + 0.6*cos(x)},
		   		{0.6*sin(x)});
		
		\addplot[
					domain=0:360,
					samples=400,
					smooth,
				color=black,
					line width=1pt
				]
				({1 + 0.881374*cos(x)},
				{0.881374*sin(x)});	
		\addplot[
		    domain=-3.14159265:3.14159265,
		    samples=300,
		    smooth,
		    color=black,
		    line width=1pt,
		]
		({qrf(x)}, {qif(x)});	
		\end{axis}
		\end{tikzpicture}
		\caption{$\mathcal{ST}_{h}$-Radius}
		\end{subfigure}
		\caption{Radii associated with $\varphi_{3L}(z)=1+(4/5)z+(1/5)z^4 $ and  $\varphi_h(z)=1+\sinh^{-1}z$ for the class $\Q$.}
		\label{p1f6}
			\end{figure}
	

	In 2023, Mundalia and Kumar~\cite{MR4620985} introduced a family of Ma--Minda starlike functions  associated with the hyperbolic cosine function.  Let
		$	\mathcal{ST}_{\varrho}
		=
		\mathcal{ST}(\varphi_{\varrho})
		$
		and
		$
		\mathcal{CV}_{\varrho}
		=
		\mathcal{CV}(\varphi_{\varrho})
		$ where
		$	\varphi_{\varrho}(z)=\cosh\sqrt{z}$.  Our results rely on Lemma~2.1 of~\cite{MR4620985}.
	
	\begin{lemma}\label{lemma13}\cite[Lemma 2.1]{MR4620985}
			For $ \cos1<a<\cosh1,\ \{w:|w-a|<r_a\}\subseteq \varphi_{\varrho }(\mathbb{D})$, where $r_a$ is
			\begin{align*}
				r_a &=
				\begin{dcases}
					a-\cos1,\ & \ \cos1<a\leq\frac{\cos1+\cosh1}{2} \\
					\cosh1-a,\ & \frac{\cos1+\cosh1}{2}\leq a<\cosh1\\
				\end{dcases}
			\end{align*}
		\end{lemma}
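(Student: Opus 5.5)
The plan is to reduce the inclusion to a distance estimate from $a$ to the boundary curve $\partial\varphi_{\varrho}(\mathbb{D})$, and then to use the fact that $a^2$ cancels to reduce everything to a single critical value of $a$. Since $\varphi_\varrho(\mathbb{D})$ is a Ma--Minda domain symmetric about the real axis, and $a\in(\cos 1,\cosh 1)$ lies in it, the disk $\{w:|w-a|<r_a\}$ is contained in $\varphi_\varrho(\mathbb{D})$ as soon as $r_a\le \operatorname{dist}(a,\partial\varphi_\varrho(\mathbb{D}))$.

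First I parametrize the upper half of the boundary. Writing $\sqrt{e^{it}}=e^{is}=x+iy$ with $x=\cos s$, $y=\sin s$ and $s\in[0,\pi/2]$, we get
\[
w(s)=u(s)+iv(s)=\cosh x\cos y+i\,\sinh x\sin y .
\]
The endpoints are $w(0)=\cosh 1$ and $w(\pi/2)=\cos 1$, which are the real boundary points. A convenient identity is $u^2+v^2=\cosh^2x-\sin^2y=\tfrac12(\cosh 2x+\cos 2y)$. Hence the squared distance is
\[
D_a(s)=|w(s)-a|^2=\tfrac12(\cosh 2x+\cos 2y)-2au+a^2 .
\]

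The key structural observation is that $D_a(s)-r_a^2$ is affine in $a$ on each of the two ranges in the definition of $r_a$, because the $a^2$ terms cancel. So it suffices to check nonnegativity at the endpoints of each range.
\begin{itemize}
\item At $a=\cos 1$ the required inequality is $D_a(s)\ge 0$, which is trivial.
\item At $a=\cosh 1$ the required inequality is likewise $D_a(s)\ge 0$, which is trivial.
\item Both ranges share the endpoint $a_0=(\cos 1+\cosh 1)/2$, where $r_{a_0}=(\cosh 1-\cos 1)/2$.
\end{itemize}
The whole lemma therefore reduces to one statement: the disk with diameter $[\cos 1,\cosh 1]$ lies in $\varphi_\varrho(\mathbb{D})$. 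Equivalently,
\[
F(s):=\tfrac12(\cosh(2\cos s)+\cos(2\sin s))-(\cos 1+\cosh 1)\cosh(\cos s)\cos(\sin s)+\cos 1\cosh 1\ \ge 0,\qquad s\in[0,\pi/2].
\]
Here I used $a_0^2-r_{a_0}^2=\cos 1\cosh 1$. Open boundary points are fine because the disks in the lemma are open.

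The main obstacle is proving this one-variable inequality. Equality holds at both endpoints $s=0$ and $s=\pi/2$. I would first compute $F'(s)$ and show that $F$ increases from $s=0$ and decreases to $s=\pi/2$, with no interior zero. To do this I would factor $F(s)=(u-\cos1)(u-\cosh1)+v^2$, which is equivalent to the expression above by the $u^2+v^2$ identity, so that the task becomes showing $v^2\ge (\cosh 1-u)(u-\cos 1)$. Then I would use Taylor bounds for $\cosh$, $\sinh$, $\cos$ and $\sin$ in $x=\cos s$, $y=\sin s$. I expect the comparison to be tight only near the endpoints, where it can be handled by local expansion. If a clean analytic argument resists, the fallback is a rigorous interval or monotonicity check on subintervals of $[0,\pi/2]$, using explicit bounds on $F'$.
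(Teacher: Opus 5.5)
The paper gives no proof of this lemma; it quotes it from Mundalia--Kumar, so your argument has to stand on its own. Your reduction is correct.

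\begin{itemize}
\item With $w(s)=u+iv=\cosh(e^{is})$, the identity $u^2+v^2=\cosh^2x-\sin^2y$ holds.
\item $|w(s)-a|^2-r_a^2$ is affine in $a$ on each branch. More simply, $\mathbb{D}(a;a-\cos1)\subseteq\mathbb{D}(a_0;a_0-\cos1)$ for $a\le a_0$, since the two disks are internally tangent at $\cos1$; the same holds at $\cosh1$.
\item $a_0^2-r_{a_0}^2=\cos1\cosh1$.
\end{itemize}

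So the lemma is equivalent to
\[
F(s)=(u-\cos1)(u-\cosh1)+v^2\ge0 \quad\text{on } [0,\pi/2],
\]
that is, to the disk with diameter $[\cos1,\cosh1]$ lying in $\varphi_\varrho(\mathbb{D})$.

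\textbf{The gap.} This inequality is the entire content of the lemma, and you do not prove it; you only list techniques you might try. That matters here, because the inequality is extremely tight. $F$ has double zeros at both ends. Expanding, with $\tau=\pi/2-s$ and $r_{a_0}=(\cosh1-\cos1)/2\approx0.5014$,
\[
F(s)=e\Bigl(\frac{\sinh^2 1}{e}-r_{a_0}\Bigr)s^2+O(s^4),\qquad F=(\sin1+\cos1)\Bigl(\frac{\sin^2 1}{\sin1+\cos1}-r_{a_0}\Bigr)\tau^2+O(\tau^4).
\]
The brackets compare $r_{a_0}$ with the radii of curvature of $\partial\varphi_\varrho(\mathbb{D})$ at $\cosh1$ and at $\cos1$. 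These are only about $0.5081$ and $0.5124$, so the leading coefficients are about $0.018$ and $0.015$. Numerically, $\max_{[0,\pi/2]}F$ is only about $4\times10^{-3}$, attained near $s\approx\pi/4$.

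Loose Taylor bounds in $x=\cos s$, $y=\sin s$ will not survive margins this small. Proving that $F$ first increases and then decreases via $F'$ is also harder than what you need; you need positivity, not monotonicity.

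To close the argument you must:
\begin{enumerate}
\item[(i)] verify the two endpoint coefficient inequalities exactly; the first is equivalent to $e\cos1>\tfrac32-\tfrac12e^{-2}$;
\item[(ii)] give explicit remainder bounds showing $F>0$ on $(0,s_1]$ and on $[s_2,\pi/2)$;
\item[(iii)] certify a positive lower bound for $F$ on $[s_1,s_2]$, for example by evaluating on a grid together with an explicit bound on $|F'|$.
\end{enumerate}

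As it stands, your proposal is a correct reduction, not a proof.
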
	
	\begin{corollary}\label{corollary13}
Let
	\[
	R_1
	=
	\min\left\{
	1,
	\left(
	\frac{(1-\cos1)n(n+1)}
	{M(n+1-\cos1)}
	\right)^{1/n}
	\right\},
	\]
	and
	\[
	R_2
	=
	\min\left\{
	1,
	\left(
	\frac{(1-\cos1)n}
	{M(n+1-\cos1)}
	\right)^{1/n}
	\right\}.
	\]
	The $\mathcal {ST}_{\varrho}$-radius of the class $\Q$ is $R_1$ and  the $\mathcal {CV}_{\varrho}$-radius is $R_2$.
	\end{corollary}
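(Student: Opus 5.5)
The plan follows the same template as the preceding corollaries: a lower bound from Theorem~\ref{main theorem}, and sharpness from the polynomial $f_0$ in \eqref{eq:sharp-stt}.

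\emph{Lower bound.} First I extract the disk inclusion from Lemma~\ref{lemma13} at $a=1$. Since $\cos 1\approx 0.5403$ and $\cosh 1\approx 1.5431$, their midpoint is about $1.0417>1$. Hence $a=1$ lies in the first range of the lemma, and $r_1=1-\cos 1$. This gives $\mathbb D(1;1-\cos1)\subseteq\varphi_{\varrho}(\mathbb D)$. Applying Theorem~\ref{main theorem} with $\delta=\cos 1\in(0,1)$ yields $1-\delta=1-\cos1$ and $n+1-\delta=n+1-\cos1$. These are exactly the expressions appearing in $R_1$ and $R_2$, so $R_{\mathcal{ST}_\varrho}(\Q)\ge R_1$ for $0<M\le n(n+1)$ and $R_{\mathcal{CV}_\varrho}(\Q)\ge R_2$ for $0<M\le n$.

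\emph{Sharpness.} I take $f_0(z)=z+Mz^{n+1}/(n(n+1))$, which lies in $\Q$, and pick $z_0$ with $|z_0|=\rho$ and $z_0^n=-\rho^n$.

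For the starlikeness radius, set $\rho=\bigl((1-\cos1)n(n+1)/(M(n+1-\cos1))\bigr)^{1/n}<1$. Then
\[
M\rho^n(n+1-\cos1)=(1-\cos1)n(n+1),
\]
which rearranges to
\[
\frac{nM\rho^n}{n(n+1)-M\rho^n}=1-\cos1 .
\]
By \eqref{eq:sharp-st} this gives $z_0f_0'(z_0)/f_0(z_0)=\cos1$.

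For the convexity radius, set $\rho=\bigl((1-\cos1)n/(M(n+1-\cos1))\bigr)^{1/n}<1$. The same computation with \eqref{eq:sharp-cv} gives $1+z_0f_0''(z_0)/f_0'(z_0)=\cos 1$.

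\emph{Main obstacle.} The step needing genuine care is confirming that $\cos1$ is a boundary point of $\varphi_\varrho(\mathbb D)$. The function $\cosh\sqrt{z}=\sum z^k/(2k)!$ is entire and single-valued, and $\varphi_\varrho(-1)=\cosh(i)=\cos 1$. Since $-1\in\partial\mathbb D$ and $\varphi_\varrho$ is univalent on a neighbourhood of $\overline{\mathbb D}$ as a Ma--Minda function, it follows that $\varphi_\varrho(-1)=\cos1\in\partial\varphi_\varrho(\mathbb D)$. This is consistent with Lemma~\ref{lemma13}, where $r_a\to0$ as $a\to\cos1$.

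Consequently, for any radius exceeding $\rho$, points near $z_0$ push the expression outside $\varphi_\varrho(\mathbb D)$. Indeed, the real quantity $1-nMr^n/(n(n+1)-Mr^n)$ decreases strictly below $\cos1$, and the domain meets the real axis to the left of $1$ only on $(\cos1,1]$. This shows that $R_1$ and $R_2$ cannot be improved.
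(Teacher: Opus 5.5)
Your proposal is correct and follows the paper's proof essentially verbatim: Lemma~\ref{lemma13} at $a=1$ (first branch, since $1<(\cos1+\cosh1)/2$) gives $\mathbb D(1;1-\cos1)\subseteq\varphi_\varrho(\mathbb D)$, Theorem~\ref{main theorem} with $\delta=\cos1$ gives the lower bounds, and $f_0$ evaluated at $z_0^n=-\rho^n$ gives sharpness. One minor correction: being a Ma--Minda function does not by itself give univalence on a neighbourhood of $\overline{\mathbb D}$, but you do not need it, since univalence on $\mathbb D$ together with real coefficients already forces $\varphi_\varrho(\mathbb D)\cap\mathbb R=\varphi_\varrho((-1,1))=(\cos1,\cosh1)$, which shows $\cos1\in\partial\varphi_\varrho(\mathbb D)$ and also justifies your final claim about radii exceeding $\rho$.
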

	
	\begin{proof}
	From Lemma~\ref{lemma13},
	$
	\mathbb D\!\left(1;1-\cos1\right)\subset\varphi_{\varrho}(\mathbb{D}).
	$
	Applying Theorem~\ref{main theorem} with
	\(
	\delta=\cos1
	\)
	gives the asserted lower bounds for the
	$\mathcal {ST}_{\varrho}$- and $\mathcal {CV}_{\varrho}$-radii.
		We next establish the sharpness of these estimates. Choose $z_0 \in \D$ satisfying
			$
			|z_0|=\rho, $ and $
			z_0^n=-\rho^n.
			$ consider the extremal function $f_0:\D \rightarrow \mathbb C$ defined by
\eqref{eq:sharp-stt}.  Consider first the
	$\mathcal {ST}_{\varrho}$-radius. Let
	\[
	\rho
	=
	\left(
	\frac{(1-\cos1)n(n+1)}
	{M(n+1-\cos1)}
	\right)^{1/n}
	<1,
	\]
	Using the definition of $\rho$, we obtain
	$
	M\rho^n(n+1-\cos1)
	=
	(1-\cos1)n(n+1),
	$
	which implies
	\[
	\frac{nM\rho^n}
	{n(n+1)-M\rho^n}
	=
	1-\cos1.
	\]
	Hence, by \eqref{eq:sharp-st}, evaluated at $z=z_0$, we have
	\[
	\frac{z_0f_0'(z_0)}{f_0(z_0)}
	=
	1-
	\frac{nM\rho^n}
	{n(n+1)-M\rho^n}
	=
	\cos1
	=
	\varphi_{\varrho}(-1)\in
		\partial\varphi_{\varrho}(\mathbb{D}).
	\]
	which establishes the sharpness of the
	$\mathcal {ST}_{\varrho}$-radius.
	
	The proof for the
	$\mathcal {CV}_{\varrho}$-radius is analogous. Let
	\[
	\rho
	=
	\left(
	\frac{(1-\cos1)n}
	{M(n+1-\cos1)}
	\right)^{1/n}
	<1.
	\]
	Again,$
	M\rho^n(n+1-\cos1)
	=
	(1-\cos1)n,
	$
	and hence
	\[
	\frac{nM\rho^n}
	{n-M\rho^n}
	=
	1-\cos1.
	\]
	Hence, by \eqref{eq:sharp-cv}, evaluated at $z=z_0$, we have
	\[
	1+\frac{z_0f_0''(z_0)}{f_0'(z_0)}
	=
	1-
	\frac{nM\rho^n}
	{n-M\rho^n}
	=
	\cos1
	=
	\varphi_{\varrho}(-1)\in
		\partial\varphi_{\varrho}(\mathbb{D}).
	\]
	which proves that the
	$\mathcal {CV}_{\varrho}$-radius is  sharp.
	\end{proof}
	
	\begin{remark}
	For the particular class $\mathcal{Q}_1(1)$, corresponding to $n=1$ and $M=1$, the
	radius formulas in Corollary~\ref{corollary13} reduce to
	\[
	R_{\mathcal {ST}_{\varrho}}(\mathcal{Q}_1(1))
	=
	\frac{2(1-\cos1)}
	{2-\cos1}
	\approx
	0.629853,
\] and \[
	R_{\mathcal {CV}_{\varrho}}(\mathcal{Q}_1(1))
	=
	\frac{1-\cos1}
	{2-\cos1}
	\approx
	0.314927.
	\]
	\end{remark}

\section{Lower Bounds}

The method developed in Theorem~\ref{main theorem} applies to Ma--Minda
domains for which the largest disk centered at $1$ is of the form
$
\mathbb D(1,1-\delta),
$
where $\delta$ denotes the point at which the boundary of the domain
intersects the real axis to the left of $1$. In this situation, the
extremal condition is determined by the left endpoint of the disk.
The Bernoulli lemniscate domain
$
\varphi_L(\mathbb{D})=\{w\in\mathbb C:|w^2-1|<1\}
$
is exceptional, since its largest disk centered at $1$ has
radius $\sqrt2-1$. Hence the extremal condition is determined by the right
endpoint of the disk rather than the left endpoint, so the proof of
Theorem~\ref{main theorem} is not applicable and a separate argument is
required.

In 1996, Sok\'o{\l} and Stankiewicz~\cite{MR1473947} introduced the Ma--Minda classes associated with the right-half of the Bernoulli lemniscate (see Khatter \textit{et al.}~\cite{MR3902942} for a generalization). These classes correspond to the function
$\varphi_{L}(z)=\sqrt{1+z}$.
Specifically, $\mathcal{ST}_{L}=\mathcal{ST}(\varphi_{L})$ and $\mathcal{CV}_{L}=\mathcal{CV}(\varphi_{L})$. We shall also use the inclusion result established in \cite[Lemma~2.3]{MR3902942}.

The following theorem establishes lower bounds for the
$\mathcal {ST}_L$-radius and $\mathcal {CV}_L$-radius of the class $\Q$.
	
	\begin{lemma}\label{lemma14}\cite{MR3902942,MR4314116}\label{limlemma}
		For $0<a<\sqrt{2},\ \{w:|w-a|<r_a\}\subseteq \varphi_{L}(\mathbb{D})$, where $r_a$ is
		\begin{align*}
			r_a &=
			\begin{dcases}
				\sqrt{\sqrt{(1-a)(1+a)}-(1-a)(1+a)},\ & \ 0<a\leq\frac{2\sqrt{2}}{3},\\
				\sqrt{2}-a,\ & \frac{2\sqrt{2}}{3}\leq  a<\sqrt{2}.
			\end{dcases}
		\end{align*}
	\end{lemma}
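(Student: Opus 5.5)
The plan is to compute $r_a$ directly as the Euclidean distance from $a$ to the boundary curve $\partial\varphi_L(\mathbb D)=\{w:|w^2-1|=1\}$. Since $0<a<\sqrt2$ means $a\in\varphi_L(\mathbb D)$, and a connected open disk centred at an interior point that avoids the boundary stays in the domain, this distance is the largest admissible radius. By symmetry about the real axis and the fact that points with $\RE w<0$ are farther from $a>0$, it suffices to minimise $|w-a|$ over the right loop of the lemniscate.

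\textbf{Reduction to one variable.} Write $w=u+iv$ and $s=u^2+v^2$. The boundary equation $|w^2-1|=1$ is equivalent to $(u^2+v^2)^2=2(u^2-v^2)$, so $u^2-v^2=s^2/2$. Combined with $u^2+v^2=s$, this gives
\[
u^2=\frac{s^2+2s}{4},\qquad v^2=\frac{2s-s^2}{4},\qquad 0\le s\le 2.
\]
On the right loop $u=\tfrac12\sqrt{s^2+2s}$, hence $|w-a|^2=s+a^2-a\sqrt{s^2+2s}$. Substituting $t=s+1\in[1,3]$, I will minimise
\[
g(t)=t-1+a^2-a\sqrt{t^2-1},\qquad 1\le t\le 3 .
\]

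\textbf{Minimisation.} A direct computation gives $g''(t)=a(t^2-1)^{-3/2}>0$, so $g$ is strictly convex on $(1,3]$. Also $g'(t)=1-at/\sqrt{t^2-1}\to-\infty$ as $t\to1^+$, so the minimum is not at $t=1$.
\begin{itemize}[label={}]
\item If $a<1$, the equation $g'(t)=0$ gives $t^2-1=a^2t^2$, i.e.\ $t_0=(1-a^2)^{-1/2}$. The condition $t_0\le 3$ is equivalent to $1-a^2\ge 1/9$, i.e.\ $a\le 2\sqrt2/3$. In that range the minimum is
\[
g(t_0)=t_0(1-a^2)-(1-a^2)=\sqrt{(1-a)(1+a)}-(1-a)(1+a),
\]
which yields the first branch of $r_a$.
\item If $a\ge 2\sqrt2/3$ (this includes $a\ge1$, where $g'<0$ throughout), convexity forces the minimum to the endpoint $t=3$, i.e.\ $s=2$, the boundary point $w=\sqrt2$. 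There $g(3)=2+a^2-2\sqrt2\,a=(\sqrt2-a)^2$, giving $r_a=\sqrt2-a$.
\end{itemize}
The two formulas agree at $a=2\sqrt2/3$, where both equal $\sqrt2/3$, which serves as a consistency check.

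\textbf{Expected obstacle.} The step needing most care is justifying that the minimum over the whole boundary is captured by this parametrisation. This requires two checks: the left loop ($u<0$) is never closer to $a$, and the map $s\mapsto w$ covers the right loop exactly once in each half-plane. Both follow since $|w-a|^2=s+a^2-2au$ is smallest when $u\ge0$ for fixed $s$. I also need to confirm that the first-branch value is positive for $0<a\le 2\sqrt2/3$. Writing $x=1-a^2$, this value is $\sqrt{x}-x$, which is positive because $x\in[1/9,1)$.
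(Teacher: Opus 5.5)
The paper does not prove this lemma; it quotes it from \cite{MR3902942,MR4314116}. So your argument is not competing with an internal proof but supplying one, and it is correct. The key steps check out.
\begin{itemize}
\item The identity $|w^2-1|^2=(u^2+v^2)^2-2(u^2-v^2)+1$ gives the boundary equation.
\item The formulas $u^2=(s^2+2s)/4$ and $v^2=(2s-s^2)/4$ force $0\le s\le 2$.
\item Convexity holds, since $g''(t)=a(t^2-1)^{-3/2}>0$.
\item The critical point $t_0=(1-a^2)^{-1/2}$ lies in $(1,3]$ exactly when $a\le 2\sqrt2/3$, and there $g(t_0)=\sqrt{1-a^2}-(1-a^2)$.
\item For $a\ge 2\sqrt2/3$, either $t_0\ge 3$ or there is no critical point (when $a\ge 1$). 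Hence $g'\le 0$ on $(1,3]$ and the minimum is $g(3)=(\sqrt2-a)^2$.
\end{itemize}

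Two small remarks. First, the boundary of $\varphi_L(\D)$ is only the right loop $\{|w^2-1|=1,\ \RE w\ge 0\}$, because the principal branch of $\sqrt{1+z}$ maps $\D$ onto the right component. Your comparison with the left loop is therefore needed only under the paper's looser description $\{|w^2-1|<1\}$; it is harmless either way. Second, your ``exactly once'' covering claim is not needed. It suffices that every point of the right loop has $s=|w|^2\in[0,2]$ and $u=\tfrac12\sqrt{s^2+2s}$, so its squared distance to $a$ is $g(s+1)$, and that every $s\in[0,2]$ is realized by some boundary point.

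What your route buys over the bare citation is sharpness. You show that $r_a$ equals the distance from $a$ to $\partial\varphi_L(\D)$, so the disk is the largest one centred at $a$. The value $a=1$ lies in the second range, since $1>2\sqrt2/3$. There your computation gives $r_1=\sqrt2-1$, with the nearest boundary point being the right endpoint $\sqrt2$. Section~3 of the paper asserts exactly this without justification: that the largest disk about $1$ has radius $\sqrt2-1$ and that the right endpoint governs the extremal condition. The lemma as stated is only an inclusion and does not give that maximality by itself; your proof does.
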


	\begin{figure}[htb!]
		\centering
		\begin{subfigure}{0.48\textwidth}
		\centering
		
	\begin{tikzpicture}
		\begin{axis} [
		    axis lines=center,
		    width=11.3cm,
		    height=8.5cm,
		    xmin=-0.3, xmax=1.75,
		    ymin=-0.75, ymax=0.75,
		    xtick={0.5,1.0,1.5},
		    ytick={-0.5,0.5},
		    thick,
		    scale=0.55
		]
		   \addplot[
		   			domain=0:360,
		   			samples=400,
		   			smooth,
		   			color=black,
		   			line width=1pt
		   		]
		   		({0.889 + 0.349*cos(x)},
		   		{0.349*sin(x)});
		
		\addplot[
					domain=0:360,
					samples=400,
					smooth,
				color=black,
					line width=1pt
				]
				({1 + 0.4596*cos(x)},
				{0.4596*sin(x)});	
		\addplot[
		    domain=-180:180,
		    samples=400,
		    smooth,
		    color=black,
		    line width=1pt,
		]
		({cosh(cos(x)) * cos(deg(sin(x)))},
		 {sinh(cos(x)) * sin(deg(sin(x)))});
		
		\end{axis}
		\end{tikzpicture}
	
		\caption{$\mathcal{ST}_{\varrho}$-Radius}
		\end{subfigure}
		\begin{subfigure}{0.48\textwidth}
		\centering
		
	\begin{tikzpicture}
		\begin{axis} [
		    axis lines=center,
		    width=10.5cm,
		    height=9cm,
		    xmin=-0.3, xmax=1.6,
		    ymin=-0.8, ymax=0.8,
		    xtick={0.5,1,1.4},
		    ytick={-0.2,-0.6,0.2,0.6,1.0},
		    thick,
		    scale=0.5
		]
		   \addplot[
		   			domain=0:360,
		   			samples=400,
		   			smooth,
		   			color=black,
		   			line width=1pt
		   		]
		   		({0.906+ 0.320*cos(x)},
		   		{0.320*sin(x)});
		
		\addplot[
					domain=0:360,
					samples=400,
					smooth,
				color=black,
					line width=1pt
				]
				({1 + 0.414214*cos(x)},
				{0.414214*sin(x)});	
		\addplot[
		    domain=-180:180,
		    samples=400,
		    smooth,
		    color=black,
		    line width=1pt,
		]
		({sqrt(2*cos(x/2)) * cos(x/4)},
		 {sqrt(2*cos(x/2)) * sin(x/4)});
		
		\end{axis}
		\end{tikzpicture}
		\caption{$\mathcal{ST}_{L}$-Radius}
		\end{subfigure}
		\caption{Radii associated with $\varphi_{\varrho}(z)=\cosh(\sqrt z)$ and $\varphi_L(z)=\sqrt{1+z}$ for the class $\Q$.}
		\label{p1f7}
			\end{figure}

	\begin{theorem}\label{thm:lemniscate}
	Let $\varphi_{L}(z)=\sqrt{1+z}$,
	$
	\mathbb D(1;\sqrt2-1)\subseteq \varphi_L(\mathbb{D})
	$,
	Let \[
	R_1=
	\min\left\{
		1,
		\left(
		\frac{(\sqrt2-1)n(n+1)}
		{M(n+\sqrt2-1)}
		\right)^{1/n}
		\right\},
	\] and \[
	R_2=\min\left\{
		1,
		\left(
		\frac{(\sqrt2-1)n}
		{M(n+\sqrt2-1)}
		\right)^{1/n}
		\right\}.
	\]
	The $\mathcal{ST}_L$-radius of the class $\Q$ is at least $R_1$ whenever $0<M\le n(n+1)$, whereas the $\mathcal{CV}_L$-radius is at least $R_2$ whenever $0<M\le n$.
	\end{theorem}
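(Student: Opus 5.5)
The argument follows the proof of Theorem~\ref{main theorem} exactly, with the constant $1-\delta$ replaced by $\sqrt2-1$. First we record the disk inclusion. Lemma~\ref{lemma14} with $a=1$ gives $r_1=\sqrt2-1$, since $1\ge 2\sqrt2/3$. Hence $\mathbb D(1;\sqrt2-1)\subseteq\varphi_L(\mathbb D)$, which is the hypothesis in the statement.

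For the starlikeness part, take $f\in\Q$ with $0<M\le n(n+1)$. Lemma~\ref{mainlemma} gives $zf'(z)/f(z)\in\mathbb D(1;c_1(r))$ for $|z|\le r$, where
\[
c_1(r)=\frac{nMr^n}{n(n+1)-Mr^n}.
\]
Because the center $1$ is fixed, it is enough to show $c_1(r)\le\sqrt2-1$. The function $c_1$ is strictly increasing on $(0,1)$, because its denominator stays positive and decreases while its numerator increases. So the inequality $c_1(r)\le\sqrt2-1$ is equivalent to
\[
Mr^n\bigl(n+\sqrt2-1\bigr)\le(\sqrt2-1)n(n+1).
\]
We then split into the same two cases as before.
\begin{enumerate*}[label=(\roman*)]
\item If $M(n+\sqrt2-1)>(\sqrt2-1)n(n+1)$, the equality case has a unique root in $(0,1)$, namely $R_1=\bigl((\sqrt2-1)n(n+1)/(M(n+\sqrt2-1))\bigr)^{1/n}$.
\item Otherwise the inequality holds for every $r<1$, and $R_1=1$.
\end{enumerate*}
In both cases, for $|z|\le R_1$ we get $zf'(z)/f(z)\in\mathbb D(1;\sqrt2-1)\subseteq\varphi_L(\mathbb D)$, so the $\mathcal{ST}_L$-radius is at least $R_1$. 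The convexity part is identical. For $0<M\le n$, use $c_2(r)=nMr^n/(n-Mr^n)$. The equation $c_2(r)=\sqrt2-1$ reduces to $Mr^n(n+\sqrt2-1)=(\sqrt2-1)n$, and this yields $R_2$.

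Equivalently, one could invoke Theorem~\ref{main theorem} formally with $\delta=2-\sqrt2$. This is legitimate because $1-\delta=\sqrt2-1$ and $0<\delta<1$, and that theorem only uses the inclusion $\mathbb D(1;1-\delta)\subseteq\varphi(\mathbb D)$. The substitution $n+1-\delta=n+\sqrt2-1$ then gives exactly the stated $R_1$ and $R_2$. This route is the most economical, and I would mention it explicitly.

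The substantive point is checking the disk inclusion at $a=1$ from Lemma~\ref{lemma14}, since this is the step where the lemniscate differs from the other domains. Here $1$ lies in the second branch, and the largest disk is limited by the right endpoint $\sqrt2=\varphi_L(1)$ rather than by $\varphi_L(-1)=0$. The same feature explains why the theorem gives only a lower bound. The extremal function $f_0$ pushes the image of the disk toward the left endpoint $1-c_1(r)$, which stays well inside $\varphi_L(\mathbb D)$ when $c_1(r)=\sqrt2-1$. So sharpness is not claimed, and no extremal construction is needed.
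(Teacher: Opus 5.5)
Your proposal is correct and follows the paper's proof essentially step for step. The ingredients are the same: the disk estimate from Lemma~\ref{mainlemma}, the inclusion $\mathbb D(1;\sqrt2-1)\subseteq\varphi_L(\mathbb D)$ from Lemma~\ref{lemma14} at $a=1$, monotonicity of $c_1$ and $c_2$, and the same two-case analysis of $Mr^n(n+\sqrt2-1)=(\sqrt2-1)n(n+1)$ (resp.\ $=(\sqrt2-1)n$). Your shortcut via Theorem~\ref{main theorem} with $\delta=2-\sqrt2$ is also legitimate, because that theorem uses only the inclusion $\mathbb D(1;1-\delta)\subseteq\varphi(\mathbb D)$ and never that $\delta=\varphi(-1)$; so the lower bound does not need the separate proof the paper calls necessary, and only the sharpness argument genuinely fails for $\varphi_L$.
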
	
	
	\begin{proof}
	Let $f\in\Q$ with $0<M\le n(n+1)$. By Lemma~\ref{mainlemma},
	$
	{zf'(z)}/{f(z)}
	\in
	\D(1;c_1(r)),
	$
	where
	\[
	c_1(r)=\frac{nMr^n}{n(n+1)-Mr^n}.
	\]
	By Lemma~\ref{limlemma},
		$
		\D(1,\sqrt2-1)\subseteq\varphi_L(\mathbb{D}).
		$
	it suffices to determine the largest radius $r$ for which
	$
	\D(1;c_1(r))
	\subseteq
	\D(1;\sqrt2-1)
	$. Let
	\[
	h(r)
	=
	1+c_1(r)
	=
	1+\frac{nMr^n}{n(n+1)-Mr^n}.
	\]
	$h(r)$ denotes the right endpoint on the real axis of the disk $\D(1;c_1(r))$.
	A direct computation gives
	\[
	h'(r)
	=
	\frac{Mn^3(n+1)r^{\,n-1}}
	{\left(n(n+1)-Mr^n\right)^2}
	>0,
	\qquad
	0<r<1.
	\]
	Hence $h$ is strictly increasing on $(0,1)$, meaning that the disk
	$\D(1;c_1(r))$ expands and its right endpoint moves monotonically to the right
	as $r$ increases. The largest admissible radius is determined when $h(r)$ reaches the
	rightmost boundary point $\sqrt2$ Thus we solve
	$
	h(r)=\sqrt2,
	$
	or equivalently,
	$
	c_1(r)=\sqrt2-1.
	$
	This reduces to
	\[
	\Phi(r)
	:=
	Mr^n(n+\sqrt2-1)
	-
	(\sqrt2-1)n(n+1).
	\]
	Whether this equation admits a solution in $(0,1)$ depends on the value of
	$M$, and hence we consider two cases.
	
	\medskip
	
	\noindent\textbf{Case (i).}
	Suppose that
	\[
	\frac{(\sqrt2-1)n(n+1)}
	{n+\sqrt2-1}
	<
	M
	\le
	n(n+1).
	\]
	Then
	\[
	\Phi(0)
	=
	-(\sqrt2-1)n(n+1)
	<0,
	\]
	whereas
	\[
	\Phi(1)
	=
	M(n+\sqrt2-1)
	-
	(\sqrt2-1)n(n+1)
	>0.
	\]
	Since $\Phi$ is continuous and strictly increasing on $(0,1)$, it has a unique
	zero in $(0,1)$, namely
	\[
	R_1
	=
	\left(
	\frac{(\sqrt2-1)n(n+1)}
	{M(n+\sqrt2-1)}
	\right)^{1/n}.
	\]
	
	\medskip
	
	\noindent\textbf{Case (ii).}
	Suppose that
	\[
	0<M
	\le
	\frac{(\sqrt2-1)n(n+1)}
	{n+\sqrt2-1}.
	\]
	Then
	\[
	\Phi(1)
	=
	M(n+\sqrt2-1)
	-
	(\sqrt2-1)n(n+1)
	\le0.
	\]
	Since $\Phi$ is increasing on $(0,1)$,
	$
	\Phi(r)\le0,
	$ for all $
	0<r<1.
	$
	Equivalently,
	\[
	c_1(r)
	=
	\frac{nMr^n}
	{n(n+1)-Mr^n}
	\le
	\sqrt2-1.
	\]
	Hence
	$
	\D(1;c_1(r))
	\subseteq
	\D(1;\sqrt2-1),
	$ for $
	0<r<1,
	$
	and therefore
	$
	R_1=1.
	$
	Combining the two cases,
	\[
	R_1
	=
	\min\left\{
	1,
	\left(
	\frac{(\sqrt2-1)n(n+1)}
	{M(n+\sqrt2-1)}
	\right)^{1/n}
	\right\}.
	\]
	Finally, since $h$ is increasing,
	$
	h(r)\le h(R_1)=\sqrt2,
	$ for all $
	0<r\le R_1.
	$
	Hence
	$
	c_1(r)\le\sqrt2-1,
	$
	so that
	$
	\D(1;c_1(r))
	\subseteq
	\D(1;\sqrt2-1)
	\subseteq
	\varphi_L(\mathbb{D}),
	0\le r\le R_1.
	$
	Therefore, $
	{zf'(z)}/{f(z)}
	\in
	\varphi_L(\mathbb{D}),
	$ for $
	|z|\le R_1,
	$
	which proves
	$
	R_{\mathcal{ST}_{L}}(\Q)
	\ge
	R_1
	$.
	
	Next, let $f\in\Q$ with $0<M\le n$. By Lemma~\ref{mainlemma}, we have
	$
	1+{zf''(z)}/{f'(z)}
	\in
	\D(1;c_2(r)),
	$
	where
	\[
	c_2(r)=\frac{nMr^n}{n-Mr^n}.
	\]
	By Lemma~\ref{limlemma},
	$
	\D(1,\sqrt2-1)\subseteq\varphi_L(\mathbb{D}).
	$
	It suffices to determine the largest radius $r$ for which
	$
	\D(1;c_2(r))
	\subseteq
	\D(1;\sqrt2-1).
	$
	Let
	\[
	g(r)
	=
	1+c_2(r)
	=
	1+\frac{nMr^n}{n-Mr^n}.
	\]
	Thus, $g(r)$ denotes the right endpoint of the disk estimate on the real axis.
	A direct computation gives
	\[
	g'(r)
	=
	\frac{Mn^3r^{\,n-1}}
	{\left(n-Mr^n\right)^2}
	>0,
	\qquad
	0<r<1.
	\]
	Hence $g$ is strictly increasing on $(0,1)$, meaning that the disk
	$\D(1;c_2(r))$ expands and its right endpoint moves monotonically to the right
	as $r$ increases.
	The largest admissible radius is determined when $g(r)$ reaches the
	rightmost boundary point $\sqrt2$.
	Thus we solve
	$
	g(r)=\sqrt2,
	$
	or equivalently,
	$
	c_2(r)=\sqrt2-1.
	$
	This reduces to
	\[
	\Psi(r)
	:=
	Mr^n(n+\sqrt2-1)
	-
	(\sqrt2-1)n.
	\]
	The location of the solution depends on $M$, leading to two cases.
	
	\medskip
	
	\noindent\textbf{Case (i).}
	Suppose that
	\[
	\frac{(\sqrt2-1)n}
	{n+\sqrt2-1}
	<
	M
	\le
	n.
	\]
	Then
	\[
	\Psi(0)
	=
	-(\sqrt2-1)n
	<0,
	\]
	whereas
	\[
	\Psi(1)
	=
	M(n+\sqrt2-1)
	-
	(\sqrt2-1)n
	>0.
	\]
	Since $\Psi$ is continuous and strictly increasing on $(0,1)$, it has a unique
	zero in $(0,1)$, namely
	\[
	R_2
	=
	\left(
	\frac{(\sqrt2-1)n}
	{M(n+\sqrt2-1)}
	\right)^{1/n}.
	\]
	
	\medskip
	
	\noindent\textbf{Case (ii).}
	Suppose that
	\[
	0<M
	\le
	\frac{(\sqrt2-1)n}
	{n+\sqrt2-1}.
	\]
	Then
	\[
	\Psi(1)
	=
	M(n+\sqrt2-1)
	-
	(\sqrt2-1)n
	\le0.
	\]
	Since $\Psi$ is increasing on $(0,1)$,
	$
	\Psi(r)\le0,
		$ for all $
	0<r<1.
	$
	Equivalently,
	\[
	c_2(r)
	=
	\frac{nMr^n}
	{n-Mr^n}
	\le
	\sqrt2-1.
	\]
	Hence
	$
	\D(1;c_2(r))
	\subseteq
	\D(1;\sqrt2-1),
	$ for all $
	0<r<1,
	$
	and therefore
	$
	R_2=1.
	$
 	Combining the two cases,
	\[
	R_2
	=
	\min\left\{
	1,
	\left(
	\frac{(\sqrt2-1)n}
	{M(n+\sqrt2-1)}
	\right)^{1/n}
	\right\}.
	\]
	Finally, since $g$ is increasing,
	$
	g(r)\le g(R_2)=\sqrt2,
	$ for all $
	0<r\le R_2.
	$
	Hence
	$
	c_2(r)\le\sqrt2-1,
	$
	so that
	$
	\D(1;c_2(r))
	\subseteq
	\D(1;\sqrt2-1)
	\subseteq
	\varphi_L(\mathbb{D}),
	0\le r\le R_2.
	$
	Therefore,
	$
	1+{zf''(z)}/{f'(z)}
	\in
	\varphi_L(\mathbb{D}),
	$ for $
	|z|\le R_2,
	$
	which proves
	$
	R_{\mathcal {CV}_{L}}(\Q)
	\ge
	R_2
	$.
	\end{proof}
		\begin{remark}
		For the particular class $\mathcal{Q}_1(1)$, corresponding to $n=1$ and $M=1$, the
		radius formulas in Theorem~\ref{thm:lemniscate} reduce to
		\[
		R_{\mathcal {ST}_{L}}(\mathcal{Q}_1(1))
		= \sqrt2(\sqrt2-1)
		\approx
		0.585786,
		\]
		and
		\[
		R_{\mathcal {CV}_{L}}(\mathcal{Q}_1(1))
		=1-\frac{1}{\sqrt2}
		\approx
		0.292893.
		\]
		\end{remark}
	The obtained radius estimate is only a  lower bound.

\bigskip
%
%

\noindent
\textbf{Acknowledgement}: The first author is supported by a HTRA fellowship from NIT Tiruchirappalli.


\begin{thebibliography}{00}
	\bibitem{MR4457452}
	K. Arora and S.~S. Kumar, Starlike functions associated with a petal shaped domain, Bull. Korean Math. Soc. {\bf 59} (2022), no.~4, 993--1010.
	
	\bibitem{MR3913990}
	N.~E. Cho, V. Kumar, S. S. Kumar, and V. Ravichandran, Radius problems for starlike functions associated with the sine function, Bull. Iranian Math. Soc. {\bf 45} (2019), no.~1, 213--232.
	
	\bibitem{MR4386722}
	N.~E. Cho, A. Swaminathan and L.~A. Wani, Radius constants for functions associated with a limacon domain, J. Korean Math. Soc. {\bf 59} (2022), no.~2, 353--365.
	
	\bibitem{MR3718233}
	S. Gandhi and V. Ravichandran, Starlike functions associated with a lune, Asian-Eur. J. Math. {\bf 10} (2017), no.~4, 1750064, 12 pp.
	
	\bibitem{MR4099174}
	S. Gandhi, Radius estimates for three leaf function and convex combination of starlike functions, in {\it Mathematical analysis. I. Approximation theory}, 173--184, Springer Proc. Math. Stat., 306, Springer, Singapore.
	
	\bibitem{MR4044913}
	P. Goel and S.~S. Kumar, Certain class of starlike functions associated with modified sigmoid function, Bull. Malays. Math. Sci. Soc. {\bf 43} (2020), no.~1, 957--991.
	
	\bibitem{MR4303808}
	P. Gupta, S. Nagpal and V. Ravichandran, Inclusion relations and radius problems for a subclass of starlike functions, J. Korean Math. Soc. {\bf 58} (2021), no.~5, 1147--1180.
	
	\bibitem{MR4478088}
	R. Kanaga and V. Ravichandran, Radius of Lima\c con starlikeness for Janowski starlike functions, Asian-Eur. J. Math. {\bf 15} (2022), no.~9, Paper No. 2250164, 13 pp.
	
	\bibitem{MR3902942}
	K. Khatter, V. Ravichandran and S.~S. Kumar, Starlike functions associated with exponential function and the lemniscate of Bernoulli, Rev. R. Acad. Cienc. Exactas F\'is. Nat. Ser. A Mat. RACSAM {\bf 113} (2019), no.~1, 233--253.	
	
	\bibitem{mainpaper}
	U. Korzybska, K. Kowalczyk and P. W\'ojcik, On a convex class of univalent functions, Ann. Univ. Mariae Curie-Sk\l odowska Sect. A {\bf 60} (2006), 31--38.

	\bibitem{MR4216363}
	S.~S. Kumar and G. Kamaljeet, A cardioid domain and starlike functions, Anal. Math. Phys. {\bf 11} (2021), no.~2, Paper No. 54, 34 pp. 	

	\bibitem{MR3496681}
	S. Kumar and V. Ravichandran, A subclass of starlike functions associated with a rational function, Southeast Asian Bull. Math. {\bf 40} (2016), no.~2, 199--212.
	
	\bibitem{MR4314116}
	A. Lecko, V. Ravichandran and A. Sebastian, Starlikeness of certain non-univalent functions, Anal. Math. Phys. {\bf 11} (2021), no.~4, Paper No. 163, 23 pp.
	
		
	\bibitem{Masih}
	V. S. Masih,  and S. Kanas, Subclasses of starlike and convex functions associated with the limaçon domain,  Symmetry \textbf{12} (2020), no,~6,  Art. 942.	
	
	\bibitem{MR3394060}
	R. Mendiratta, S. Nagpal and V. Ravichandran, On a subclass of strongly starlike functions associated with exponential function, Bull. Malays. Math. Sci. Soc. {\bf 38} (2015), no.~1, 365--386.

	\bibitem{MR4620985}
	M. Mundalia and S.~S. Kumar, On a subfamily of starlike functions related to hyperbolic cosine function, J. Anal. {\bf 31} (2023), no.~3, 2043--2062.

	\bibitem{MR3419845}
	R.~K. Raina and J. Sok\'o\l, Some properties related to a certain class of starlike functions, C. R. Math. Acad. Sci. Paris {\bf 353} (2015), no.~11, 973--978.
	
	\bibitem{MR3536076}
	K. Sharma, N.~K. Jain and V. Ravichandran, Starlike functions associated with a cardioid, Afr. Mat. {\bf 27} (2016), no.~5-6, 923--939.
	
	\bibitem{MR1473947}
	J. Sok\'o\l{} and J. Stankiewicz, Radius of convexity of some subclasses of strongly starlike functions, Zeszyty Nauk. Politech. Rzeszowskiej Mat. No. 19 (1996), 101--105.

	\bibitem{MR4190740}
	L.~A. Wani and A. Swaminathan, Starlike and convex functions associated with a nephroid domain, Bull. Malays. Math. Sci. Soc. {\bf 44} (2021), no.~1, 79--104.
	

	\end{thebibliography}
\end{document}